\def\Dd{{\mathcal{D}}}
\def\Hh{{\mathcal{H}}}
\def\N{{\mathbb{N}}}
\def\R{{\mathbb{R}}}
\def\Z{{\mathbb{Z}}}
\def\H{{\mathbb{H}}}
\newcommand{\Ker}{\operatorname{Ker}}
\newcommand{\I}{\operatorname{Im}}
\newcommand{\Loc}{\mathbb{L}^\phi_{loc}}
\newtheorem{lemma}{Lemma}[section]
\newtheorem{proposition}[lemma]{Proposition}
\newtheorem{theorem}[lemma]{Theorem}
\newtheorem{corollary}[lemma]{Corollary}
\theoremstyle{definition}
\theoremstyle{remark}
\newtheorem{remark}[lemma]{Remark}
\begin{document}

\title{On Asymptotic and Continuous group Orlicz Cohomology}
\author[Ya. Kopylov and E. Sequeira]{Yaroslav Kopylov \and Emiliano Sequeira}
\address[\textsc{Yaroslav Kopylov}]{Sobolev Institute of Mathematics, 4 Acad. Koptyug Ave., Novosibirsk 630090, Russia.}
\email{yakop@math.nsc.ru; \quad yarkopylov@gmail.com}
\address[\textsc{Emiliano Sequeira}]{Universidad de la República; Montevideo, Uruguay.}
\email{esequeira@cmat.edu.uy}

\keywords{Orlicz cohomology, quasi-isometry invariance, topological group}
\subjclass[2020]{20J06, 46E30, 51F30}

\begin{abstract}
We generalize some results on asymptotic and continuous group $L^p$-cohomology to Orlicz cohomology. In particular, we show that asymptotic Orlicz cohomology is a quasi-isometry invariant and that both notions coincide in the case of a locally compact second countable group.  The case of degree $1$ is studied in more detail.    
\end{abstract}

\maketitle

\section{Introduction}

Different versions of $L^p$-cohomology (and, more generally, $L^{p,q}$-cohomology) have been studied in last decades with the aim of obtaining Lipschitz and  quasi-isometry invariants and explore the existence of inequalities of Sobolev-Poincaré type and $p$-harmonic functions. This notion is defined, for instance, for simplicial complexes \cite{BP, E, G93}, Riemannian manifolds \cite{A,GKSh82_1, GKSh82_2, Pa88}, discrete and topological groups \cite{BMV05, BR,BR2, CT, G93, MV, Puls, T} and more general metric measure spaces \cite{G,Pa88,Pa95,T}, and consists, in all cases, of a family of topological vector spaces constructed from a cochain complex of $L^p$-integrable graded functions.

As for classical $L^p$-spaces, one can generalize $L^p$-cohomology by using Orlicz spaces, which are obtained from a convex function (more precisely, a Young function) $\phi$ instead of the parameter $p$. A motivation to do this is to obtain a bigger family of quasi-isometry invariants, which can be useful, for example, for distinguishing certain spaces up to quasi-isometry, as is done in \cite{C}. 

In particular, asymptotic $L^p$-cohomology is a construction introduced by Pansu in \cite{Pa95}, following a previous version for degree $1$ \cite{Pa88}, defined for a metric measure space with bounded geometry. A quite complete study of this notion can be read in \cite{G}. Asymptotic $L^p$-cohomology provides a quasi-isometry invariant for a wide family of metric spaces, however, it has the disadvantage of being difficult to compute. 

We study the Orlicz version of this notion and prove the following result, where $L^\phi H^k_{AS}(X)$ denotes the $k$-space of asymptotic Orlicz cohomology of a metric space $X$ for a Young function $\phi$, and $L^\phi \overline{H}^k_{AS}(X)$ is the respective reduced space. For a proof in the $L^p$-case see \cite{G,Pa95}. 

\begin{theorem}\label{MainInvariance}
Let $(X,\mu)$ and $(Y,\nu)$ be two metric measure spaces with bounded geometry and $\phi$ a Young function. If there exits a quasi-isometry $F:X\to Y$, then $L^\phi H^*_{AS}(X)$ and $L^\phi H^*_{AS}(Y)$ are isomorphic (as topological vector spaces) and $L^\phi \overline{H}^*_{AS}(X)$ and $L^\phi \overline{H}^*_{AS}(Y)$ are isomorphic (as Fr\'echet spaces).
\end{theorem}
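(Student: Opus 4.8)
The plan is to follow the proof of the $L^p$ case in \cite{G,Pa95}, the only genuinely new point being that all norm estimates must be carried out for the Luxemburg norm of an arbitrary Young function $\phi$. Recall that $L^\phi H^*_{AS}(X)$ is the cohomology of a complex of Fr\'echet spaces obtained as a projective limit, over the scale $s>0$, of the simplicial cochain complexes $C^*_s(X)$ of Orlicz functions on the $(k+1)$-tuples of $X$ of diameter $\le s$, the transition maps being restriction to a smaller scale; $L^\phi\overline H^*_{AS}(X)$ is the associated reduced cohomology.

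First I would build the pullback. If $F\colon X\to Y$ is a $(\lambda,c)$-quasi-isometry it carries a tuple of diameter $\le s$ to one of diameter $\le\lambda s+c$, so $F^{\#}g(x_0,\dots,x_k):=g(Fx_0,\dots,Fx_k)$ is a cochain map $C^*_{\lambda s+c}(Y)\to C^*_s(X)$ commuting with restrictions; since the scales $\lambda s+c$ are cofinal, these assemble into a morphism $F^{\#}$ of the Fr\'echet complexes, inducing $F^*$ on $L^\phi H^*_{AS}$ and on $L^\phi\overline H^*_{AS}$. The key, and main, step is that $F^{\#}$ is \emph{bounded}. Bounded geometry together with the quasi-isometry property make $F$ uniformly proper, so that the pushforward under $(x_0,\dots,x_k)\mapsto(Fx_0,\dots,Fx_k)$ of the product measure on the diameter-$\le s$ region of $X$ has density bounded by a constant $N=N(s,\lambda,c)$ with respect to the product measure on the diameter-$\le\lambda s+c$ region of $Y$ (heuristically, $F$ has uniformly bounded fibers). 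Hence $\int\phi(\abs{F^{\#}g}/t)\le N\int\phi(\abs{g}/t)$ for all $t>0$, and since convexity of $\phi$ and $\phi(0)=0$ give $\phi(u/N)\le\phi(u)/N$ for $N\ge1$, this upgrades to $\norm{F^{\#}g}_\phi\le N\norm{g}_\phi$; in particular no $\Delta_2$-condition is needed, and $F^{\#}$, hence $F^*$, is continuous.

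Next I would pass to a quasi-inverse $G\colon Y\to X$; then $G^{\#}\circ F^{\#}=(F\circ G)^{\#}$ and $F^{\#}\circ G^{\#}=(G\circ F)^{\#}$ are pullbacks by self-maps lying at bounded distance from the identities, so it suffices to show that for $h\colon X\to X$ with $\sup_{x}d(x,h(x))\le R$ the chain map $h^{\#}$ is homotopic to the identity through a \emph{continuous} homotopy. I would use the usual simplicial prism operator
\[
Hg(x_0,\dots,x_{k-1})=\sum_{i=0}^{k-1}(-1)^{i}\,g\bigl(x_0,\dots,x_i,h(x_i),h(x_{i+1}),\dots,h(x_{k-1})\bigr),
\]
for which a direct computation gives $\delta H+H\delta=h^{\#}-\mathrm{id}$. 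If the source tuple has diameter $\le s$, every tuple appearing on the right has diameter $\le s+2R$, so $H$ carries scale-$(s+2R)$ cochains to scale-$s$ cochains; each summand is a pullback along a map with uniformly bounded fibers, so the estimate of the previous paragraph again yields boundedness, and passing to the limit $H$ is continuous. Consequently $F^*$ and $G^*$ are mutually inverse and, being continuous, are topological isomorphisms $L^\phi H^*_{AS}(X)\cong L^\phi H^*_{AS}(Y)$. Passing to reduced cohomology --- a Hausdorff quotient of the Fr\'echet cochain spaces by a closed subspace, hence itself Fr\'echet --- gives the asserted isomorphism of Fr\'echet spaces.

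The step I expect to be the main obstacle is the boundedness claim and its reuse inside the homotopy argument: turning the combinatorial ``uniformly bounded fibers'' consequence of bounded geometry into honest Luxemburg-norm bounds valid for \emph{every} Young function, and checking that these bounds are uniform enough in the scale parameter for the pullback and the prism homotopy to descend to the Fr\'echet projective limits. Once this is secured, the remaining work is the same scale bookkeeping and the standard simplicial identity for $H$ as in the $L^p$ case.
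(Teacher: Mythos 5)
The core of your argument does not go through as stated. Your pullback $F^{\#}g=g\circ(F\times\cdots\times F)$ and your prism operator both evaluate cochains at specific image points, but the elements of $AS^k_\phi$ are only classes of measurable functions modulo null sets, and a quasi-isometry carries no regularity: it need not be measurable, and even when it is, the pushforward of $\mu^{k+1}$ under $F\times\cdots\times F$ can be singular with respect to $\nu^{k+1}$, so your claimed bounded density $N(s,\lambda,c)$ simply does not exist. Already $F=\lfloor\,\cdot\,\rfloor:\R\to\R$ (a quasi-isometry at bounded distance from the identity) pushes Lebesgue measure to a purely atomic measure; composing with this $F$ is not well defined on a.e.-classes (the values of $g$ on the null set $\Z$ determine $g\circ F$ everywhere), and the key estimate $\int\phi(|F^{\#}g|/t)\le N\int\phi(|g|/t)$ fails. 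The ``uniformly bounded fibers'' heuristic is valid only in the uniformly discrete, counting-measure setting, whereas the theorem concerns arbitrary metric measure spaces with bounded geometry. The same defect infects your homotopy $H$, which evaluates $g$ at the points $h(x_i)$.

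The paper's proof avoids this by smoothing every operator with kernels. One fixes bounded kernels $\kappa_X,\kappa_Y$ with uniformly bounded support width and $\int\kappa(x,x')\,dx'=1$, and defines $F^*u(\Delta_X)=\int_{Y^{k+1}}u(\Delta_Y)\kappa_Y(F\Delta_X,\Delta_Y)\,d\Delta_Y$; Jensen's inequality then yields a modular bound $\rho_{\phi,s}(F^*u)\le C\,\rho_{\phi,s'}(u)$, with $s'$ roughly $\lambda s+\epsilon+2K$ and $C$ depending only on the kernel, the quasi-isometry constants and $V$, valid for every Young function (so your observation that no doubling condition is needed survives, but in this smoothed form). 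Likewise the homotopy is not the pointwise prism operator but its kernel average $B_ku(\Delta)=\int_{X^{k+1}}u\bigl(b(\Delta,\Delta')\bigr)\kappa(\Delta,\Delta')\,d\Delta'$, where $\kappa$ is the composite kernel of Lemma~\ref{lemaKernel} built from $\kappa_X$, $\kappa_Y$, $F$ and $\overline F$; the identity $B_{k+1}\circ d_{k+1}+d_k\circ B_k=F^*\circ\overline{F}^*-\mathrm{Id}$ is then checked directly using Lemma~\ref{lemaGenton}, and quasi-inverses give the isomorphism in both plain and reduced cohomology. So the missing idea is precisely this kernel-smoothed pullback and averaged prism operator; your scale bookkeeping is correct and reappears in the paper's estimates, but it cannot rescue the unsmoothed operators you propose.
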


Recent articles \cite{BR,BR2} by Bourdon and Rémy study the continuous group $L^p$-cohomology, following some previous ideas given in \cite{CT,E2,G93}, which is defined for locally compact groups. They prove an equivalence theorem between continuous group $L^p$-cohomology and asymptotic $L^p$-cohomology, which allows to conclude that the first one is a quasi-isometry invariant, and make some computations for Lie groups. 

We present an Orlicz version of the main result in \cite{BR}, which was earlier proved for the $L^2$ case in \cite{SS}. Here $H^k_{\mathrm{ct}}\bigl(G,L^\phi(G)\bigr)$ is the $k$-space of continuous group cohomology of $G$ with coefficients in $L^\phi(G)$, and $\overline{H}^k_{\mathrm{ct}}\bigl(G,L^\phi(G)\bigr)$ is the corresponding reduced cohomology space.

\begin{theorem}\label{MainEquivalence}
Suppose that $G$ is a~locally compact second countable group equipped with a~left-invariant proper metric and a left-invariant Haar measure and $\phi$ is a doubling Young function. Then the topological vector spaces $H^k_{\mathrm{ct}}\bigl(G,L^\phi(G)\bigr)$ and $L^\phi H^k_{AS}(G)$ are isomorphic for every $k\in\N$ and so are the Fr\'echet spaces $\overline{H}^k_{\mathrm{ct}}\bigl(G,L^\phi(G)\bigr)$ and $L^\phi \overline{H}^k_{AS}(G)$.
\end{theorem}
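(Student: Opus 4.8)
The strategy is to build a chain of isomorphisms connecting continuous group cohomology $H^*_{\mathrm{ct}}(G,L^\phi(G))$ to asymptotic Orlicz cohomology $L^\phi H^*_{AS}(G)$ through an intermediate \emph{simplicial} model. First I would recall that continuous group cohomology of $G$ with coefficients in a topological $G$-module $V$ can be computed by the homogeneous bar resolution: the complex $C^k_{\mathrm{ct}}(G,V)$ of continuous functions $G^{k+1}\to V$ that are $G$-equivariant, with the usual simplicial differential. For $V=L^\phi(G)$ this identifies $C^k_{\mathrm{ct}}(G,L^\phi(G))$, after the standard ``untwisting'' isomorphism that absorbs the diagonal $G$-action, with a space of $\phi$-integrable functions on $G^{k+2}$ that are equivariant and satisfy an integrability condition only in the last variable — essentially $L^\phi$ sections of functions on the orbit space $G^{k+1}/G \cong G^k$. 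The key analytic input here is that $\phi$ is doubling: this guarantees that $L^\phi(G)$ is separable, that continuous compactly supported functions are dense, that the translation action of $G$ on $L^\phi(G)$ is continuous, and that the relevant mixed-norm Orlicz spaces behave well (Fubini-type arguments, density of product functions). Without the doubling ($\Delta_2$) condition none of this is guaranteed, and indeed this is the place where that hypothesis is indispensable.

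Next I would set up the asymptotic side. By definition $L^\phi C^k_{AS}(G)$ consists of (germs at ``scale $s\to\infty$'' of) functions on tuples of $(k+1)$ points of $G$ that are $s$-close together, with $\phi$-integrability with respect to the product measure; the cohomology is the direct limit over the scale. The plan is to exhibit, for each fixed large scale $s$, a cochain map between the bar-type complex computing $H^*_{\mathrm{ct}}(G,L^\phi(G))$ and the scale-$s$ asymptotic complex, and to show these maps induce mutually inverse maps in cohomology after passing to the limit in $s$. Concretely: restriction of an equivariant cochain on $G^{k+1}$ to tuples that are pairwise $s$-close gives a map to the asymptotic complex; for the reverse direction one uses a measurable ``spreading'' / averaging operator (an Orlicz analogue of the smoothing operators in \cite{G, BR, SS}) built from a bounded Borel partition of $G$ into bounded pieces — available because $G$ is locally compact second countable with a proper metric, hence coarsely equivalent to a net. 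One then checks that these two operators are chain maps, that one composite is the identity on the nose, and that the other is chain-homotopic to the identity via an explicit prism operator, all estimates being uniform in $s$ thanks again to the doubling of $\phi$ (so that convexity/Jensen estimates for $\phi$ replace the Hölder and $L^p$-triangle inequalities used in the classical proof). Continuity of all maps for the natural (semi)norm topologies must be tracked throughout so that the resulting bijections are isomorphisms of topological vector spaces, and restrict to isomorphisms of the Hausdorff quotients, i.e.\ of the reduced cohomologies as Fréchet spaces.

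The main obstacle I anticipate is the absence of duality and of a scale of exponents in the Orlicz setting: several steps in the $L^p$ proofs of \cite{BR, G} use Hölder's inequality with a conjugate exponent, the explicit form $\|f\|_p^p = \int |f|^p$, and interpolation between $L^p$ spaces, none of which have literal Orlicz counterparts. The remedy is to systematically replace these by (i) the modular/norm comparison valid under $\Delta_2$, (ii) Jensen's inequality for the convex function $\phi$ to handle the averaging operators, and (iii) the generalized Hölder inequality for the pair $(\phi,\tilde\phi)$ only where a genuine pairing is unavoidable, while arranging the homotopy operators so that they act by averaging (which is $\phi$-norm non-expanding up to a doubling constant) rather than by integration against an $L^{\tilde\phi}$ kernel. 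A secondary technical point is verifying that the ``untwisting'' isomorphism and the passage to the scale limit are compatible with the topologies; this is routine but must be done carefully since the direct limit of Fréchet spaces is taken in the category of topological vector spaces and one needs the limit maps to remain continuous with continuous inverses on cohomology. Once these analytic replacements are in place, the homological bookkeeping follows the template of \cite{BR, SS} essentially verbatim.
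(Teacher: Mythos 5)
Your plan has two genuine gaps, and they sit exactly where the real work of the theorem lies. First, the comparison map you propose is not well defined. An invariant cochain in the bar complex is a continuous map $\omega:G^{k+1}\to L^\phi(G)$, i.e.\ valued in classes of functions, so the ``untwisting'' to a scalar function on $G^{k+1}$ (and hence the ``restriction to $s$-close tuples'') would require evaluating $L^\phi$-classes at points, which is meaningless; and even granting a jointly measurable model, continuity for the compact-open topology gives no $\phi$-integrability over $X_s^{k+1}$ with respect to the product Haar measure, so it is not clear your restriction lands in the asymptotic complex at all. The paper circumvents precisely this: it never compares the bar complex to $AS^*_\phi(G)$ directly. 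Instead it introduces the modules $\mathbb{L}^\phi_{loc}(G^{k+1},G)$ of jointly measurable functions, proves that $0\to L^\phi(G)\to \mathbb{L}^\phi_{loc}(G^{*+1},G)$ is a relatively injective strong $G$-resolution (relative injectivity by averaging against a compactly supported $\chi$, strength by an explicit contracting homotopy $\sigma_k$), invokes the uniqueness of relatively injective strong resolutions (Proposition~\ref{PropEq}) to relate it to the bar resolution, and then identifies $\mathbb{L}^\phi_{loc}(G^{k+1},G)^G$ with $AS^k_\phi(G)$ via $\Lambda(u)(x_0,\dots,x_k,g)=u(gx_0,\dots,gx_k)$; surjectivity of $\Lambda$ needs Zimmer's theorem to replace an a.e.-equivariant function by a strictly equivariant one. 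None of these ingredients (the auxiliary resolution, the relative-injectivity construction, the measurable-selection step) appear in your outline, and the explicit ``spreading'' and ``prism'' operators you promise as a substitute are not constructed, so the homotopy equivalence is asserted rather than obtained.

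Second, your description of the asymptotic side is structurally wrong: $AS^k_\phi(G)$ is the \emph{inverse} (projective) limit of the spaces $L^\phi\bigl(X_s^{k+1}\bigr)$, a Fr\'echet space for the family of seminorms $\|\cdot\|_{\phi,s}$, not a direct limit over scales, and the cohomology is not a colimit in $s$. Consequently the ``pass to the limit in $s$'' step and the remark about direct limits of Fr\'echet spaces do not apply to the object in question, and the topological bookkeeping needed for the isomorphisms of $H^k$ and of the reduced Fr\'echet spaces $\overline{H}^k$ would have to be redone for the projective-limit topology. Your analytic remarks (replacing $L^p$ tricks by Jensen plus the $\Delta_2$ modular--norm comparison, continuity of the regular representation) are correct and are indeed used in the paper, but they are the routine part; the missing homological and measurable-structure steps are the core of the proof.
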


The doubling condition on $\phi$ is an assumption about its behaviour on $0$ and $\infty$ that will be specified later.  The existence of a left-invariant proper metric compatible with the topology of $G$ is guaranteed by Struble's theorem (see \cite[Theorem 2.B.4]{CH}).

Combining Theorems~\ref{MainInvariance} and~\ref{MainEquivalence}, we obtain the following result:

\begin{corollary}
If $F:G_1\to G_2$ is a quasi-isometry between two groups as in Theorem \ref{MainEquivalence} and $\phi$ is a doubling Young function, then $H^k\bigl(G_1,L^\phi(G_1)\bigr)$ is isomorphic to $H^k\bigl(G_2,L^\phi(G_2)\bigr)$ for every $k\in\N$. The same holds for the reduced cohomology.
\end{corollary}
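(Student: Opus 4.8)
The plan is to simply concatenate Theorems~\ref{MainInvariance} and~\ref{MainEquivalence}; the only point requiring genuine (if routine) verification is that the groups $G_1$ and $G_2$ fit the hypotheses of Theorem~\ref{MainInvariance}, i.e.\ that they are metric measure spaces with bounded geometry.

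First I would record that a locally compact second countable group $G$ equipped with a left-invariant proper metric $d$ and a left-invariant Haar measure $\mu$ has bounded geometry. Properness of $d$ forces closed balls to be compact, so $\mu(B(x,r))<\infty$ for all $x\in G$ and $r>0$; left-invariance of both $d$ and $\mu$ then gives $\mu(B(x,r))=\mu(B(e,r))$, a quantity which is finite and strictly positive since Haar measure is positive on nonempty open sets. Hence the measure of an $r$-ball is bounded above and below by constants depending only on $r$, which is exactly what bounded geometry demands (in whatever precise form the paper adopts — existence of a uniformly discrete net, a quasi-lattice, or a doubling estimate for $\mu$ at a fixed scale — all of these follow from this left-invariance computation). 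The existence of such a metric is guaranteed by Struble's theorem, as already noted in the introduction.

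Next I would apply Theorem~\ref{MainEquivalence} to $G_1$ and to $G_2$ separately: for each $i\in\{1,2\}$ and each $k\in\N$ this yields an isomorphism of topological vector spaces $H^k\bigl(G_i,L^\phi(G_i)\bigr)\cong L^\phi H^k_{AS}(G_i)$ and an isomorphism of Fr\'echet spaces $\overline{H}^k\bigl(G_i,L^\phi(G_i)\bigr)\cong L^\phi \overline{H}^k_{AS}(G_i)$. Then I would invoke Theorem~\ref{MainInvariance} for the quasi-isometry $F:G_1\to G_2$ between the bounded-geometry spaces $(G_1,\mu_1)$ and $(G_2,\mu_2)$, obtaining graded isomorphisms $L^\phi H^*_{AS}(G_1)\cong L^\phi H^*_{AS}(G_2)$ of topological vector spaces and $L^\phi \overline{H}^*_{AS}(G_1)\cong L^\phi \overline{H}^*_{AS}(G_2)$ of Fr\'echet spaces. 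Composing these three isomorphisms degree by degree gives $H^k\bigl(G_1,L^\phi(G_1)\bigr)\cong H^k\bigl(G_2,L^\phi(G_2)\bigr)$ and the analogous statement for reduced cohomology, as claimed.

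I do not expect any serious obstacle here: once bounded geometry is in place the argument is purely formal, being a composition of already-established isomorphisms. The only mildly delicate step is to make sure that the exact notion of bounded geometry used in the statement of Theorem~\ref{MainInvariance} is indeed the one checked for $(G,d,\mu)$; but, as indicated above, every standard variant of that notion is an immediate consequence of left-invariance of the metric and the Haar measure together with properness.
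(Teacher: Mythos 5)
Your proposal is correct and follows essentially the same route as the paper, which obtains the corollary precisely by composing the isomorphisms of Theorem~\ref{MainEquivalence} (applied to each $G_i$) with that of Theorem~\ref{MainInvariance}. Your explicit check that a left-invariant proper metric together with a left-invariant Haar measure yields bounded geometry (since $\mu(B(x,r))=\mu(B(e,r))$ is finite and positive) is a point the paper leaves implicit, and it is verified correctly.
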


For the case of degree $1$, we generalize some results given in \cite{MV,Puls,T} for $L^p$-cohomology and in \cite{KP2} for Orlicz cohomology in the case of discrete groups. In particular, we prove that if $G$ is compactly generated and $\phi$ satisfies some conditions, then every class in $H^1\bigl(G,L^\phi(G)\bigr)$ is represented by one (and only one) $\phi$-harmonic function.

Finally, we show with an example that some properties of Orlicz cohomology fail to hold if the Young function is not doubling. In particular, it is known that, if $\phi$ is doubling, then
\begin{itemize}
    \item the Orlicz cohomology in degree 1 of a uniformly contractible Gromov-hyperbolic   simplicial complex with bounded geometry whose boundary admits an Ahlfors-regular visual metric is reduced, that is, it coincides with its reduced Orlicz cohomology (see \cite{C});
    \item the continuous Orlicz cohomology in degree 1 of a non-amenable non-compact second countable locally compact group is reduced (see \cite{Ko13}). Therefore, its asymptotic Orlicz cohomology is reduced.
\end{itemize}
We prove that in both cases, the doubling condition is necessary.


\section*{Acknowledgments} 

The~work of~Ya.~Kopylov was carried out in the framework 
of the State Task to~the~Sobolev Institute of Mathematics (Project FWNF—2022—0006). E.~Sequeira was supported by  the \textit{Mathematical Center in Akademgorodok} under the agreement No. 075-15-2019-1675 with the \textit{Ministry of Science and Higher Education} of the Russian Federation.

We thank Pierre Pansu, Michael Puls, and Romain Tessera for~useful comments on~their works concerning the~$L^p$-case and Rafael Potrie for a valuable suggestion to improve the text.

This work is inspired by previous works by Pierre Pansu, Luc Genton, Marc Bourdon and Betrand Rémy.

\section{Preliminaries}

\subsection{Quasi-isometries}

Consider two metric spaces $X$ and $Y$, where the metric in both cases is denoted by $|\cdot-\cdot|$. A function $F:X\to Y$ is a \textit{quasi-isometry} if there exist two constants $\lambda \geq 1$ and $\epsilon\geq 0$ such that
\begin{enumerate}
\item[(a)] for every $x,x'\in X$, 
$$\lambda^{-1}|x-x'|-\epsilon\leq |F(x)-F(x')|\leq \lambda|x-x'|+\epsilon;$$
\item[(b)] for every $y\in Y$ there exists $x\in X$ such that $|F(x)-y|\leq\epsilon$.
\end{enumerate}
Notice that (a) is a coarse version of the bi-Lipschitz condition, while (b) expresses a kind of surjectivity.  

The notion of quasi-isometry defines an equivalence relation among metric spaces. Indeed, the composition of quasi-isometries is a quasi-isometry and for every quasi-isometry $F:X\to Y$ there exists a quasi-isometry $\overline{F}:Y\to X$ such that $F\circ \overline{F}$ and $\overline{F}\circ F$ are at bounded uniform distance from the identity. In this case, we say that $\overline{F}$ is a \textit{quasi-inverse} of $F$. Observe that the quasi-inverse is not uniquely defined, but one can easily show that two quasi-inverses of the same quasi-isometry are at bounded uniform distance from each other.

We refer to \cite{GH} for more details. 

\subsection{Orlicz spaces}

By a \textit{Young function} we mean a non-negative function $\phi:\R\to [0,+\infty)$ that is convex and even and satisfies $\phi(t)=0$ if and only if $t=0$. We say that $\phi$ is a $N$-function if it is in addition continuous and satisfies 
$$\lim_{t\to 0} \frac{\phi(t)}{t}=0\text{ and }\lim_{t\to+\infty} \frac{\phi(t)}{t}=+\infty.$$

If $(Z,\mu)$ is a measure space and $f:Z\to\R$ is a measurable function, we define
\begin{equation}\label{modular}
\rho_\phi(f)=\int_Z \phi\bigl(f(x)\bigr)\,d\mu(x).    
\end{equation}
The Orlicz space of $(Z,\mu)$ associated to $\phi$ is the space $L^\phi(Z)=L^\phi(Z,\mu)$ of classes of functions $f:Z\to\R$ such that $\rho_\phi(f/\alpha)<+\infty$ for some constant $\alpha>0$, equipped with the Luxembourg norm
$$\|f\|_\phi=\inf\left\{\alpha>0 : \rho_\phi\left(\frac{f}{\alpha}\right)\leq 1\right\}.$$
The space $\bigl(L^\phi(Z),\|\ \|_\phi\bigr)$ is a Banach space and, as in the $L^p$-case, the convexity of $\phi$ implies that $L^\phi(Z)\subset L^1_{loc}(Z)$. 

\begin{remark}\label{equivNormas}
Observe that if $\lambda\geq 1$ and $f$ is a measurable function on $Z$, then $\|f\|_\phi\leq \|f\|_{\lambda\phi}$. Moreover, the convexity of $\phi$ implies that $\phi(t/\lambda)\leq \phi(t)/\lambda$ for every $t\in\R$; thus, if $\alpha>\|f\|_\phi$, then
$$\int_Z \lambda\phi\left(\frac{f}{\lambda\alpha}\right) d\mu\leq \int_{Z}\phi\left(\frac{f}{\alpha}\right)d\mu\leq 1,$$
which implies $\|f\|_{\lambda\phi}\leq \lambda\|f\|_{\phi}$. We conclude that for any $\lambda>0$,
$$C^{-1}\|\ \|_\phi\leq \|\ \|_{\lambda\phi}\leq C\|\ \|_\phi,$$
where $C=\max\{\lambda,\lambda^{-1}\}$.

A consequence of this fact is that, if $\rho_\phi(f)\leq \lambda\rho_\phi(\lambda' g)=\rho_{\lambda\phi}(\lambda'g)$ for $\lambda,\lambda'>0$, then $\|f\|_\phi\leq C\lambda'\|g\|_\phi$.
\end{remark}






If $\phi$ is a Young function one can consider its convex conjugate
$$\psi:\R\to[0,+\infty],\ \psi(s)=\sup\{t|s|-\phi(t):t\geq 0\}.$$
It is easy to see that if $\phi$ is an $N$-function, then $\psi$ is also an $N$-function. A general version of Hölder's inequality holds for a pair of conjugate $N$-functions $(\phi,\psi)$: 
\begin{equation}\label{Holder}
\int_Z |fg|\,d\mu\leq 2\|f\|_{L^\phi}\|g\|_{L^\psi}    \end{equation}
for every $f\in L^\phi(Z)$ and $g\in L^\psi(Z)$. It is obtained by using Young's inequality:
\begin{equation}\label{YoungInequality}
ts\leq \psi(t)+\phi(s)\ \forall t,s\in\R.    
\end{equation}
We refer to \cite[Section 3.3]{RR} for a proof of \eqref{Holder}.

A Young function $\phi$ is \textit{doubling} if there exists a constant $D\geq 2$ such that for every
$t\geq 0$,
\begin{equation}\label{ConstDoblante}
\phi(2t)\leq D\phi(t).    
\end{equation}
(Observe that, since $\phi$ is convex and $\phi(0)=0$, then $\phi(2t)\geq 2\phi(t)$ for every $t\geq 0$.) It is not difficult to prove that $\phi$ is doubling if and only if there exists an increasing function $D_1:[1,+\infty)\to [1,\infty)$ such that for every $t\geq 0$ and $s\geq 1$,
$$\phi(st)\leq D_1(s)\phi(t).$$


The following proposition is known and easy to prove. A short proof can be found in \cite[Lemma 2.5.4]{Seq}. 

\begin{proposition}\label{PropDoblante}
Let $\phi$ be a doubling Young function, then
\begin{enumerate}
\item[(i)] $f\in L^\phi(Z,\mu)$ if and only if $\rho_\phi(f)<+\infty$.
\item[(ii)] $f_n\to f$ in $L^\phi(Z,\mu)$ if and only if $\rho_\phi(f_n-f) \to 0$. 
\end{enumerate}
\end{proposition}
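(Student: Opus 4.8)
The plan is to prove both statements of Proposition~\ref{PropDoblante} by exploiting the fact that the doubling condition lets us compare the modular $\rho_\phi$ at different scales. The essential tool is the observation, already recorded in the comment to \eqref{ConstDoblante}, that doubling is equivalent to the existence of an increasing function $D_1\colon[1,\infty)\to[1,\infty)$ with $\phi(st)\le D_1(s)\phi(t)$ for all $t\ge 0$ and $s\ge 1$; iterating $\phi(2t)\le D\phi(t)$ gives $D_1(s)\le D^{\lceil\log_2 s\rceil}$.

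For part~(i), one direction is trivial and holds for any Young function: if $\rho_\phi(f)<+\infty$ then $\rho_\phi(f/1)<+\infty$, so $f\in L^\phi(Z,\mu)$. For the converse, suppose $f\in L^\phi(Z,\mu)$, so there is $\alpha>0$ with $\rho_\phi(f/\alpha)<+\infty$. If $\alpha\le 1$ then by convexity $\phi(f(x))\le\phi(f(x)/\alpha)$ (using $\phi(\alpha\cdot)\le\alpha\phi(\cdot)$ for $\alpha\le 1$ and rescaling), so $\rho_\phi(f)\le\rho_\phi(f/\alpha)<+\infty$. If $\alpha>1$, write $g=f/\alpha$; then $\phi(f(x))=\phi(\alpha g(x))\le D_1(\alpha)\phi(g(x))$, hence $\rho_\phi(f)\le D_1(\alpha)\rho_\phi(f/\alpha)<+\infty$. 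Either way $\rho_\phi(f)<+\infty$.

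For part~(ii), the ``only if'' direction again needs only the doubling hypothesis in a mild way combined with the definition of the Luxembourg norm: if $\|f_n-f\|_\phi\to 0$, then for $n$ large $\|f_n-f\|_\phi<1$, so $\rho_\phi(f_n-f)\le\|f_n-f\|_\phi$ by the standard modular inequality $\rho_\phi(h)\le\|h\|_\phi$ valid when $\|h\|_\phi\le 1$ (a consequence of convexity and Fatou); letting $n\to\infty$ gives $\rho_\phi(f_n-f)\to 0$ — and in fact this direction holds for every Young function. For the ``if'' direction, assume $\rho_\phi(f_n-f)\to 0$ and fix $\eps\in(0,1)$. Using $\phi((f_n-f)/\eps)=\phi(\eps^{-1}(f_n-f))\le D_1(\eps^{-1})\phi(f_n-f)$ pointwise, integrate to get $\rho_\phi((f_n-f)/\eps)\le D_1(\eps^{-1})\rho_\phi(f_n-f)$, which tends to $0$; so for $n$ large this is $\le 1$, i.e. $\|f_n-f\|_\phi\le\eps$. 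Since $\eps$ was arbitrary, $\|f_n-f\|_\phi\to 0$.

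I do not expect a serious obstacle here; the only point requiring a little care is keeping the two regimes $\alpha\le 1$ and $\alpha>1$ (resp.\ $s\ge 1$ versus the rescaling $\phi(st)\le s\phi(t)$ for $s\le 1$) straight, and invoking the elementary modular–norm inequalities $\rho_\phi(h)\le\|h\|_\phi$ when $\|h\|_\phi\le 1$ and $\rho_\phi(h/\alpha)\le 1$ when $\alpha\ge\|h\|_\phi$, both of which follow directly from the definition of $\|\cdot\|_\phi$ together with convexity of $\phi$ and the monotone/dominated convergence theorem. This is exactly the kind of short argument referenced as \cite[Lemma 2.5.4]{Seq}, so I would present it tersely.
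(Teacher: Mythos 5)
Your proof is correct, and it is exactly the standard modular-versus-norm argument (doubling via $\phi(st)\le D_1(s)\phi(t)$ for $s\ge 1$, plus the elementary inequalities $\rho_\phi(h)\le\|h\|_\phi$ when $\|h\|_\phi\le 1$ and $\rho_\phi(h/\alpha)\le 1$ when $\alpha>\|h\|_\phi$) that the paper does not reproduce but delegates to \cite[Lemma 2.5.4]{Seq}. Nothing further is needed.
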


In the same way as for $L^p$-spaces one can prove that simple functions are dense in $L^\phi(Z)$ if $\phi$ is doubling. For that it is necessary to use part $(ii)$ of Proposition \ref{PropDoblante}. This allows to prove the following fact by reproducing the corresponding proof for $L^p$-spaces.


\begin{lemma}\label{LemaCompSupp}
Suppose that $\phi$ is a doubling Young function. If $X$ is a proper metric space (i.e. every closed bounded set is compact) and $\mu$ is a Radon measure, then the space of continuous functions with compact support on $X$ is dense in $L^\phi(X,\mu)$. 
\end{lemma}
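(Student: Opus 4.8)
The plan is to adapt the standard argument that $C_c(X)$ is dense in $L^p(X,\mu)$ for a Radon measure on a locally compact space, using the doubling condition to replace the role of absolute continuity of the $L^p$-norm by Proposition~\ref{PropDoblante}(ii). Concretely, I would proceed through the following reduction steps.

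\textbf{Step 1: Reduce to simple functions.} As remarked in the excerpt, the doubling hypothesis guarantees that simple functions with finite-measure support are dense in $L^\phi(X,\mu)$ (one truncates $f$ and uses that $\rho_\phi(f-f_n)\to 0$ by dominated convergence together with Proposition~\ref{PropDoblante}(ii)). By linearity it therefore suffices to approximate $\mathbbm{1}_E$ in $L^\phi$ by continuous compactly supported functions, where $E\subseteq X$ is a measurable set with $\mu(E)<+\infty$.

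\textbf{Step 2: Approximate a finite-measure set by a compact set from inside and an open set from outside.} Since $X$ is proper and $\mu$ is Radon, $\mu$ is inner regular by compact sets and outer regular by open sets on sets of finite measure. So given $\eps>0$, pick a compact $K$ and an open $U$ with $K\subseteq E\subseteq U$ and $\mu(U\setminus K)$ as small as we like; moreover, since $X$ is proper we may shrink $U$ so that $\overline{U}$ is compact (intersect with a large closed ball around $K$). By Urysohn's lemma for locally compact Hausdorff spaces there is $g\in C_c(X)$ with $0\leq g\leq 1$, $g\equiv 1$ on $K$, $\supp g\subseteq U$. Then $|\mathbbm{1}_E-g|\leq \mathbbm{1}_{U\setminus K}$ pointwise, hence $\rho_\phi(\mathbbm{1}_E-g)\leq \rho_\phi(\mathbbm{1}_{U\setminus K})=\phi(1)\,\mu(U\setminus K)$.

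\textbf{Step 3: Convert the modular estimate into a norm estimate.} By choosing $\mu(U\setminus K)$ small enough we make $\rho_\phi(\mathbbm{1}_E-g)$ as small as desired; since $\phi$ is doubling, Proposition~\ref{PropDoblante}(ii) (applied along a sequence $g_n$ with $\rho_\phi(\mathbbm{1}_E-g_n)\to 0$) yields $\|\mathbbm{1}_E-g_n\|_\phi\to 0$. Combining with Step~1 finishes the proof: for arbitrary $f\in L^\phi(X,\mu)$ and $\eps>0$ we first pick a simple $s=\sum_{j} c_j\mathbbm{1}_{E_j}$ with $\|f-s\|_\phi<\eps/2$, then approximate each $\mathbbm{1}_{E_j}$ as above to get $g\in C_c(X)$ with $\|s-g\|_\phi<\eps/2$.

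The only mild subtlety — the ``main obstacle,'' such as it is — is Step~3: without the doubling assumption the Luxembourg norm and the modular are not topologically equivalent, so controlling $\rho_\phi(\mathbbm{1}_E-g)$ would not control $\|\mathbbm{1}_E-g\|_\phi$ (the $L^\infty$-type endpoint is the obstruction, e.g. $C_c$ is not dense in $L^\infty$). This is precisely why the hypothesis ``$\phi$ doubling'' is needed, and Proposition~\ref{PropDoblante} is exactly the tool that removes it. Everything else is a verbatim transcription of the $L^p$ proof, with $\phi(1)$ playing the role of the constant and $\mathbbm{1}_{U\setminus K}$ the dominating function.
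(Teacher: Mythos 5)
Your proof is correct and follows essentially the same route the paper intends: the paper only sketches this lemma by saying that doubling (via Proposition \ref{PropDoblante}(ii)) gives density of simple functions and that one then reproduces the standard $L^p$ argument, which is precisely your Steps 1--3 (with the regularity of the Radon measure, properness, and Urysohn handling the indicator functions). No gaps.
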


For more details on Orlicz spaces we refer to \cite{RR}. 

\subsection{Continuous group cohomology}

Let $G$ be a locally compact second countable group. A \textit{topological $G$-module} (or simply \textit{G-module}) is a pair $(\pi,V)$, where $V$ is a Hausdorff locally convex topological vector space over $\R$ and $\pi$ is a continuous representation of $G$ on $V$ (that is, $G\times V\to V, (g,v)\mapsto \pi(g)v$ is continuous).

For $k\in\N$ we consider the space $$C(G^{k+1},V)=\{\omega:G^{k+1}\to V : \omega\text{ is continuous}\}$$ 
equipped with the compact-open topology. 


The sum and product on $C(G^{k+1},V)$ are continuous. Furthermore, $C(G^{k+1},V)$ is Hausdorff and locally convex and the  representation $\Pi:G\to \mathrm{Aut}\bigl(C(G^{k+1},V)\bigr)$ defined by 
$$\bigl(\Pi(g)\omega\bigr)(x_0,\ldots,x_k)=(g\cdot \omega)(x_0,\ldots,x_k)=\pi(g)\bigl(\omega(g^{-1}x_0,\ldots,g^{-1}x_k)\bigr)$$
is continuous. We say that $\omega\in C(G^{k+1},V)$ is \textit{$G$-invariant} if $(g\cdot \omega)=\omega$ for every $g\in G$ and denote by $C(G^{k+1},V)^G$ the space of $G$-invariant functions.

In general, if $X$ is any set, $Y$ is a vector space and $A\subset X^{k+1}$, one can consider the \textit{(formal) derivative} of any function $f:A\to Y$,
\begin{equation}\label{DerivadaFormal}
d_kf(x_0,\ldots,x_{k+1})=\sum_{i=0}^{k+1} (-1)^i f(x_0,\ldots,\hat{x}_i,\ldots,x_{k+1}),    
\end{equation}
defined for $(x_0,\ldots,x_{k+1})$ in some subset of $X^{k+2}$. We also write $d=d_k$ when the sub-index is clear. 

Let us focus on the derivative of elements of $C(G^{k+1},V)$ for $k\geq 0$. It is easy to see that $d_k:C(G^{k+1},V)\to C(G^{k+2},V)$ is well-defined and continuous and maps $C(G^{k+1},V)^G$ onto $C(G^{k+2},V)^G$. Then we consider the complex
\begin{equation}\label{Comp.}
C(G^1,V)^G\stackrel{d_0}{\rightarrow}C(G^2,V)^G\stackrel{d_1}{\rightarrow}C(G^3,V)^G\stackrel{d_2}{\rightarrow}\cdots.  
\end{equation}
Its cohomology is called the \textit{continuous (group) cohomology of $G$ with coefficients in $(\pi,V)$}. That is, the family of topological vector spaces
$$H^k(G,V)=\frac{\mathrm{Ker}\ d_k}{\mathrm{Im}\ d_{k-1}}.$$
The \textit{reduced continuous (group) cohomology} of $G$ is the family of $G$-modules
$$\overline{H}^k(G,V)=\frac{\mathrm{Ker}\ d_k}{\overline{\mathrm{Im}\ d_{k-1}}}.$$

By a $G$\textit{-morphism} we mean a linear continuous map between $G$-modules that is $G$-equivariant, that is, $\varphi:A\to B$ such that $\varphi(g\cdot a)=g\cdot \varphi(a)$ for every $g\in G$ and $a\in A$.
A $G$-morphism is a \textit{strong $G$-injection} if it has a continuous left inverse. We say that a $G$-morphism  $\varphi:A\to B$ is \textit{strong} if the induced maps $\mathrm{Ker}\ \varphi\to A$ and $A/\mathrm{Ker}\ \varphi \to B$ are strong $G$-injections. From this we can define a \textit{strong resolution} of a $G$-module $V$ as an exact sequence of $G$-modules and strong $G$-morphisms
\begin{equation}\label{Resolution}
    0\to V \stackrel{d_{-1}}{\rightarrow} A^0\stackrel{d_0}{\rightarrow} A^1\stackrel{d_1}{\rightarrow}\cdots
    \end{equation}
We use the notation $0\to V\stackrel{d_*}{\rightarrow} A^*$ to mean a resolution as above.

\begin{remark}\label{ResolucionFuerte}
To see that \eqref{Resolution} is a strong resolution of $V$ it is enough to show that it admits a continuous contracting homotopy. That is, a family of linear continuous maps $\{h_k\}_{k\geq 0}$ such that
$$\left\{\begin{array}{cc} 
h_0\circ d_{-1} = \mathrm{Id} &  \\
h_{k+1}\circ d_k + d_{k-1}\circ h_k = \mathrm{Id} & \text{ for }k\geq 0.
\end{array}
\right.$$
Indeed, $d_{k-1}\circ h_k$ is the left inverse of $\mathrm{Ker}\ d_k\to A^k$ for $k\geq 0$, and $h_{k+1}$ induces the left inverse of $A^k/\mathrm{Ker}\ d_k\to A^{k+1}$ for $k\geq -1$, by putting $A^{-1}=V$.
\end{remark}

A $G$-module $U$ is \textit{relatively injective} if for every strong $G$-injection $\iota:A\to B$ and $G$-morphism $\varphi:A\to U$, there exists a $G$-morphism $\bar{\varphi}:B\to U$ such that $\bar{\varphi}\circ\iota=\varphi$. A strong $G$-resolution as \eqref{Resolution} is \textit{relatively injective} if $A^k$ is relatively injective for every $k\in\N$.

    
   
    
    
    


An important example of relatively injective strong resolution is given by \eqref{Comp.}. 

\begin{proposition}\label{PropSR}
The complex $0\to V\stackrel{d_*}{\rightarrow}C(G^{*+1},V)$ 
is a relatively injective strong $G$-resolution of $V$, where $d_{-1}(v)\equiv v$.
\end{proposition}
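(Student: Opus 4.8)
The plan is to verify the two assertions---that the resolution is strong and that each term is relatively injective---separately, following the classical template for continuous group cohomology (Guichardet, or the Blanc/Bourdon--Rémy style arguments) but being careful that everything goes through with $V$ a general locally convex $G$-module. First I would establish strongness by exhibiting an explicit continuous contracting homotopy, invoking Remark~\ref{ResolucionFuerte}. The natural choice is the ``evaluation at the identity'' operator: for $k\geq 0$ define $h_k\colon C(G^{k+1},V)\to C(G^k,V)$ by
\[
(h_k\omega)(x_1,\ldots,x_k)=\omega(e,x_1,\ldots,x_k),
\]
and $h_0\colon C(G,V)\to V$ by $h_0\omega=\omega(e)$. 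One then checks the identities $h_0\circ d_{-1}=\mathrm{Id}_V$ and $h_{k+1}\circ d_k+d_{k-1}\circ h_k=\mathrm{Id}$ for $k\geq 0$ by a direct expansion of the formal derivative~\eqref{DerivadaFormal} (the $i=0$ term of $d_k\omega$ evaluated at $(e,x_1,\ldots)$ is $\omega(x_1,\ldots)$ and the remaining terms reassemble $d_{k-1}h_k\omega$). Continuity of each $h_k$ for the compact-open topologies is immediate since evaluation/restriction maps are continuous there, and linearity is clear. This also re-proves exactness of the complex in all degrees, so the sequence \eqref{Resolution} with $V$ in degree $-1$ is genuinely a resolution.

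Next I would prove relative injectivity of $C(G^{k+1},V)$ for each $k$. Given a strong $G$-injection $\iota\colon A\to B$ with continuous left inverse $\sigma\colon B\to A$ (so $\sigma\iota=\mathrm{Id}_A$) and a $G$-morphism $\varphi\colon A\to C(G^{k+1},V)$, I want $\bar\varphi\colon B\to C(G^{k+1},V)$ with $\bar\varphi\iota=\varphi$. The standard device is to first reduce to the ``coefficient'' level: there is a natural correspondence between $G$-morphisms into $C(G^{k+1},V)^G$ and continuous linear maps into $C(G^k,V)$ (or, more simply, one uses that $C(G^{k+1},V)$ is ``coinduced'' from $V$). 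Concretely, for $a\in A$ set
\[
(\bar\varphi b)(x_0,\ldots,x_k)=\bigl(\varphi(\sigma(x_0\cdot b))\bigr)(x_0,\ldots,x_k),
\]
i.e.\ at the point $(x_0,\ldots,x_k)$ we translate $b$ by $x_0$, push it back into $A$ via $\sigma$, apply $\varphi$, and evaluate. A short computation shows $\bar\varphi$ is linear, $G$-equivariant, continuous (composition of continuous maps plus continuity of the $G$-action and of evaluation), and that $\bar\varphi\iota=\varphi$ using $\sigma\iota=\mathrm{Id}$ together with the $G$-equivariance of $\varphi$ and $\iota$. Hence $C(G^{k+1},V)$ is relatively injective, and the resolution is a relatively injective strong $G$-resolution.

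I expect the main obstacle to be bookkeeping around the compact-open topology and the continuity of the maps $h_k$ and $\bar\varphi$: one must check that the formulas above really land in \emph{continuous} $V$-valued functions and that the maps are continuous for the compact-open topology, which requires knowing that the joint action $G\times C(G^{k+1},V)\to C(G^{k+1},V)$ and the evaluation/restriction maps behave well---this is where second countability and local compactness of $G$ (and the fact that $V$ is a Hausdorff locally convex space on which $\pi$ acts continuously) get used. The algebraic identities themselves are routine once the formal derivative is expanded; the care is entirely in the functional-analytic continuity statements and in verifying $G$-equivariance of $\bar\varphi$. A secondary point to be careful about is that $\iota$ being a \emph{strong} $G$-injection gives a continuous---but not necessarily $G$-equivariant---left inverse $\sigma$; this is exactly why the averaging/translation by $x_0$ in the definition of $\bar\varphi$ is needed to restore equivariance, and I would make that explicit.
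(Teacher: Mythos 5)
Your first half is fine: the paper itself only cites Example 2.2 in \cite{BR} for this proposition, and the standard argument behind that citation is exactly your contracting homotopy $(h_k\omega)(x_1,\ldots,x_k)=\omega(e,x_1,\ldots,x_k)$, which is linear, continuous for the compact-open topologies, and satisfies $h_0\circ d_{-1}=\mathrm{Id}$ and $h_{k+1}\circ d_k+d_{k-1}\circ h_k=\mathrm{Id}$; by Remark~\ref{ResolucionFuerte} this gives strongness (and exactness), and no equivariance of the $h_k$ is needed.

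The relative injectivity half, however, has a genuine error: the extension formula $(\bar\varphi b)(x_0,\ldots,x_k)=\bigl(\varphi(\sigma(x_0\cdot b))\bigr)(x_0,\ldots,x_k)$ does not satisfy $\bar\varphi\circ\iota=\varphi$, nor is it $G$-equivariant. Indeed, for $b=\iota(a)$ equivariance of $\iota$, $\varphi$ and $\sigma\circ\iota=\mathrm{Id}$ give $\sigma(x_0\cdot\iota(a))=x_0\cdot a$, hence $(\bar\varphi\iota(a))(x_0,\ldots,x_k)=(x_0\cdot\varphi(a))(x_0,\ldots,x_k)=\pi(x_0)\bigl[\varphi(a)(e,x_0^{-1}x_1,\ldots,x_0^{-1}x_k)\bigr]$, which differs from $\varphi(a)(x_0,\ldots,x_k)$ unless $\varphi(a)$ happens to be $G$-invariant; already for $A=B=C(G,V)$, $\iota=\sigma=\varphi=\mathrm{Id}$ your formula returns $\pi(x_0)b(e)$ instead of $b(x_0)$. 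Equivariance fails for the same reason: nothing compensates the non-equivariance of $\sigma$, since you translate $b$ once but never translate the resulting cochain back. The correct device is to conjugate by the action: set $\bar\varphi(b)(x_0,\ldots,x_k)=\bigl(x_0\cdot\varphi(\sigma(x_0^{-1}\cdot b))\bigr)(x_0,\ldots,x_k)=\pi(x_0)\bigl[\varphi(\sigma(x_0^{-1}\cdot b))(e,x_0^{-1}x_1,\ldots,x_0^{-1}x_k)\bigr]$, i.e.\ translate $b$ to the origin by $x_0^{-1}$, apply $\sigma$ and $\varphi$, then translate forward by $x_0$ before evaluating. With this formula $\bar\varphi\circ\iota=\varphi$ and $G$-equivariance are direct computations, while continuity of $\bar\varphi(b)$ in $(x_0,\ldots,x_k)$ and of $b\mapsto\bar\varphi(b)$ uses the joint continuity of the actions on $B$ and on $C(G^{k+1},V)$ and of the evaluation map $C(G^{k+1},V)\times G^{k+1}\to V$, which is where local compactness of $G$ enters --- so your closing remark identifies the right subtlety, but the formula you wrote does not implement it.
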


\begin{proof}
See \cite[Example 2.2]{BR}.
\end{proof}

The main technique to prove Theorem \ref{MainEquivalence} will be to find a special relatively injective strong $G$-resolution of $L^\phi(G)$, as it is done for the $L^p$-case by Bourdon and Rémy in \cite{BR}, and use the following results.

\begin{proposition}\label{PropEq}
Let $V$ a topological $G$-module. Assume that $0\to V\to A^*$ and $0\to V\to B^*$ are two relatively injective strong $G$-resolutions of $V$. Then the complexes $(A^*)^G$ and $(B^*)^G$ are homotopy equivalent. 
    
\end{proposition}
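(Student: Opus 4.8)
The plan is to follow the classical homological-algebra argument for comparing resolutions, adapted to the category of topological $G$-modules with strong $G$-morphisms as in \cite{BR}. The main tool is the \emph{fundamental lemma of homological algebra}: if $0\to V\to A^*$ is a strong resolution and $0\to V\to B^*$ is a relatively injective $G$-resolution, then any $G$-morphism $V\to V$ (in particular the identity) extends to a $G$-morphism of complexes $\alpha^*\colon A^*\to B^*$ commuting with the differentials, and any two such extensions are $G$-homotopic. So first I would establish this lemma in our setting by induction on the degree: given $\alpha^{k-1}\colon A^{k-1}\to B^{k-1}$ already constructed, one uses that $B^k$ is relatively injective together with the strong $G$-injection $A^{k-1}/\Ker d_{k-1}\hookrightarrow A^k$ (which is a strong injection precisely because the resolution $A^*$ is strong) to fill in $\alpha^k$ so that $\alpha^k\circ d_{k-1}=d_{k-1}\circ\alpha^{k-1}$; the base case uses $d_{-1}\colon V\to A^0$ being a strong $G$-injection and $B^0$ relatively injective. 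Exactness of $A^*$ is what makes the inductive hypothesis $\alpha^{k-1}(\Ker d_{k-1})\subset\Ker d_{k-1}$ propagate.

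Second, by symmetry (both resolutions are relatively injective and strong), I would construct in the same way a $G$-morphism of complexes $\beta^*\colon B^*\to A^*$ extending the identity on $V$. Then $\beta^*\circ\alpha^*\colon A^*\to A^*$ and $\mathrm{Id}_{A^*}$ are both chain maps extending $\mathrm{Id}_V$, so the uniqueness-up-to-homotopy part of the fundamental lemma gives a $G$-equivariant chain homotopy $\beta^*\circ\alpha^*\simeq\mathrm{Id}$, and likewise $\alpha^*\circ\beta^*\simeq\mathrm{Id}$ on $B^*$. Here the homotopies $h_k$ are again produced degree by degree using relative injectivity of the target and the strongness of the source resolution; this is the technically fussiest bookkeeping step, since one must verify at each stage that the relevant map factors through the quotient by a kernel and that the piece one is extending is defined on a subspace admitting a continuous left inverse.

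Third, I would pass to $G$-invariants. Applying the functor $(-)^G$ (which is exact enough for our purposes: it preserves the relations $\alpha^k\circ d=d\circ\alpha^k$ and the homotopy identities, since all maps involved are $G$-morphisms) yields continuous linear maps $(\alpha^*)^G\colon (A^*)^G\to(B^*)^G$ and $(\beta^*)^G$ in the opposite direction, together with continuous chain homotopies witnessing that $(\beta^*)^G\circ(\alpha^*)^G$ and $(\alpha^*)^G\circ(\beta^*)^G$ are homotopic to the respective identities. Thus $(A^*)^G$ and $(B^*)^G$ are homotopy equivalent as complexes of topological vector spaces, which is exactly the assertion. I expect the main obstacle to be purely the careful verification that every map constructed is continuous and that the strong-injection hypotheses are invoked correctly at each inductive step — the algebra is routine but the topological/$G$-equivariant side conditions must be tracked with care; there is no deep new idea beyond faithfully transcribing the classical argument, for which one can also simply cite \cite[Section 2]{BR} or the standard references on continuous cohomology.
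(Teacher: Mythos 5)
Your proposal is correct and is precisely the classical comparison-of-resolutions argument (fundamental lemma of relative homological algebra plus passage to $G$-invariants) that underlies the result; the paper itself does not reprove it but simply cites \cite[p.~177, Proposition 1.1]{Gui}, which carries out exactly the inductive construction and homotopy argument you describe.
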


The proof of Proposition \ref{PropEq} can be found in \cite[p. 177, Proposition 1.1]{Gui}.

Combining Propositions \ref{PropSR} and \ref{PropEq} we obtain: 

\begin{corollary}\label{corRes}
Suppose that $0\to V\to A^*$ is a relatively injective strong $G$-resolution of $V$. Then the cohomology and the reduced cohomology of the complex $(A^*)^G$ are topologically isomorphic to $H^*(G,V)$ and $\overline{H}^*(G,V)$ respectively.
\end{corollary}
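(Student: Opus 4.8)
The plan is to derive Corollary~\ref{corRes} directly from Propositions~\ref{PropSR} and~\ref{PropEq} by a standard comparison argument. First I would take the two relatively injective strong $G$-resolutions of $V$ at hand: the given one $0\to V\to A^*$ and the canonical one $0\to V\stackrel{d_*}{\to}C(G^{*+1},V)$ from Proposition~\ref{PropSR}. Applying Proposition~\ref{PropEq} to this pair yields a homotopy equivalence between the complexes $(A^*)^G$ and $(C(G^{*+1},V))^G$; that is, there are cochain maps $\alpha:(A^*)^G\to (C(G^{*+1},V))^G$ and $\beta:(C(G^{*+1},V))^G\to (A^*)^G$ together with continuous cochain homotopies witnessing $\beta\alpha\simeq\mathrm{Id}$ and $\alpha\beta\simeq\mathrm{Id}$.

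The next step is to observe that a homotopy equivalence of complexes of topological vector spaces induces isomorphisms on both the (unreduced) cohomology and the reduced cohomology. For the unreduced cohomology this is the usual algebraic fact: a cochain map induces a map on $\mathrm{Ker}/\mathrm{Im}$, a homotopy $\beta\alpha\simeq\mathrm{Id}$ forces the induced maps to be mutually inverse, so $\alpha_*$ is a linear isomorphism. Because $\alpha$ and $\beta$ are continuous, $\alpha_*$ and $\beta_*$ are continuous for the quotient topologies, hence $\alpha_*$ is a topological isomorphism. For the reduced cohomology one uses additionally that a continuous cochain map sends $\overline{\mathrm{Im}\,d_{k-1}}$ into $\overline{\mathrm{Im}\,d_{k-1}}$ (continuity gives $\varphi(\overline{S})\subset\overline{\varphi(S)}$), so it descends to $\overline{H}^k$, and the same homotopy argument — noting that the homotopy operators are continuous and therefore the chain-homotopy identity still holds after passing to the closure — shows the induced maps on $\overline{H}^k$ are mutually inverse topological isomorphisms. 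Since the cohomology of $(C(G^{*+1},V))^G$ is by definition $H^*(G,V)$ and its reduced cohomology is $\overline{H}^*(G,V)$, composing gives the desired topological isomorphisms $H^*\bigl((A^*)^G\bigr)\cong H^*(G,V)$ and $\overline{H}^*\bigl((A^*)^G\bigr)\cong\overline{H}^*(G,V)$.

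The only point requiring a little care — and the main (mild) obstacle — is the interplay between the homotopy equivalence and the \emph{topologies}: one must make sure that ``homotopy equivalent'' in Proposition~\ref{PropEq} is meant in the topological category (continuous cochain maps, continuous homotopies), so that the induced maps on the quotient spaces $H^k$ and on the Hausdorff quotients $\overline{H}^k$ are genuinely continuous with continuous inverses, and that passing to $\overline{\mathrm{Im}}$ does not destroy the homotopy identity. This is guaranteed by the construction in \cite{Gui}, where the comparison maps and homotopies are built from the relative injectivity diagrams and are continuous by hypothesis; I would simply remark this and cite \cite[p.~177, Proposition~1.1]{Gui} for the construction, then spell out the two-line argument that a topological homotopy equivalence induces topological isomorphisms on $H^*$ and on $\overline{H}^*$.
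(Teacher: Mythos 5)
Your proposal is correct and follows essentially the same route as the paper, which obtains the corollary precisely by combining Proposition~\ref{PropSR} (the canonical resolution $0\to V\to C(G^{*+1},V)$) with Proposition~\ref{PropEq} and then using that a continuous homotopy equivalence of complexes induces topological isomorphisms on both $H^*$ and $\overline{H}^*$. The care you take with continuity of the comparison maps and homotopies, and with images versus their closures in the reduced case, is exactly the (implicit) content of the paper's one-line argument.
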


\section{Asymptotic Orlicz cohomology}

Let $(X,|\cdot-\cdot|)$ be a metric space equipped with a Borel measure $\mu$ satisfying the \textit{bounded geometry} condition: there exist $r_0>0$ such that
\begin{equation}\label{GeomAcotada}
0<v(r)=\inf\{\mu(B(x,r)):x\in X\}\leq V(r)=\sup\{\mu(B(x,r)):x\in X\}<+\infty 
\end{equation}
for every $r\geq r_0$, where $B(x,r)$ is the open ball of center $x$ and radius $r>0$.

We regard the product space $X^{k+1}$ as a set of $k$-simplices, so it is natural to consider the vector space of $k$-chains
$$C^k(X)=\left\{\sum_{i=1}^m a_i\Delta_i : m\in\N\text{ and }\Delta_i\in X^{k+1}, a_i\in\R\ \forall i=1,\ldots,m\right\},$$
and the boundary operator $\partial: C^k(X)\to C^{k-1}(X)$, defined on $X^{k+1}$ by
\begin{equation}\label{borde}
\partial\Delta=\sum_{i=1}^k \partial_i \Delta,    
\end{equation}
where $\partial_i\Delta =(x_0,\ldots,\hat{x}_i,\ldots,x_k)$ if $\Delta =(x_0,\ldots,x_k)$.

Given $k\in\N$, we equip $X^{k+1}$ with the product measure $\mu^{k+1}=\mu\times\cdots\times\mu$ and the distance
$$|\Delta-\Delta'|=\max\{|x_i-x'_i|:i=0,\ldots,k\}$$
for $\Delta=(x_0,\ldots,x_k)$ and $\Delta=(x'_0,\ldots,x'_k)$. Observe that $\mu^{k+1}$ satisfies 
$$v(r)^{k+1}\leq\mu^{k+1}\bigl(B(\Delta,r)\bigr)\leq V(r)^{k+1}$$
for every $\Delta\in X^{k+1}$ and $r>0$. In order to simplify the notation, we write $d\mu(x)=dx$ and $d\mu^{k+1}(\Delta)=d\Delta$.

For $s>0$ we define
$$X_s^{k+1}=\bigl\{\Delta\in X^{k+1}: \mathrm{diam}(\Delta)\leq s\bigr\}\subset X^{k+1}.$$

Fix a Young function $\phi$, then for every Borel function $u:X^{k+1}\to\R$ and $s>0$ we consider the semi-norm
\begin{equation}\label{semi-norms}
\|u\|_{\phi,s}=\inf\left\{\alpha>0 : \rho_{\phi,s}\left(\frac{u}{\alpha}\right)\leq 1 \right\}, \text{ where }
\rho_{\phi,s}\left(\frac{u}{\alpha}\right):=\int_{X_s^{k+1}} \phi\left(\frac{u(\Delta)}{\alpha}\right)d\Delta.    
\end{equation}
Then we define the $L^\phi$-space of \textit{Alexander-Spanier $k$-cochains} as the space $AS_\phi^k(X)$ of classes of measurable functions $u:X^{k+1}\to \R$ such that $\|u\|_{\phi,s}<+\infty$ for every $s>0$, equipped with the topology induced by the family of semi-norms $\{\|\ \|_{\phi,s}\}_{s>0}$. Observe that each semi-norm $\|\ \|_{\phi,s}$ is the $L^\phi$-norm in the space $X_s^{k+1}$.

An element $u\in AS_\phi^k(X)$ (or a function $u:X^{k+1}\to\R$ in general) can be linearly extended (a.e.) to $\tilde{u}:C^k(X)\to\R$. We will not distinguish between the function $u$ and its extension $\tilde{u}$ from now on.

\begin{remark}
For $t\leq s$ we consider the continuous operator 
$$T_{s,t}:L^\phi\bigl(X_s^{k+1}\bigr)\to L^\phi\bigl(X_t^{k+1}\bigr),\ u\mapsto u|_{X_t^{k+1}}$$
and take the inverse limit
$$\lim_{\leftarrow} L^\phi\bigl(X_s^{k+1}\bigr)=\left\{\{u_s\}\in \prod_{s>0}L^\phi\bigl(X_s^{k+1}\bigr): T_{s,t}(u_s)=u_t\text{ if }t<s\right\}$$
equipped with the topology induced by the family of semi-norms 
$\left\|\{u_s\}\right\|_s=\|u_s\|_\phi$ for $s>0$. It is a Fr\'echet space because it is defined from a dense projective system of Banach spaces (see \cite[Section 3.3.3]{Gol}). This is the Orlicz version of the definition given by Pansu in \cite{Pa95}.

Furthermore, the map
\begin{equation}\label{IsomAS}
AS_\phi^k(X)\to \lim_{\leftarrow} L^\phi\bigl(X_s^{k+1}\bigr),\ u\mapsto \bigl\{u|_{X_s^{k+1}}\bigr\}
\end{equation}
is clearly an isomorphism of topological vector spaces; hence $AS_\phi^k(X)$ is a Fréchet space for every $k$.
\end{remark}


Observe that the formal derivative on $AS_\phi^*(X)$ defined by \eqref{DerivadaFormal} satisfies $du(\Delta)=u(\partial\Delta)$. In this case it can also be called \textit{the coboundary operator} on $AS_\phi^*(X)$.

\begin{proposition}\label{Cont}
The derivative $d_k$ maps continuously $AS_\phi^k(X)$ to $AS_\phi^{k+1}(X)$. Moreover, $d_{k+1}\circ d_k=0$ for every $k\geq 0$ (which is also written as $d^2=0$).
\end{proposition}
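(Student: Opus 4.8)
The plan is to reduce the statement to one quantitative estimate: for each $s>0$ a bound of the form $\|d_ku\|_{\phi,s}\le C_s\,\|u\|_{\phi,s}$ with $C_s$ independent of $u$. Such a bound simultaneously shows that $d_ku\in AS^{k+1}_\phi(X)$ whenever $u\in AS^k_\phi(X)$ and that $d_k\colon AS^k_\phi(X)\to AS^{k+1}_\phi(X)$ is continuous (each defining semi-norm of the target is then dominated by a single semi-norm of the source). The identity $d^2=0$ is separate and purely combinatorial.

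For the estimate I would fix $s>0$ and $0\le i\le k+1$ and study the face map $P_i\colon u\mapsto\bigl(\Delta\mapsto u(\partial_i\Delta)\bigr)$, which sends functions on $X^{k+1}$ to functions on $X^{k+2}$. Since $\mathrm{diam}(\partial_i\Delta)\le\mathrm{diam}(\Delta)$, every face of a simplex in $X^{k+2}_s$ lies in $X^{k+1}_s$, so $P_i$ induces a map $L^\phi\bigl(X^{k+1}_s\bigr)\to L^\phi\bigl(X^{k+2}_s\bigr)$ (that it sends measurable functions to measurable functions and respects equality a.e.\ is immediate from Fubini's theorem, as $\partial_i$ merely forgets one coordinate). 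The key computation: writing a point of $X^{k+2}$ as $(x_i,\hat x)$ with $\hat x=\partial_i\Delta\in X^{k+1}$ and integrating first in $x_i$, Tonelli's theorem gives, for every $\alpha>0$,
\begin{equation*}
\rho_{\phi,s}\!\left(\frac{P_iu}{\alpha}\right)=\int_{X^{k+1}}\phi\!\left(\frac{u(\hat x)}{\alpha}\right)\,\mu\bigl(\bigl\{x_i:(x_i,\hat x)\in X^{k+2}_s\bigr\}\bigr)\,d\hat x .
\end{equation*}
If $\hat x\notin X^{k+1}_s$ the inner measure vanishes; if $\hat x\in X^{k+1}_s$ then $\{x_i:(x_i,\hat x)\in X^{k+2}_s\}$ is contained in the closed $s$-ball about any vertex of $\hat x$, so its measure is at most $V_s:=\sup_{x\in X}\mu\bigl(\overline B(x,s)\bigr)$, which is finite by \eqref{GeomAcotada} (balls of radius $\le r_0$ have measure at most that of balls of radius $r_0$). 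Hence $\rho_{\phi,s}(P_iu/\alpha)\le V_s\,\rho_{\phi,s}(u/\alpha)$, and the convexity of $\phi$ (the argument recalled in Remark \ref{equivNormas}, with $\widetilde V_s:=\max\{1,V_s\}\ge 1$) yields $\|P_iu\|_{\phi,s}\le\widetilde V_s\,\|u\|_{\phi,s}$.

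Since $d_ku(\Delta)=\sum_{i=0}^{k+1}(-1)^iP_iu(\Delta)$ by \eqref{DerivadaFormal}, the triangle inequality gives $\|d_ku\|_{\phi,s}\le(k+2)\,\widetilde V_s\,\|u\|_{\phi,s}$ for every $s>0$, which proves both assertions of the first sentence; under the identification \eqref{IsomAS} this just says that $d_k$ restricts, for each $s$, to a bounded operator $L^\phi\bigl(X^{k+1}_s\bigr)\to L^\phi\bigl(X^{k+2}_s\bigr)$ compatible with the restriction maps $T_{s,t}$. Finally, $d_{k+1}\circ d_k=0$ is the standard simplicial identity: expanding $d_{k+1}(d_ku)(\Delta)=\sum_{i,j}(-1)^{i+j}u(\partial_i\partial_j\Delta)$ and using $\partial_i\partial_j=\partial_{j-1}\partial_i$ for $i<j$, the terms cancel in pairs (equivalently, it follows from $du(\Delta)=u(\partial\Delta)$ together with $\partial\circ\partial=0$ on $C^*(X)$). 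The only non-formal ingredient is the localization estimate for $P_i$: the whole point is that the diameter constraint confines the extra integration variable to a ball of uniformly bounded measure. I do not expect a genuine obstacle here beyond bookkeeping with the constants and the (mild) point that $V_s<\infty$ must be checked for all $s>0$, not just for $s\ge r_0$.
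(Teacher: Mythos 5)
Your proof is correct and follows essentially the same route as the paper: the heart in both cases is the localization estimate that on $X_s^{k+2}$ the omitted coordinate is confined to a ball of measure at most $V(s)$, which controls each face term in $\rho_{\phi,s}$ by $\rho_{\phi,s}$ of $u$ and yields $\|d_ku\|_{\phi,s}\preceq\|u\|_{\phi,s}$, with $d^2=0$ being the standard simplicial cancellation. The only difference is cosmetic: you bound each face operator in the Luxemburg norm and sum via the triangle inequality, whereas the paper applies Jensen's inequality to $\phi$ of the alternating sum, obtains the modular estimate $\rho_{\phi,s}(du)\leq V(s)\rho_{\phi,s}\bigl((k+2)u\bigr)$, and passes to norms by Remark \ref{equivNormas}.
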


\begin{proof}
The condition $d^2=0$ can be verified directly. Let us prove that if $u\in AS_\phi^k(X)$, then $\|du\|_{\phi,s}\preceq \|u\|_{\phi,s}$, which means that there exists a constant $C>0$ such that $\|du\|_{\phi,s}\leq C\|u\|_{\phi,s}$. In this case, the constant  depends on $s$ and $k$.

Using Jensen's inequality, we have
\begin{align*}
\int_{X_s^{k+2}}\phi\left(du\right)\,d\mu^{k+1} 
&\leq \frac{1}{k+2}\int_{X_s^{k+2}}\sum_{i=0}^{k+1}\phi\bigl((k+2)u(\partial_i\Delta)\bigr)\,d\Delta\\
&\leq \frac{1}{k+2}\sum_{i=0}^{k+1}\int_{X_s^{k+1}}\mu\bigl(B(x_{j_i},s)\bigr)\phi\bigl((k+2)u(\partial_i\Delta)\bigr)\,d\Delta, 
\end{align*}
where $j_i$ is any index different from $i$. Applying \eqref{GeomAcotada}, we get
$$\rho_{\phi,s}(du)\leq V(s)\rho_{\phi,s}\bigl((k+2)u\bigr),$$
and then, by Remark \ref{equivNormas}, $\|du\|_{\phi,s}\preceq \|u\|_{\phi,s}$. 
\end{proof}

We now consider the complex
$$AS^0_\phi(X)\stackrel{d_0}{\rightarrow} AS^1_\phi(X)\stackrel{d_1}{\rightarrow} AS^2_\phi(X)\stackrel{d_2}{\rightarrow} \cdots$$
Its cohomology is the \textit{asymptotic $L^\phi$-cohomology} of $X$. We denote it by
$$L^\phi H_{AS}^
*(X)=\frac{\Ker d_*}{\I d_{*-1}}.$$
We also define the \textit{reduced $L^\phi$-cohomology} of $X$ as the family of Fr\'echet spaces 
$$L^\phi \overline{H}_{AS}^*(X)=\frac{\Ker d_*}{\overline{\I d_{*-1}}}.$$

\subsection{Quasi-isometry invariance}

By a kernel on $X$ we mean a non-negative bounded function $\kappa:X\to\R$ satisfying the following conditions:
\begin{enumerate}
    \item[(c)] There exists $K>0$ such that if $|x-x'|>K$, then $\kappa(x,x')=0$.
    \item[(d)] For every $x\in X$,
    \begin{equation}\label{kernel1}
    \int_X \kappa(x,x') dx'=1.
    \end{equation}
\end{enumerate}
Because of the bounded geometry \eqref{GeomAcotada}, it is always possible to take a kernel on $X$, for instance one can consider
$$\kappa(x,x')=\frac{1}{\mu(B(x,K))}\mathbbm{1}_{B(x,K)}(x'),$$
with $K\geq r_0$.

Observe that for every $x'\in X$,
\begin{equation}\label{kernel2}
\int_X\kappa(x,x')dx\leq \sup (\kappa)V(K).
\end{equation}

If $\Delta=(x_0,\ldots,x_k)$ and $\Delta'=(x'_0,\ldots,x'_k)$, then we write 
\begin{equation}\label{kernel3}
\kappa(\Delta,\Delta')=\prod_{i=0}^k \kappa(x_i,x'_i).    
\end{equation}
It is clear that for a fixed $\Delta=(x_0,\ldots,x_k)$ we have $\int_{X^{k+1}}\kappa(\Delta,\Delta')\,d\Delta'=1$.

Now consider another metric space $(Y,|\cdot-\cdot|)$ equipped with a Borel measure $\nu$  satisfying \eqref{GeomAcotada} with functions $\overline{v}$ and $\overline{V}$. Suppose that $F:X\to Y$ is a quasi-isometry and $\overline{F}:Y\to X$ is a quasi-inverse of $F$. We can assume that $\lambda\geq 1$ and $\epsilon\geq 0$ satisfy conditions $(a)$ and $(b)$ for both $F$ and $\overline{F}$. Observe that $F$ and $\overline{F}$ induce quasi-isometries $F:X^{k+1}\to Y^{k+1}$ and $\overline{F}:Y^{k+1}\to X^{k+1}$ with the same constants.

For a kernel $\kappa_Y$ in $Y$, define the pull-back of a function $u:Y^{k+1}\to \R$ by $F$ as follows:
$$F^* u: X^{k+1}\to \R,\ F^* u(\Delta_X)=\int_{Y^{k+1}} u(\Delta_Y)\kappa_Y(F\Delta_X,\Delta_Y)\, d\Delta_Y.$$ 

\begin{lemma}\label{pull-back}
The pull-back $F^*$ defines a continuous map from $AS_\phi^k(Y)$ to $AS_\phi^k(X)$.
\end{lemma}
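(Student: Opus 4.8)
The plan is to estimate the Luxembourg semi-norm $\|F^*u\|_{\phi,s}$ on $X$ in terms of the semi-norms $\|u\|_{\phi,s'}$ on $Y$ for a suitably enlarged scale $s'$. First I would fix $s>0$ and observe that, since $\kappa_Y$ is supported within distance $K$ of the diagonal and $F$ is a quasi-isometry with constants $\lambda,\epsilon$, whenever $\Delta_X\in X_s^{k+1}$ the kernel $\kappa_Y(F\Delta_X,\Delta_Y)$ vanishes unless $\Delta_Y$ lies in the region of $Y^{k+1}$ where $\diam(\Delta_Y)\le s':=\lambda s+\epsilon+2K$ (or a similar explicit bound); hence the defining integral only ever sees values of $u$ on $Y_{s'}^{k+1}$. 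This reduces the problem to bounding $F^*$ as an operator $L^\phi(Y_{s'}^{k+1})\to L^\phi(X_s^{k+1})$.

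Next I would apply Jensen's inequality to the convex function $\phi$. For fixed $\Delta_X$ the map $\Delta_Y\mapsto\kappa_Y(F\Delta_X,\Delta_Y)$ is a probability density (it integrates to $1$ by \eqref{kernel3} and the normalization \eqref{kernel1}), so
\begin{align*}
\phi\left(\frac{F^*u(\Delta_X)}{\alpha}\right)
&=\phi\left(\int_{Y^{k+1}}\frac{u(\Delta_Y)}{\alpha}\,\kappa_Y(F\Delta_X,\Delta_Y)\,d\Delta_Y\right)\\
&\le\int_{Y^{k+1}}\phi\left(\frac{u(\Delta_Y)}{\alpha}\right)\kappa_Y(F\Delta_X,\Delta_Y)\,d\Delta_Y.
\end{align*}
Then I would integrate over $\Delta_X\in X_s^{k+1}$, exchange the order of integration by Tonelli, and bound $\int_{X^{k+1}}\kappa_Y(F\Delta_X,\Delta_Y)\,d\Delta_X$. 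Using the lower quasi-isometry bound for $F$ together with \eqref{kernel2} (componentwise, raised to the power $k+1$), this inner integral is bounded by a constant $C=C(k,K,\lambda,\epsilon,\overline V,v)$ independent of $\Delta_Y$. This yields $\rho_{\phi,s}(F^*u/\alpha)\le C\,\rho_{\phi,s'}(u/\alpha)$, and by Remark \ref{equivNormas} (the scaling $\rho_\phi(f)\le C\rho_\phi(g)$ implies a norm comparison) we get $\|F^*u\|_{\phi,s}\preceq\|u\|_{\phi,s'}$, which shows both that $F^*u\in AS_\phi^k(X)$ whenever $u\in AS_\phi^k(Y)$ and that $F^*$ is continuous for the respective Fréchet topologies.

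The main obstacle I expect is the bookkeeping in the change of variables: one must show that $\int_{X^{k+1}}\kappa_Y(F\Delta_X,\Delta_Y)\,d\Delta_X$ is uniformly bounded in $\Delta_Y$, even though $F$ need not be injective or measure-preserving. The key point is that the integrand is supported, for each coordinate, on the preimage under $F$ of a ball of radius $K$ around $y_i$; by the lower bound in condition (a) such a preimage has diameter at most $\lambda(2K+\epsilon)$, hence lies in a single ball of controlled radius in $X$, whose $\mu$-measure is at most $V(r)$ for an explicit $r\ge r_0$ by bounded geometry \eqref{GeomAcotada}. Combining this with the bound $\sup(\kappa_Y)\,\overline V(K)$ on each factor from \eqref{kernel2} and taking the product over the $k+1$ coordinates gives the desired uniform constant. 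A minor additional point is measurability of $F^*u$, which follows from Tonelli's theorem since $(\Delta_X,\Delta_Y)\mapsto u(\Delta_Y)\kappa_Y(F\Delta_X,\Delta_Y)$ is jointly measurable ($F$ being Borel, as we may assume after a standard adjustment of the quasi-isometry).
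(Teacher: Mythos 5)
Your proposal is correct and follows essentially the same route as the paper: Jensen's inequality with the probability density $\kappa_Y(F\Delta_X,\cdot)$, Tonelli, and then showing the inner integral $\int_{X_s^{k+1}}\kappa_Y(F\Delta_X,\Delta_Y)\,d\Delta_X$ is uniformly bounded and supported in $Y_{s'}^{k+1}$ with $s'=2K+\lambda s+\epsilon$, yielding $\rho_{\phi,s}(F^*u)\preceq\rho_{\phi,s'}(u)$ and hence $\|F^*u\|_{\phi,s}\preceq\|u\|_{\phi,s'}$. The only cosmetic difference is that you bound the inner integral via the diameter of $F^{-1}\bigl(B(y_i,K)\bigr)$ using the lower quasi-isometry bound, while the paper picks a base simplex $\overline{\Delta}_X$ with $|F\overline{\Delta}_X-\Delta_Y|\leq\epsilon$ and works with the ball around it; both give a constant of the form $(\sup\kappa_Y)^{k+1}V(\lambda(2K+\epsilon))^{k+1}$ (note the relevant factor is $V$ on $X$ at this enlarged radius, not $\overline V(K)$).
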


\begin{proof}
First observe that Jensen's inequality implies
$$\phi\bigl(F^*u(\Delta_X)\bigr)\leq \int_{Y^{k+1}} \phi\bigl(u(\Delta_Y)\bigr)\kappa_Y(F\Delta_X,\Delta_Y)\, d\Delta_Y.$$
Hence, for every  $s>0$ we have
\begin{align*}
\rho_{\phi,s}\bigl(F^*u(\Delta_X)\bigr) & \leq \int_{X_s^{k+1}} \int_{Y^{k+1}} \phi\left(u(\Delta_Y)\right)\kappa_Y(F\Delta_X,\Delta_Y)\, d\Delta_Yd\Delta_X\\
& = \int_{Y^{k+1}} \phi\left(u(\Delta_Y)\right)\Psi_s(\Delta_Y)\, d\Delta_Y,
\end{align*}
where 
$$\Psi_s(\Delta_Y)=\int_{X_s^{k+1}} \kappa_Y(F\Delta_X,\Delta_Y)\,d\Delta_X.$$

\underline{Claim}: $\Psi_s$ is bounded and its support is included in $Y_{s'}^{k+1}$, where $s'$ depends on $s$, the constants $K$, $\lambda$, and $\epsilon$ and the function $V$.\\

For $\Delta_Y\in Y^{k+1}$ consider $\overline{\Delta}_X\in X^{k+1}$ such that $|F\overline{\Delta}_X-\Delta_Y|\leq \epsilon$ (this simplex exists because $F$ is a quasi-isometry with constants $\lambda$ and $\epsilon$). If $\kappa_Y(F\Delta_X,\Delta_Y)\neq 0$, then $|F\overline{\Delta}_X-F\Delta_X|\leq \epsilon+K$ and therefore $|\overline{\Delta}_X-\Delta_X|\leq \lambda(2\epsilon+K)$. This implies that if $H$ is the supremum of $\kappa_Y$, then
$$\Psi_s(\Delta_Y)\leq\int_{X_s^{k+1}} H \mathds{1}_{B\bigl(\overline{\Delta}_X,\lambda(2\epsilon+K)\bigr)}d\Delta_X\leq H V\bigl(\lambda(2\epsilon+K)\bigr)^{k+1}=:\overline{H},$$
which shows that $\Psi_s$ is bounded.

In order to prove the other part of the claim, observe that if $\Delta_X\in X_s^{k+1}$, then $\mathrm{diam}(F\Delta_X)\leq \lambda s+\epsilon$. If in addition $\kappa_Y(F\Delta_X,\Delta_Y)\neq 0$ for some $\Delta_X\in X_s^{k+1}$, then we have 
\begin{equation}\label{diam-deltay}
\mathrm{diam}(\Delta_Y)\leq 2K+\lambda s+\epsilon.
\end{equation}
Indeed, if $\Delta_X=(x_0,\dots,x_k)$ and $\Delta_Y=(y_0,\dots,y_k)$ then for any $i$ and $j$
$$
|y_i-y_j|\le |y_i- F(x_i)| + |F(x_i)-F(x_j)| + |F(x_j)-y_j| \le  K + \lambda s + \epsilon + K = 2K + \lambda s + \epsilon,
$$
which implies~\eqref{diam-deltay}.

The proof of the claim finishes by taking $s'=2K+\lambda s+\epsilon$.\\

Putting the above together, for $s>0$ and $u\in AS_\phi^{k}(Y)$ we have $\rho_{\phi,s}(F^*u)\leq \overline{H}\rho_{\phi,s'}(u)$, and hence $\|F^*u\|_{\phi,s}\preceq \|u\|_{\phi,s'}$, where the constant does not depend on $u$.
\end{proof}

It is easy to show that the pull-back $F^*$ commutes with $d$, and thus it defines maps in (reduced) cohomology.

\begin{lemma}\label{lemaKernel}
Suppose that $\kappa_X$ and $\kappa_Y$ are kernels on $X$ and $Y$ respectively. Then the function $\kappa:X\times X\to\R$ defined by 
$$\kappa(x,x')=\int_Y \kappa_Y\bigl(F(x),y\bigr)\kappa_X\bigl(\overline{F}(y),x'\bigr)\,dy$$
is a kernel. 
\end{lemma}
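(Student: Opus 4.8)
The plan is to verify the two defining properties of a kernel directly from the definition of $\kappa$, using that $\kappa_X$ and $\kappa_Y$ are kernels and that $F$ is a quasi-isometry with constants $\lambda,\eps$. Non-negativity and boundedness are immediate: $\kappa$ is a product-integral of non-negative functions, and $\kappa(x,x')\le \sup(\kappa_X)\int_Y\kappa_Y(F(x),y)\,dy=\sup(\kappa_X)$, so $\sup(\kappa)\le\sup(\kappa_X)<+\infty$.

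For property (c) I would argue as follows. Suppose $\kappa(x,x')\neq 0$; then there exists $y\in Y$ with $\kappa_Y(F(x),y)\neq 0$ and $\kappa_X(\overline F(y),x')\neq 0$. Let $K_X,K_Y$ be the cut-off constants for $\kappa_X,\kappa_Y$ from condition (c). Then $|F(x)-y|\le K_Y$ and $|\overline F(y)-x'|\le K_X$. Applying the lower quasi-isometry bound (a) to $\overline F$ (which, as stated in the excerpt, we may take to satisfy (a)–(b) with the same $\lambda,\eps$) gives
$$
|x'-\overline F(F(x))|\le |x'-\overline F(y)|+|\overline F(y)-\overline F(F(x))|\le K_X+\lambda|y-F(x)|+\eps\le K_X+\lambda K_Y+\eps,
$$
and since $\overline F\circ F$ is at bounded distance, say $\le C_0$, from the identity, $|x-x'|\le |x-\overline F(F(x))|+|x'-\overline F(F(x))|\le C_0+K_X+\lambda K_Y+\eps=:K$. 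Hence $\kappa(x,x')=0$ whenever $|x-x'|>K$.

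For property (d) I would use Fubini together with the normalizations \eqref{kernel1} for both kernels: for fixed $x$,
$$
\int_X\kappa(x,x')\,dx'=\int_X\int_Y\kappa_Y(F(x),y)\kappa_X(\overline F(y),x')\,dy\,dx'=\int_Y\kappa_Y(F(x),y)\left(\int_X\kappa_X(\overline F(y),x')\,dx'\right)dy=\int_Y\kappa_Y(F(x),y)\,dy=1.
$$
The only point needing care is the interchange of integrals, which is justified since the integrand is non-negative (Tonelli) and, by the support bounds from condition (c) applied to $\kappa_Y$ and the boundedness of both kernels, the domain of integration is effectively of finite measure by \eqref{GeomAcotada}. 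I do not expect any genuine obstacle here; the main point to get right is simply the bookkeeping in step (c), making sure the cut-off radius $K$ is expressed in terms of $\lambda$, $\eps$, the constant $C_0$ controlling $\overline F\circ F$, and the two kernels' cut-off constants.
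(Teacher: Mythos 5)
Your proof is correct and follows essentially the same route as the paper: bound the support radius using the quasi-isometry constants of $\overline{F}$ and the bounded distance of $\overline{F}\circ F$ from the identity (your constant $C_0+K_X+\lambda K_Y+\eps$ matches the paper's $K'=(\lambda+1)K+\eps+C$), and verify the normalization by Tonelli and \eqref{kernel1}. Your boundedness estimate $\kappa(x,x')\leq \sup(\kappa_X)$ via the normalization of $\kappa_Y$ is in fact slightly cleaner than the paper's bound $\overline{V}(K)\sup(\kappa_X)\sup(\kappa_Y)$, but this is only a cosmetic difference.
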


\begin{proof}
Suppose that $K>0$ is the constant in $(c)$ for both kernels $\kappa_X$ and $\kappa_Y$. Assume also that the uniform distance between $\overline{F}\circ F$ and $Id_X$ and between $F\circ \overline{F}$ and $Id_Y$ is bounded by $C\geq 0$.

Observe that if $\kappa_Y\bigl(F(x),y\bigr)\neq 0$, then $|x-\overline{F}(y)|\leq \lambda K+\epsilon+C$. From this we conclude that if $|x-x'|>K'=(\lambda+1)K+\epsilon+C$, then $\kappa_Y\bigl(F(x),y\bigr)=0$ or $\kappa_X\bigl(\overline{F}(y),x'\bigr)=0$ for every $y\in Y$, and hence $\kappa(x,x')=0$.

To see that $\kappa$ is bounded observe that given $x,x'\in X$, the support of the function 
$$y\mapsto \kappa_Y\bigl(F(x),y\bigr)\kappa_X\bigl(\overline{F}(y),x'\bigr)$$
is contained in the ball $B(F(x),K)$, thus
$\kappa(x,x')\leq \overline{V}(K)\sup(\kappa_X)\sup(\kappa_Y)$.

A direct calculation shows that $\int_X\kappa(x,x') dx'=1$ for every $x\in X$, which finishes the proof. \end{proof}

In order to prove Theorem \ref{MainInvariance}, we adapt an argument given in \cite{Pa95} (see also \cite{G}). In particular, we use the following operator: given $u:X^{k+2}\to \R$ and $\Delta\in X^{k+1}$, we consider
$$B_k u (\Delta)=\int_{X^{k+1}} u\bigl(b(\Delta,\Delta')\bigr)\kappa(\Delta,\Delta')\,d\Delta',$$
where 
$$b(\Delta,\Delta')=\sum_{i=0}^k(-1)^i (x_0,\ldots,x_i,x'_i,\ldots,x_k').$$
for $\Delta=(x_0,\ldots,x_k)$ and $\Delta'=(x_0',\ldots,x_k')$. Here $\kappa$ is the kernel given by Lemma \ref{lemaKernel}.

\begin{lemma}[Lemma 3.3.3 in \cite{G}]\label{lemaGenton}
Let $\Delta,\Delta'\in X^{k+1}$, then
$$\partial b(\Delta,\Delta')= \Delta'-\Delta -\sum_{i=0}^k b(\partial_i\Delta,\partial_i\Delta').$$
\end{lemma}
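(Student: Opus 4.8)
The plan is to prove the combinatorial identity by unwinding both sides in terms of the explicit faces of the prism-type chain $b(\Delta,\Delta')$. Recall that for $\Delta=(x_0,\ldots,x_k)$ and $\Delta'=(x'_0,\ldots,x'_k)$ we have
$$b(\Delta,\Delta')=\sum_{i=0}^k(-1)^i(x_0,\ldots,x_i,x'_i,\ldots,x'_k),$$
a signed sum of $(k+1)$-simplices, so $\partial b(\Delta,\Delta')$ is a doubly-indexed signed sum obtained by deleting, from the $i$-th term, each of its $k+2$ vertices in turn. First I would write $\partial b(\Delta,\Delta')=\sum_{i=0}^k(-1)^i\partial\sigma_i$, where $\sigma_i=(x_0,\ldots,x_i,x'_i,\ldots,x'_k)$, and split the inner boundary $\partial\sigma_i$ into three groups of terms: those deleting a vertex $x_j$ with $j<i$, those deleting one of the two ``middle'' vertices $x_i$ or $x'_i$, and those deleting a vertex $x'_j$ with $j>i$.

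The key mechanism is the usual telescoping/cancellation between adjacent values of $i$. In $\partial\sigma_i$ the term deleting the middle vertex $x'_i$ (leaving $(x_0,\ldots,x_i,x'_{i+1},\ldots,x'_k)$) should cancel against the term in $\partial\sigma_{i+1}$ that deletes the middle vertex $x_{i+1}$ (leaving $(x_0,\ldots,x_i,x'_{i+1},\ldots,x'_k)$, the same simplex), once the signs $(-1)^i$, the internal position sign, and the shift $i\mapsto i+1$ are accounted for. After this cancellation the only surviving ``pure middle'' contributions are the extreme ones: deleting $x_0$ from $\sigma_0$ gives $(x'_0,\ldots,x'_k)=\Delta'$ and deleting $x'_k$ from $\sigma_k$ gives $(x_0,\ldots,x_k)=\Delta$, and one checks the residual signs produce exactly $+\Delta'-\Delta$. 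Meanwhile, collecting for each fixed $\ell$ the terms of $\partial\sigma_i$ that delete $x_\ell$ (over $i>\ell$) together with those that delete $x'_\ell$ (over $i<\ell$) reassembles precisely $-b(\partial_\ell\Delta,\partial_\ell\Delta')$: deleting $x_\ell$ from $\sigma_i$ with $i>\ell$ yields a term of the prism over $(\partial_\ell\Delta,\partial_\ell\Delta')$ with the $x$-block shortened, deleting $x'_\ell$ from $\sigma_i$ with $i<\ell$ yields one with the $x'$-block shortened, and the two families glue along $i=\ell$ (where $\sigma_\ell$ contains both $x_\ell$ and $x'_\ell$).

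So the steps, in order, are: (1) expand $\partial b$ as the double sum over $(i,\text{deleted vertex})$; (2) isolate the eight boundary-of-a-boundary-style cancellations among the two ``middle-vertex'' deletions of consecutive $\sigma_i$, leaving the two extreme terms; (3) verify the signs on those two extreme terms give $\Delta'-\Delta$; (4) for each $\ell\in\{0,\ldots,k\}$, regroup the remaining ``side-vertex'' deletions and identify them with the $i$-indexed expansion of $b(\partial_\ell\Delta,\partial_\ell\Delta')$, tracking the sign to get the minus. I expect the only real obstacle to be bookkeeping: getting every $(-1)$ right, since each deleted vertex carries a position-dependent sign inside $\partial\sigma_i$, the position of $x_i$ inside $\sigma_i$ is $i$ while that of $x'_i$ is $i+1$, and the reindexing $i\leftrightarrow i+1$ in the cancellation shifts these positions. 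The cleanest way to manage this is to fix notation $\sigma_i=(v_0,\ldots,v_{k+1})$ with $v_j=x_j$ for $j\le i$ and $v_j=x'_{j-1}$ for $j>i$, so that $\partial\sigma_i=\sum_{j=0}^{k+1}(-1)^j(v_0,\ldots,\widehat{v_j},\ldots,v_{k+1})$, and then do all the matching at the level of these indices $j$; this reduces the whole proof to two short sign computations that I would carry out explicitly but which are routine. No deeper idea is needed beyond the standard prism-operator identity, of which this is a variant adapted to the ``second-coordinate'' prism over a pair of simplices.
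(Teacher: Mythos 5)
Your overall plan is the natural (and essentially the only) one, and in fact the paper gives no proof of this lemma at all — it simply cites \cite{G} — so the direct expansion-and-regrouping you propose is exactly what is needed; your steps (1)--(3) are correct, including the pairwise cancellation of the two ``middle-vertex'' deletions of consecutive $\sigma_i$ and the residual signs giving $+\Delta'-\Delta$. The genuine problem is the asserted outcome of step (4). Writing $\sigma_i=(x_0,\dots,x_i,x'_i,\dots,x'_k)$ with external sign $(-1)^i$: deleting $x_\ell$ (position $\ell$) from $\sigma_i$ with $i>\ell$ contributes the sign $(-1)^{i+\ell}$, while the same simplex occurs in $b(\partial_\ell\Delta,\partial_\ell\Delta')$ as its $(i-1)$-st term, with coefficient $(-1)^{i-1}$; deleting $x'_\ell$ (position $\ell+1$) from $\sigma_i$ with $i<\ell$ contributes $(-1)^{i+\ell+1}$, while the corresponding term of $b(\partial_\ell\Delta,\partial_\ell\Delta')$ carries $(-1)^{i}$. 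In both cases the ratio is $-(-1)^{\ell}$, so for fixed $\ell$ the ``side-vertex'' deletions reassemble into $-(-1)^{\ell}\,b(\partial_\ell\Delta,\partial_\ell\Delta')$, not into $-\,b(\partial_\ell\Delta,\partial_\ell\Delta')$. What your computation actually proves is
\begin{equation*}
\partial b(\Delta,\Delta')=\Delta'-\Delta-\sum_{i=0}^{k}(-1)^{i}\,b(\partial_i\Delta,\partial_i\Delta'),
\end{equation*}
and the two formulas genuinely differ: already for $k=1$ one has $\partial b(\Delta,\Delta')=\Delta'-\Delta+(x_0,x'_0)-(x_1,x'_1)$, whereas the unsigned right-hand side would be $\Delta'-\Delta-(x_0,x'_0)-(x_1,x'_1)$.

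So no amount of bookkeeping will close step (4) as you stated it; the displayed statement has to be read with the factors $(-1)^i$ (equivalently, with $\partial_i$ understood as the signed face entering $\partial\Delta=\sum_i(-1)^i\partial_i\Delta$, the convention the paper actually uses elsewhere despite the unsigned formula \eqref{borde}). This is confirmed by the way the lemma is applied in the proof of Theorem \ref{MainInvariance}, where the corresponding term appears as $\sum_{i=0}^k(-1)^i\int u\bigl(b(\partial_i\Delta,\partial_i\Delta')\bigr)\kappa(\Delta,\Delta')\,d\Delta'$, i.e.\ with the signs present. Carry the factor $(-1)^\ell$ through your regrouping and your argument becomes a complete and correct proof of the signed identity, which is the version the rest of the paper needs.
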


\begin{lemma}
For every $k\geq 0$, $B_k$ defines a continuous operator from $AS_\phi^{k+1}(X)$ to $AS_\phi^{k}(X)$.
\end{lemma}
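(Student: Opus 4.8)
The plan is to mimic the proof of Lemma~\ref{pull-back}, replacing the composition with $F$ by the operator $b(\Delta,\Delta')$ and the kernel $\kappa_Y$ by the kernel $\kappa$ from Lemma~\ref{lemaKernel}. First I would apply Jensen's inequality to the defining integral: since $\int_{X^{k+1}}\kappa(\Delta,\Delta')\,d\Delta'=1$ for fixed $\Delta$, we get
$$\phi\bigl(B_ku(\Delta)\bigr)=\phi\left(\int_{X^{k+1}}u\bigl(b(\Delta,\Delta')\bigr)\kappa(\Delta,\Delta')\,d\Delta'\right)\leq \int_{X^{k+1}}\phi\bigl(u(b(\Delta,\Delta'))\bigr)\kappa(\Delta,\Delta')\,d\Delta'.$$
A subtlety here is that $b(\Delta,\Delta')$ is a \emph{chain}, namely an alternating sum of $k+1$ simplices, not a single simplex, so $u(b(\Delta,\Delta'))$ really means $\sum_{i=0}^k(-1)^i u(x_0,\dots,x_i,x_i',\dots,x_k')$. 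I would absorb the sum with a second application of Jensen (or simply the convexity estimate $\phi(\sum a_i)\le \frac{1}{k+1}\sum \phi((k+1)a_i)$ after writing $B_ku$ as a sum of $k+1$ pieces), so that it suffices to bound, for each fixed $i$, an integral of the form $\int\phi\bigl((k+1)u(x_0,\dots,x_i,x_i',\dots,x_k')\bigr)\kappa(\Delta,\Delta')\,d\Delta'$.

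Next I would integrate over $\Delta\in X_s^{k+1}$ and use Tonelli to swap the order of integration, arriving at an expression of the shape $\int_{X^{k+2}}\phi\bigl((k+1)u(\Theta)\bigr)\Phi_s(\Theta)\,d\Theta$ for a suitable weight $\Phi_s$, where $\Theta$ ranges over the $(k+2)$-tuples $(x_0,\dots,x_i,x_i',\dots,x_k')$ obtained from the change of variables. The crucial \textbf{claim}, exactly parallel to the one in Lemma~\ref{pull-back}, is that $\Phi_s$ is bounded (uniformly in $\Theta$) and supported in $X_{s''}^{k+2}$ for some $s''$ depending only on $s$, $k$, the kernel constant $K'$ of $\kappa$, the quasi-isometry constants $\lambda,\epsilon$, the distance bound $C$, and the function $V$. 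Boundedness follows as before: $\kappa$ is bounded (Lemma~\ref{lemaKernel}) and has support of controlled diameter, so the inner integral in $\Delta'$ against $\kappa(\Delta,\Delta')$ is bounded by $\sup(\kappa)\cdot V(K')^{k+1}$. For the support control, I would argue that if $\Delta\in X_s^{k+1}$ and $\kappa(\Delta,\Delta')\ne 0$, then each $x_j'$ is within $K'$ of $x_j$, hence $\diam(\Delta')\le s+2K'$, and therefore every coordinate of $b(\Delta,\Delta')$ — all of which are among $x_0,\dots,x_k,x_0',\dots,x_k'$ — lies within distance $s+2K'$ of $x_0$; so the $(k+2)$-tuple $\Theta$ has diameter at most $2(s+2K')$, giving $s''=2s+4K'$.

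Putting this together yields $\rho_{\phi,s}(B_ku)\leq \overline{H}\,\rho_{\phi,s''}\bigl((k+1)u\bigr)$ for a constant $\overline{H}$ independent of $u$, and Remark~\ref{equivNormas} converts this modular estimate into $\|B_ku\|_{\phi,s}\preceq\|u\|_{\phi,s''}$, which is precisely the continuity of $B_k\colon AS_\phi^{k+1}(X)\to AS_\phi^{k}(X)$. The main obstacle I anticipate is purely bookkeeping: handling the chain $b(\Delta,\Delta')$ term by term and correctly tracking which coordinates survive in each summand so that the diameter bound on $\Theta$ is uniform over $i$ — nothing deep, but the change of variables $(\Delta,\Delta')\mapsto \Theta$ must be checked to be measure-preserving on each piece (it is just a projection onto a sub-collection of coordinates composed with Fubini, so the "extra" coordinates integrate out the $\kappa$-factor). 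Once that is set up, the estimate is a verbatim copy of the Lemma~\ref{pull-back} argument.
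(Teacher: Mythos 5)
Your proposal is correct and follows essentially the same route as the paper's proof: Jensen's inequality against the kernel, splitting the chain $b(\Delta,\Delta')$ into its $k+1$ simplices, Tonelli plus integrating out the unused $\kappa$-factors via \eqref{kernel1}--\eqref{kernel3}, and a diameter bound showing each resulting $(k+2)$-simplex lies in $X^{k+2}_{s+cK'}$ (the paper gets $s+2K'$; your coarser $2s+4K'$ works just as well), yielding $\|B_ku\|_{\phi,s}\preceq\|u\|_{\phi,s''}$.
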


\begin{proof}
Fix $s>0$ and take $u\in AS_\phi^{k+1}(X)$ and $\Delta=(x_0,\ldots,x_k)\in X_s^{k+1}$, then
\begin{align*}
|B_ku (\Delta)| 
\leq \sum_{i=0}^k \int_{X^{k+1}} |u(\Delta_i)|\kappa(\Delta,\Delta')d\Delta',
\end{align*}
where $\Delta'=(x_0',\ldots,x_k')$ and $\Delta_i=(x_0,\ldots,x_i,x_i',\ldots,x_k')$. Using Jensen's inequality, we have
\begin{align*}
\rho_{\phi,s}\left(B_ku\right) \preceq \sum_{i=0}^k \int_{X_s^{k+1}}\int_{X^{k+1}} \phi\bigl(u(\Delta_i)\bigr)\kappa(\Delta,\Delta')\,d\Delta'd\Delta.
\end{align*}
We write each term of the above sum as
$$\int_{X^{k+1}}\int_{X^{k+1}} \phi\bigl(u(\Delta_i)\bigr)\mathds{1}_{X^{k+1}_s}(\Delta)\kappa(\Delta,\Delta')\,d\Delta'd\Delta.$$
Notice that $\mathds{1}_{X^{k+1}_s}(\Delta)\kappa(\Delta,\Delta')\neq 0$ implies $\Delta_i\in X^{k+1}_{s+2K'}$. Thus, using \eqref{kernel1}, \eqref{kernel2} and \eqref{kernel3}, we obtain
$$\rho_{\phi,s}\left(B_ku\right)\preceq \sum_{i=0}^k \int_{X^{k+2}_{s+2K'}}(\sup \kappa)V(K')^k\phi\bigl(u(\Delta_i)\bigr)\,d\Delta_i \preceq \rho_{\phi,s}\left(u\right).$$
This implies that $\|B_k u\|_{\phi,s}\preceq \|u\|_{\phi,s+2K'}$, which finishes the proof.
\end{proof}

\begin{proof}[Proof of Theorem \ref{MainInvariance}]
We need to prove that $F^*\circ\overline{F}^*$ and $\overline{F}^*\circ F^*$ are homotopic to the identity. We will prove the first assertion by verifying
\begin{equation}\label{homotopia}
\left\{\begin{array}{cc} B_0\circ d_0 = F^*\circ \overline{F}^* -Id & \\ B_{k+1}\circ d_{k+1}+d_k\circ B_{k}=F^*\circ \overline{F}^* -Id & \text{ for all }k\geq 0.
\end{array}\right.
\end{equation}
The other part is analogous.

If $u\in AS_\phi^0(X)$, then we have
\begin{align*}
(B_0\circ d_0) u(x_0) &= \int_X du\bigl(b(x_0,x)\bigr)\kappa(x_0,x)\,dx = \int_X u(x)\kappa(x_0,x)\,dx-u(x_0)\\
&= \int_X u(x)\left( \int_Y \kappa_Y\bigl(F(x_0),y\bigr)\kappa_X\bigl(\overline{F}(y),x\bigr)dy \right) dx - u(x_0)\\
&=\int_Y \left( \int_X u(x) \kappa_X\bigl(\overline{F}(y),x\bigr)dx \right) \kappa_Y\bigl(F(x_0),y\bigr)\,dy - u(x_0)\\
 &= (F^*\circ \overline{F}^*) u(x_0)-u(x_0).
\end{align*}
Therefore, $B_0\circ d_0=F^*\circ \overline{F}^* -Id$.

Now we take $u\in AS_\phi^{k+1}(X)$. First observe that
\begin{align*}
(d_k \circ B_k) u(\Delta) &= B_ku(\partial \Delta)=B_ku\left(\sum_{i=0}^k (-1)^i\partial_i \Delta\right)=\sum_{i=0}^k(-1)^i B_ku(\partial_i\Delta)\\
&= \sum_{i=0}^k (-1)^i \int_{X^{k+1}} u\bigl(b(\partial_i\Delta,\Delta')\bigr)\kappa(\partial_i\Delta,\Delta')\,d\Delta'. 
\end{align*}
By Lemma \ref{lemaGenton}, $(B_{k+1}\circ d_{k+1}) u(\Delta)$ is equal to
\begin{align*}
\int_{X^{k+2}} u(\Delta')\kappa(\Delta,\Delta')\,d\Delta'-u(\Delta)-\sum_{i=0}^k(-1)^i\int_{X^{k+2}} u\bigl(b(\partial_i\Delta,\partial_i\Delta' )\bigr)\kappa(\Delta,\Delta')\,d\Delta'.
\end{align*}
As in the case $k=0$, the first term is equal to $(F^*\circ \overline{F}^*)u(\Delta)$. With respect to the third term, for $\Delta=(x_0,\ldots,x_k)$ and $\Delta'=(x_0',\ldots,x_k')$, we have
\begin{align*}
\sum_{i=0}^k(-1)^i\int_{X^{k+2}} & u\bigl(b(\partial_i\Delta,\partial_i\Delta' )\bigr)\kappa(\Delta,\Delta')\,d\Delta'\\
&= \sum_{i=0}^k(-1)^i\int_{X^{k+1}} u\bigl(b(\partial_i\Delta,\partial_i\Delta' )\bigr)\kappa(\partial_i\Delta,\partial_i\Delta')\left(\int_X \kappa(x_i,x_i')dx_i'\right) d(\partial_i\Delta')\\
&= (d_k\circ B_k)u(\Delta)
\end{align*}
This shows that $B_{k+1}\circ d_{k+1}+B_k\circ d_k = F^*\circ \overline{F}^* -Id$ for every $k\geq 0$.
\end{proof}

\begin{remark}\label{obsIndependencia}
Observe that if $F_1,F_2:X\to Y$ are two quasi-isometries at bounded uniform distance, then a quasi-isometry $G:Y\to X$ is a quasi-inverse of $F_1$ if and only if it is a quasi-inverse of $F_2$. We have proven that in this case $F_1^*\circ G^*$ and $F_2^*\circ G^*$ are homotopy equivalent and $G^*$ is invertible. As a consequence, $F_1$ and $F_2$ induce the same isomorphism in (reduced) cohomology.
\end{remark}

\begin{remark}
Theorem \ref{MainInvariance} says that the asymptotic Orlicz cohomology of $(X,\mu)$ does not depend on the measure $\mu$. Thus, one can define such cohomology for any metric space admitting measures with bounded geometry. 

A metric condition that guarantees the existence of such a measure is a weak version of doubling condition for metric spaces: there exists a constant $\epsilon$ and a function $V:(0,+\infty)\to (0,+\infty)$ such that any $\epsilon$-separated set (i.e. set of point at mutual distance at least $\epsilon$) in a ball of radius $r$ cannot contain more than $V(r)$ points. From this condition one can take $\mu$ as the counting measure on a maximal $\epsilon$-separated discrete set in $X$. 

Observe that if $X$ is a doubling metric spaces, the function $V$ can be taken with polynomial growth at $\infty$ (see Sections 1.3.1 and 1.4.1 in \cite{MT}).  
\end{remark}

\section{Continuous group Orlicz cohomology}

Let $G$ be a locally compact second countable group equipped with a Haar measure $\mathcal{H}$ and a left invariant proper metric $|\cdot-\cdot|$. Fix a doubling Young function $\phi$. 

\begin{lemma}\label{Regular}
The right-regular representation of $G$ on $L^\phi (G)=L^\phi(G,\mathcal{H})$, 
$$\bigl(\pi(g)f\bigr)(x)=f(xg)$$
for every $f\in L^\phi (G)$ and $g,x\in G$, is well-defined and continuous.
\end{lemma}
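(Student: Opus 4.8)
The plan is to show two things: first that for each fixed $g\in G$ the operator $\pi(g)$ maps $L^\phi(G)$ into itself boundedly, and second that the action map $G\times L^\phi(G)\to L^\phi(G)$ is jointly continuous. For the first point, I would use left-invariance of the Haar measure together with the modular $\rho_\phi$ from \eqref{modular}. By the left-invariance of $\mathcal{H}$, the substitution $y=xg$ gives
$$\rho_\phi\bigl(\pi(g)f\bigr)=\int_G \phi\bigl(f(xg)\bigr)\,d\mathcal{H}(x)=\int_G \phi\bigl(f(y)\bigr)\,d\mathcal{H}(y)=\rho_\phi(f),$$
using that $x\mapsto xg$ is a measure-preserving bijection for a left Haar measure (right translations preserve the left Haar measure only up to the modular function of $G$; if $G$ is not unimodular one picks up the factor $\Delta_G(g)$, which is a finite positive constant, so by Remark \ref{equivNormas} this still gives boundedness with $\|\pi(g)f\|_\phi\le C(g)\|f\|_\phi$). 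In particular $\rho_\phi\bigl(\pi(g)f/\alpha\bigr)=\rho_\phi\bigl(\pi(g)(f/\alpha)\bigr)$ is controlled, so $\|\pi(g)f\|_\phi<\infty$ and $\pi(g)$ is a well-defined bounded linear operator; it is clearly a group homomorphism into $\mathrm{Aut}\bigl(L^\phi(G)\bigr)$.

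For joint continuity, the standard reduction is to continuity at the identity together with the (local) boundedness of the operators $\pi(g)$ just established: if $g_n\to g$ in $G$ and $f_n\to f$ in $L^\phi(G)$, write $\pi(g_n)f_n-\pi(g)f=\pi(g_n)(f_n-f)+\bigl(\pi(g_n)f-\pi(g)f\bigr)$, and the first term goes to zero once we know the operator norms of $\pi(g_n)$ stay bounded near $g$ (which follows from the modular-function computation, since $\Delta_G$ is continuous). So it suffices to prove that $g\mapsto \pi(g)f$ is continuous for each fixed $f\in L^\phi(G)$. Here I would invoke Lemma \ref{LemaCompSupp}: since $\phi$ is doubling, $G$ with a left-invariant proper metric is a proper metric space and $\mathcal{H}$ is Radon, so $C_c(G)$ is dense in $L^\phi(G)$. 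For $f\in C_c(G)$ right-translation continuity $\|\pi(g)f-f\|_\phi\to 0$ as $g\to e$ follows from uniform continuity of $f$ and the fact that all the translates $\pi(g)f$ for $g$ in a compact neighborhood of $e$ are supported in one fixed compact set $L$, so $\|\pi(g)f-f\|_\phi\le \phi^{-1}\bigl(1/\mathcal{H}(L)\bigr)^{-1}\cdot\|\pi(g)f-f\|_\infty$-type estimate — concretely, a uniformly small sup-norm difference on a fixed-measure set forces the Luxembourg norm to be small (one checks $\rho_\phi$ of the difference over a set of finite measure tends to $0$ and applies Proposition \ref{PropDoblante}(ii) with $f\equiv 0$). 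The general case then follows by the usual $\varepsilon/3$ density argument, using the uniform boundedness of $\pi(g)$ near $e$.

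The main obstacle is handling the non-unimodular case cleanly: right translation does not preserve a left Haar measure, so one must track the modular function $\Delta_G$, verify it is bounded on compact sets (true, since it is a continuous homomorphism $G\to\R_{>0}$), and feed the resulting constant into Remark \ref{equivNormas} to keep $\pi(g)$ bounded with locally bounded norm; this is exactly what makes the $\varepsilon/3$ argument for joint continuity go through. A secondary technical point is the passage from a uniform sup-norm bound on a fixed finite-measure set to smallness in the Luxembourg norm, which needs the doubling hypothesis via Proposition \ref{PropDoblante}; without doubling one would only get convergence for the weaker topology, so it is essential to record where the doubling assumption on $\phi$ enters.
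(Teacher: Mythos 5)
Your proposal is correct and follows essentially the same route as the paper: the identity $\rho_\phi\bigl(\pi(g)f\bigr)=\Delta(g)\rho_\phi(f)$ for well-definedness, the two-term splitting $\pi(g_n)f_n-\pi(g)f=\pi(g_n)(f_n-f)+\bigl(\pi(g_n)f-\pi(g)f\bigr)$, density of compactly supported continuous functions via Lemma \ref{LemaCompSupp}, and Proposition \ref{PropDoblante} to pass from modular to norm convergence (the paper handles the middle term by dominated convergence rather than your uniform-continuity estimate, a negligible difference). Only cosmetic point: your first displayed equality $\rho_\phi\bigl(\pi(g)f\bigr)=\rho_\phi(f)$ holds only in the unimodular case and should be stated with the factor $\Delta(g)$ from the outset, as you yourself note parenthetically.
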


\begin{proof}
First observe that if $g\in G$, then $\rho_\phi\bigl(\pi(g) u\bigr)=\Delta(g) \rho_\phi(u)$, where $\Delta$ is the modular function associated to $\mathcal{H}$. Hence, the representation is well-defined. 

To prove continuity, consider $g_n\to g$ in $G$ and $f_n\to f$ in $L^\phi(G)$. Observe that 
$$\|\pi(g_n)f_n-\pi(g)f\|_\phi\leq \|\pi(g_n)f_n-\pi(g_n)f\|_\phi+\|\pi(g_n)f-\pi(g)f\|_\phi,$$
where, by Proposition \ref{PropDoblante}, the first term of the right-hand side converges to $0$ because
$$\rho_\phi\bigl(\pi(g_n)f_n-\pi(g_n)f\bigr)=\rho_\phi\bigl(\pi(g_n)(f_n- f)\bigr)=\Delta(g_n)\rho_\phi(f_n-f)\to 0.$$

The second term can be bounded as follows:
\begin{equation*}\label{desc}
\|\pi(g_n) f- \pi(g)f\|_\phi\leq \|\pi(g_n)f- \pi(g_n)\tilde{f}\|_\phi+\|\pi(g_n) \tilde{f}- \pi(g)\tilde{f}\|_\phi+\|\pi(g)\tilde{f}- \pi(g) f\|_\phi,   
\end{equation*}
where $\tilde{f}$ is continuous with compact support. By taking $\tilde{f}$ close enough to $f$ (see Lemma \ref{LemaCompSupp}), we can bound the first and third terms on the right-hand side. Moreover, since $\tilde{f}$ is continuous and $g_n\to g$, the sequence of functions
$$x\mapsto \phi\left(\left|\tilde{f}(x g_n)-\tilde{f}(x g)\right|\right)$$
converges pointwise to $0$. If $K\subset G$ is a compact neighborhood of $g$ such that $g_n\in K$ for every $n$, then these functions are bounded by
$$\phi\left(2\max \bigl(\tilde{f}\bigr)\right) \mathbbm{1}_E,$$
with $E=supp(\tilde{f})K^{-1}$. Therefore, the Dominated Convergence Theorem implies that $$\rho_\phi\bigl(\pi(g_n) \tilde{f}-\pi(g)\tilde{f}\bigr)\to 0,$$ and hence, by Proposition \ref{PropDoblante}, $\|\pi(g_n)\tilde{f}-\pi(g)\tilde{f}\|_\phi\to 0$.
\end{proof}

From the right-regular representation $\pi$ we can consider the (reduced) continuous cohomology of $G$ with coefficients in $\bigl(\pi,L^\phi(G)\bigr)$, which we also call \textit{(reduced) continuous $L^\phi$-cohomology of $G$} and denote by $H^*\bigl(G,L^\phi(G)\bigr)$ and $\overline{H}^*\bigl(G,L^\phi(G)\bigr)$.

\begin{remark}
Since $G$ is locally compact and second countable, it can be represented as a union of an increasing sequence of compact subsets $\{K_n\}$. Thus, $C\bigl(G^{k+1},L^\phi(G)\bigr)$ is a Fr\'echet space for the family of semi-norms
$$\|\omega\|_{K_n}=\max\{\|\omega(x_0,\ldots,x_k)\|_{\phi} : x_j\in K_n\text{ for every }j\}.$$
The continuity of the representation $\pi$ implies that $C\bigl(G^{k+1},L^\phi(G)\bigr)^G$ is a closed subspace of $C\bigl(G^{k+1},L^\phi(G)\bigr)$ and hence a Fr\'echet space. We conclude that the reduced cohomology space $\overline{H}^{k+1}\bigl(G,L^\phi(G)\bigr)$ is also a Fr\'echet space. 

\end{remark}

In this section, we prove Theorem \ref{MainEquivalence} by showing that the complex $\Bigl(C\bigl(G^{*+1},L^\phi(G)\bigr)^G,d\Bigr)$ is homotopy equivalent to $\bigl(AS^*_\phi(G),d\bigr)$. For this, we construct a relatively injective strong $G$-resolution of $L^\phi(G)$ such that the associated $G$-invariant complex is homotopy equivalent to the complex $\bigl(AS^*_{\phi}(G),d\bigr)$ and use Propositions \ref{PropSR} and \ref{PropEq}. 

\medskip

Given two proper metric spaces $X$ and $Y$ equipped with Radon 
measures $\mu_X$ and $\mu_Y$ and a doubling Young
function $\phi$, denote by $\mathbb{L}^\phi_{loc}(X,Y)$ the space 
of (classes) of Borel real functions $f$ on $X\times Y$ such that $f|_{K\times Y}\in L^\phi(K\times Y)$
for every compact set $K\subset X$. Endow $\mathbb{L}^\phi_{loc}(X,Y)$ 
with the family of semi-norms
\begin{equation}\label{seminorms}
\|f\|_{\phi,K}=\inf\left\{ \alpha>0 : \rho_{\phi,K}\left(\frac{f}{\alpha}\right)\leq 1  \right\},\ \rho_{\phi,K}\left(f\right)=\int_K\int_Y\phi\left(f\right)\,d\mu_Yd\mu_X,     
\end{equation}
for $K\subset X$ compact. Observe that $\|\ \|_{\phi,K}$ is the norm on the space $L^\phi(K\times Y)$.

Since $X$ is proper, it can be represented as the union of an increasing sequence of compact subsets $K_n$. Thus $\mathbb{L}^\phi_{loc}(X,Y)$ is the inverse limit of the sequence of the Banach spaces $L^\phi(K_n\times Y)$, which implies that it is a Fr\'echet space (using again \cite[Section 3.3.3]{Gol}).



We study in more detail the case where $Y=G$, assuming that $G$ acts on $X$ preserving the measure $\mu_X$.  

\begin{lemma}\label{Aproximacion}
The space of continuous functions with compact support on $X\times G$, denoted by $C_0(X\times G)$, is dense in $\Loc(X,G)$.
\end{lemma}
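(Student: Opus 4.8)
The plan is to prove density by a standard two-step truncation-and-mollification argument, using the doubling hypothesis on $\phi$ (via Proposition \ref{PropDoblante}) so that convergence in the Fréchet topology can be checked modularly on each slab $K_n\times G$.

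First I would reduce to approximating a fixed $f\in\Loc(X,G)$ by functions that are already integrable on the whole of $X\times G$ and, in fact, supported in a set of the form $K\times (\text{compact})$. For this, fix one of the exhausting compacts $K=K_n\subset X$. Since $f|_{K\times G}\in L^\phi(K\times G)$ and $K\times G$ is a proper metric space with a Radon measure, Lemma \ref{LemaCompSupp} gives continuous compactly supported functions on $K\times G$ converging to $f|_{K\times G}$ in $L^\phi(K\times G)$, i.e.\ in the seminorm $\|\ \|_{\phi,K}$. The subtlety is that we need a \emph{single} sequence in $C_0(X\times G)$ converging to $f$ in \emph{all} seminorms simultaneously; this is handled by a diagonal argument over the exhaustion $\{K_n\}$, together with the fact that the seminorms are increasing ($\|\ \|_{\phi,K_m}\le \|\ \|_{\phi,K_n}$ for $m\le n$), so that a function approximating $f$ well on $K_n\times G$ automatically approximates it well on $K_m\times G$ for all $m\le n$.

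Concretely, for each $n$ I would first replace $f$ by $f\,\mathds{1}_{K_n\times G}$; these converge to $f$ in every seminorm $\|\ \|_{\phi,K_m}$ because for $m\le n$ the two functions agree on $K_m\times G$. Then, on the Banach space $L^\phi(K_n\times G)$, apply Lemma \ref{LemaCompSupp} to choose $g_n\in C_0(K_n\times G)$ with $\|g_n-f\|_{\phi,K_n}\le 1/n$. Extend $g_n$ to a function on $X\times G$: one must be slightly careful that the extension remains continuous and compactly supported on all of $X\times G$, which is arranged by first multiplying $g_n$ by a continuous cutoff $\chi_n$ on $X$ that is $1$ on a neighbourhood of $\supp g_n\cap X$-slice inside $\inte K_{n}$ and $0$ outside $K_n$ — using that $K_n\subset\inte K_{n+1}$ in a standard exhaustion — so the product extends by zero to an element of $C_0(X\times G)$. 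The resulting sequence $(\tilde g_n)$ then satisfies, for each fixed $m$, $\|\tilde g_n-f\|_{\phi,K_m}\le\|g_n-f\|_{\phi,K_n}\le 1/n\to 0$ once $n\ge m$, which is exactly convergence in $\Loc(X,G)$.

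The main obstacle, and the only place real care is needed, is the interplay between the \emph{local-in-$X$ but global-in-$G$} nature of the space: one is approximating in $L^\phi(K\times G)$ where the $G$-factor is non-compact, so the approximants genuinely must decay in the $G$-direction, and this is precisely what Lemma \ref{LemaCompSupp} supplies (it applies to the proper space $K\times G$ with its product Radon measure). The doubling hypothesis enters only to guarantee, via Proposition \ref{PropDoblante}, that simple functions and hence $C_0$ functions are dense in $L^\phi(K\times G)$ — without it, $C_0$ need not be dense in $L^\phi$. Everything else — the cutoffs, the extension by zero, the diagonalization over the exhaustion — is routine once the correct order of quantifiers is set up. I would also remark that the $G$-action preserving $\mu_X$ plays no role in this particular lemma; it is recorded here only because the space $\Loc(X,G)$ will later be equipped with a $G$-representation.
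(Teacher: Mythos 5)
Your argument is correct and is essentially the paper's own proof: exhaust $X$, apply the density of $C_0$ (Lemma \ref{LemaCompSupp}, hence the doubling hypothesis) on each slab to get $f_n$ with $\rho_{\phi}$-error at most $1/n$ there, extend by zero, and conclude convergence in every seminorm from the monotonicity $\|\cdot\|_{\phi,K_m}\le\|\cdot\|_{\phi,K_n}$ once $K_m\subset K_n$, exactly as in the paper. The only divergence is cosmetic: the paper exhausts $X$ by \emph{open} balls $B_n$ (with $\overline{B_n}$ compact), so elements of $C_0(B_n\times G)$ extend by zero to continuous functions on $X\times G$ with no cutoff at all — which also sidesteps the one imprecise point in your write-up, namely that a cutoff cannot equal $1$ on a neighbourhood of $\supp g_n$ while vanishing outside $K_n$ when that support reaches $\partial K_n$, so with compact slabs you would additionally have to control the boundary error introduced by the cutoff (or arrange $\mu_X(\partial K_n)=0$), a fixable but avoidable complication.
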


\begin{proof}
We write $X=\bigcup_{n\in \N} B_n$ with $B_n=B(x_0,n)$. Since $X$ is proper, $\overline{B_n}$ is compact. 


Take $f\in\Loc(X,G)$. Observe that $f|_{B_n\times G}\in L^\phi(B_n\times G)$ for every $n$. Since $C_0(B_n\times G)$ is dense in $L^\phi(B_n\times G)$, for every $n$ we can take $f_n\in C_0(B_n\times G)$ such that 
$$\int_{B_n}\int_G \phi\bigl(|f_n-f|\bigr)\,d\Hh\, d\mu_X\leq \frac{1}{n}.$$
We can extend $f_n$ to the whole $X\times G$ by zero.

Given a compact set $K\subset X$, there exists $n_0$ such that for every $n\geq n_0$ we have $K\subset B_n$, and as a consequence
$$\int_{K}\int_G \phi\bigl(|f_n-f|\bigr)\,d\Hh\, d\mu_X\leq \int_{B_n}\int_G \phi\bigl(|f_n-f|\bigr)\,d\Hh\, d\mu_X\leq \frac{1}{n}.$$
Thus, $\|f_n-f\|_{\phi,K}\to 0$ for every compact set $K\subset X$.
\end{proof}

\begin{lemma}
$\mathbb{L}^\phi_{loc}(X,G)$ is a $G$-module for the representation given by
$$(g\cdot f)(x,h)=f(g^{-1}x,hg).$$
\end{lemma}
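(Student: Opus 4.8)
The plan is to verify directly that the proposed formula $(g\cdot f)(x,h)=f(g^{-1}x,hg)$ defines a continuous linear action of $G$ on $\mathbb{L}^\phi_{loc}(X,G)$. There are four things to check: that the action is well-defined (i.e. $g\cdot f$ again lies in $\mathbb{L}^\phi_{loc}(X,G)$ whenever $f$ does), that it is a group action, that each $g\cdot(\ )$ is a continuous linear map, and that the joint map $G\times \mathbb{L}^\phi_{loc}(X,G)\to\mathbb{L}^\phi_{loc}(X,G)$ is continuous. Linearity and the cocycle identity $g_1\cdot(g_2\cdot f)=(g_1g_2)\cdot f$ are immediate from the formula (a one-line substitution), so the real content is in the two continuity/boundedness statements.

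For well-definedness and for the continuity of the individual operators, the key computation is a change of variables in the modular. Fix $g\in G$ and a compact set $K\subset X$. Then, using that $G$ acts on $X$ preserving $\mu_X$ and that right translation on $G$ scales Haar measure by the modular function $\Delta$,
\begin{align*}
\rho_{\phi,K}(g\cdot f)&=\int_K\int_G \phi\bigl(f(g^{-1}x,hg)\bigr)\,d\mathcal{H}(h)\,d\mu_X(x)\\
&=\Delta(g)\int_{g^{-1}K}\int_G \phi\bigl(f(x,h)\bigr)\,d\mathcal{H}(h)\,d\mu_X(x)=\Delta(g)\,\rho_{\phi,g^{-1}K}(f).
\end{align*}
Since $g^{-1}K$ is again compact and $\Delta(g)$ is a finite positive constant, $f|_{g^{-1}K\times G}\in L^\phi$ forces $g\cdot f|_{K\times G}\in L^\phi$; this proves $g\cdot f\in\mathbb{L}^\phi_{loc}(X,G)$. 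Combining the displayed identity with Remark~\ref{equivNormas} (and the scaling of the Luxembourg norm under the substitution $\phi\mapsto\Delta(g)\phi$) gives $\|g\cdot f\|_{\phi,K}\le C_g\|f\|_{\phi,g^{-1}K}$ for a constant $C_g$ depending only on $g$; hence each operator is a continuous linear map on the Fréchet space $\mathbb{L}^\phi_{loc}(X,G)$.

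The main obstacle is joint continuity at an arbitrary pair $(g_0,f_0)$. The strategy mirrors the proof of Lemma~\ref{Regular}: split
\[
\|g\cdot f-g_0\cdot f_0\|_{\phi,K}\le \|g\cdot(f-f_0)\|_{\phi,K}+\|g\cdot f_0-g_0\cdot f_0\|_{\phi,K},
\]
and control the first term uniformly for $g$ in a fixed compact neighborhood $N$ of $g_0$ using the bound from the previous paragraph together with the fact that $\bigcup_{g\in N}g^{-1}K$ is relatively compact (so its closure $K'$ is compact and $\|g\cdot(f-f_0)\|_{\phi,K}\le (\sup_{g\in N}C_g)\|f-f_0\|_{\phi,K'}\to 0$). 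For the second term, one first approximates $f_0$ by a continuous compactly supported function $\tilde f$ on $X\times G$ (Lemma~\ref{Aproximacion}), reducing to showing $g\cdot\tilde f\to g_0\cdot\tilde f$ in $\mathbb{L}^\phi_{loc}(X,G)$; since $\tilde f$ is continuous with compact support, the functions $(x,h)\mapsto\phi\bigl(|\tilde f(g^{-1}x,hg)-\tilde f(g_0^{-1}x,hg_0)|\bigr)$ converge to $0$ pointwise as $g\to g_0$ and are uniformly dominated (for $g\in N$) by $\phi\bigl(2\max(\tilde f)\bigr)\mathbbm{1}_{K\times E}$ for a suitable compact set $E\subset G$, so the Dominated Convergence Theorem gives $\rho_{\phi,K}(g\cdot\tilde f-g_0\cdot\tilde f)\to 0$, and Proposition~\ref{PropDoblante}(ii) converts this into norm convergence. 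Feeding the estimates for $\|f_0-\tilde f\|$ back through the triangle inequality completes the argument. The only delicate points are bookkeeping the compact sets under the $G$-action and invoking the doubling hypothesis on $\phi$ exactly where Proposition~\ref{PropDoblante} is needed; both are routine given the earlier lemmas.
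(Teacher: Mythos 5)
Your argument is correct and follows essentially the same route as the paper: the change-of-variables identity $\rho_{\phi,K}(g\cdot f)=\Delta(g)\,\rho_{\phi,g^{-1}K}(f)$ for well-definedness, then continuity via the triangle-inequality split, uniform control of the modular on a compact neighborhood of $g_0$, approximation by $C_0(X\times G)$ (Lemma~\ref{Aproximacion}), the Dominated Convergence Theorem, and Proposition~\ref{PropDoblante} to pass from modular to norm convergence. The only (harmless) differences are that you make the group-action and single-operator continuity checks explicit and phrase the split around $(g_0,f_0)$ rather than along sequences, where the paper invokes metrizability to reduce to sequential continuity.
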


\begin{proof} By analogy with the proof of Lemma \ref{Regular}, for every $f\in\mathbb{L}^\phi_{loc}(X,G)$, $g\in G$ and a compact set $K\subset X$, $\rho_{\phi,K}(g\cdot f)=\Delta(g)\rho_{\phi,g^{-1}K}(f)$. Therefore, the representation is well-defined.

Continuity is proven following the argument in the proof of Lemma \ref{Regular}. Indeed, since $G$ and $\mathbb{L}_{loc}^\phi(X,G)$ are both metrizable spaces, it is enough to prove that if $g_n\to g$ in $G$ and $f_n\to f$ in $\mathbb{L}_{loc}^\phi(X,G)$, then $g_n\cdot f_n\to g\cdot f$ in $\mathbb{L}_{loc}^\phi(X,G)$.

Fix a compact set $K\subset X$. Since $\|\ \|_{\phi,K}$ is a semi-norm, we have
\begin{equation}\label{triang}
\|g_n\cdot f_n-g\cdot f\|_{\phi,K}\leq \|g_n\cdot f_n-g_n\cdot f\|_{\phi,K}+\|g_n\cdot f-g\cdot f\|_{\phi,K}.
\end{equation}
Consider a compact neighborhood $V\subset G$ of $g$ and $n_0\in\N$ such that  $g_n\in V$ for every $n\geq n_0$.  The first term of the right-hand side in \eqref{triang} goes to $0$ as $n\to \infty$ because 
$$\rho_{\phi,K}(g_n\cdot f_n-g_n\cdot f)=\Delta(g_n)\rho_{\phi,g_n^{-1}K}(f_n-f)\leq\Delta(g_n)\rho_{\phi,V^{-1}K}(f_n-f)\to 0.$$
Here we use that $\Delta(g_n)\to \Delta(g)<\infty$ and $\rho_{\phi,V^{-1}K}(f_n-f)\to 0$.

The second term can be bounded as follows:
$$\|g_n\cdot f-g\cdot f\|_{\phi,K}\leq \|g_n\cdot f-g_n\cdot \tilde{f}\|_{\phi,K}+\|g_n\cdot \tilde{f}-g\cdot \tilde{f}\|_{\phi,K}+\|g\cdot \tilde{f}-g\cdot  f\|_{\phi,K},$$
where $\tilde{f}\in C_0(X\times G)$.
Taking $\tilde{f}$ closed enough from $f$ (which is possible because of Lemma \ref{Aproximacion}) we can bound the first and third term using the above argument. 

To see that the middle term goes to $0$ as $n\to\infty$, we can proceed in the same way as in Lemma \ref{Regular}. To do that we can take two compact sets $K_1\subset X$ and $K_2\subset G$ such that $supp(\tilde{f})\subset K_1\times K_2$ and dominate the function  
\begin{equation}\label{sequence}
(x,h)\mapsto \phi\left(\bigl|\tilde{f}(g_n^{-1}x,hg_n)-\tilde{f}(g^{-1}x,hg)\bigr|\right)
\end{equation}
by $\phi(2M)\mathbbm{1}_{E}$, where $E=(V K_1)\times (K_2 V^{-1})$. The proof is finished by applying the Dominated Convergence Theorem. 

\end{proof}

From now on we take $X=G^{k+1}$ equipped with the product measure $\mathcal{H}^{k+1}$ and the maximum distance, which are preserved by the action of $G$ by left translations.

In the $L^p$-case the following lemma stems from Theorem 3.4 in \cite{B}. 

\begin{lemma}\label{RelInj}
$\mathbb{L}^\phi_{loc}(G^{n+1},G)$ is a relatively injective $G$-modulus for every $n\geq 0$.
\end{lemma}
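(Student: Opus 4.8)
The plan is to verify relative injectivity directly from the definition: given a strong $G$-injection $\iota:A\to B$ with continuous left inverse $\sigma:B\to A$ (so $\sigma\circ\iota=\mathrm{Id}_A$) and a $G$-morphism $\varphi:A\to \mathbb{L}^\phi_{loc}(G^{n+1},G)$, I must produce a $G$-morphism $\bar\varphi:B\to\mathbb{L}^\phi_{loc}(G^{n+1},G)$ with $\bar\varphi\circ\iota=\varphi$. The standard trick in continuous cohomology is to exploit the fact that $\mathbb{L}^\phi_{loc}(G^{n+1},G)$ is, up to a natural identification, a space of the form $C(\text{something},W)$ or an ``induced module'', so that relative injectivity reduces to an averaging/evaluation argument. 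Concretely, first I would identify $\mathbb{L}^\phi_{loc}(G^{n+1},G)$ with a space of (equivalence classes of) functions $G^{n+1}\to L^\phi(G)$ that are locally $L^\phi$ in the $G^{n+1}$-variable, where the $G$-action on the target $L^\phi(G)$ is the right-regular representation $\pi$ of Lemma \ref{Regular} and the action on the source $G^{n+1}$ is by left translation; then the module is ``co-induced'' from the $L^\phi(G)$-module along $G\curvearrowright G^{n+1}$.

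The key steps, in order: (1) Fix a base point, say $e^{(n+1)}=(e,\dots,e)\in G^{n+1}$, and consider the evaluation-type map; the $G$-action on $G^{n+1}$ by left translation is transitive on ``diagonal-like'' orbits but not on all of $G^{n+1}$, so instead I would use the structure $G^{n+1}\cong G\times G^n$ via $(x_0,\dots,x_n)\mapsto(x_0,x_0^{-1}x_1,\dots,x_0^{-1}x_n)$, under which left translation becomes translation in the first coordinate only. Under this change of variables and using $\rho_{\phi,K}(g\cdot f)=\Delta(g)\rho_{\phi,g^{-1}K}(f)$ together with the quasi-invariance computations already in the paper, $\mathbb{L}^\phi_{loc}(G^{n+1},G)^{G}$ becomes identified with $\mathbb{L}^\phi_{loc}(G^n,G)$ (no $G$-action), and more importantly $\mathbb{L}^\phi_{loc}(G^{n+1},G)$ itself becomes $C(G^n,\mathbb{L}^\phi_{loc}(\{*\},G))$-like, i.e.\ a co-induced module $\mathrm{CoInd}_{\{1\}}^{G}$ of the trivial-subgroup module $L^\phi(G)$ twisted appropriately. (2) Invoke the general principle (as in \cite[Ch.~III]{Gui} or \cite[Example 2.2]{BR}, whose argument for $C(G^{k+1},V)$ is exactly of this shape) that any co-induced module $C(G^{*},W)$ is relatively injective: given $\iota,\sigma,\varphi$ as above, define $(\bar\varphi\, b)(x) := \pi(\theta(x))\big[(\varphi\,\sigma\, b)(x)\big]$ after untwisting, or more simply define $\bar\varphi := $ (the map induced on co-induced modules by $\varphi\circ\sigma$ composed with the $G$-averaging built into the co-induction), and check $\bar\varphi\circ\iota=\varphi$ using $\sigma\circ\iota=\mathrm{Id}$ and $G$-equivariance of $\varphi$. (3) Verify $\bar\varphi$ is continuous: this uses that $\sigma$ and $\varphi$ are continuous and that the co-induction functor $C(G^n,-)$ preserves continuity, which follows from the Fréchet structure on $\mathbb{L}^\phi_{loc}$ established in the two preceding lemmas and the compact-exhaustion argument. (4) Verify $\bar\varphi$ is a $G$-morphism: immediate from the co-induced structure, since the $G$-action on a co-induced module is the translation action and $\bar\varphi$ is defined fibrewise.

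I expect the main obstacle to be Step (1): setting up the identification of $\mathbb{L}^\phi_{loc}(G^{n+1},G)$ as a genuine co-induced module in a way that is honest about the Orlicz/Fréchet topology and the modular function $\Delta$. In the $L^p$-case this is exactly what Bieri's Theorem 3.4 in \cite{B} packages for us, and the paper explicitly says ``In the $L^p$-case the following lemma stems from Theorem 3.4 in \cite{B}''; so the real content is checking that the doubling hypothesis on $\phi$ is enough to run Bieri's argument verbatim — the only places $p<\infty$ enters that proof are (a) density of $C_0$ (now Lemmas \ref{LemaCompSupp} and \ref{Aproximacion}), (b) the quasi-invariance identity $\rho_{\phi,K}(g\cdot f)=\Delta(g)\rho_{\phi,g^{-1}K}(f)$ (already established), and (c) continuity of the regular representation (Lemma \ref{Regular}). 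A secondary subtlety is that the natural homeomorphism $G^{n+1}\cong G\times G^n$ must be checked to carry Radon measures to Radon measures and compacts to compacts so that the $\mathbb{L}^\phi_{loc}$ structures correspond; this is routine since the map is a diffeomorphism-type bijection, bicontinuous, and the product measure transforms by the (smooth, locally bounded) modular factor. Once these three inputs are in place, the abstract nonsense of Steps (2)--(4) is formally identical to \cite[Example 2.2]{BR}, so I would keep the write-up of those steps brief and cite that argument.
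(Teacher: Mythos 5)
Your proposal does not contain the construction that carries the proof, and the reduction you lean on would not go through. The paper (following Blanc's Theorem 3.4 in \cite{B}) proves relative injectivity by writing the extension explicitly: one fixes a bounded, compactly supported $\chi\ge 0$ with $\int_G\chi(g^{-1})\,dg=1$ and sets
$$\bar{\varphi}(b)(x_0,\ldots,x_n,x)=\int_G \chi(g^{-1}x_0)\,\varphi\bigl(\beta(g^{-1}\cdot b)\bigr)(g^{-1}x_0,\ldots,g^{-1}x_n,xg)\,dg,$$
and then checks $\bar\varphi\circ\iota=\varphi$, $G$-equivariance, and continuity. Your main route --- identify $\mathbb{L}^\phi_{loc}(G^{n+1},G)$ as a co-induced module of the type $C(G^n,\cdot)$ and invoke the general principle that co-induced modules are relatively injective --- cannot work as stated: the engine of that principle (the argument of \cite[Example 2.2]{BR}) is pointwise evaluation/untwisting in the translated variable, and elements of $\mathbb{L}^\phi_{loc}$ are only defined almost everywhere, so evaluation at a point of $G^{n+1}$ is meaningless. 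That is exactly the obstruction which the averaging against $\chi$ is designed to remove, and it is the one idea your write-up never produces; the phrase ``the $G$-averaging built into the co-induction'' has no content for $C(G,W)$-type co-induction, whose relative injectivity proof involves no averaging at all. In this sense the reduction is also circular: after your change of variables $(x_0,\ldots,x_n)\mapsto(x_0,x_0^{-1}x_1,\ldots,x_0^{-1}x_n)$ you are still facing an $\mathbb{L}^\phi_{loc}$-induced (not continuously co-induced) module, whose relative injectivity is precisely the statement to be proved, by the Bruhat-function argument.

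Two further points. First, your Step (1) identification of $\mathbb{L}^\phi_{loc}(G^{n+1},G)$ with locally-$L^\phi$ maps $G^{n+1}\to L^\phi(G)$ is itself delicate: for general Orlicz functions the mixed-norm identification $L^\phi(K\times Y)\cong L^\phi\bigl(K;L^\phi(Y)\bigr)$ fails (there is no Fubini for Luxemburg norms), which is why the paper works throughout with the joint modular $\rho_{\phi,K}$. Second, the continuity of $\bar\varphi$ is not ``preserved by the co-induction functor''; in the paper it requires Jensen's inequality, the observation that $\chi(g^{-1}x_0)\neq 0$ with $x_0$ in the first projection $K_0$ of $K$ confines $g$ to the compact set $K_0\,\mathrm{supp}(\chi)^{-1}$, the auxiliary Lemma \ref{lemita} giving uniform smallness of $g\mapsto\Delta(g)\rho_{\phi,K}\bigl(\varphi(\beta(g^{-1}\cdot b))\bigr)$ over that compact set, and finally the doubling hypothesis to convert modular convergence into norm convergence. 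Your closing inventory of where the doubling condition and the representation-continuity enter is accurate as far as it goes (and the reference you call Bieri is Blanc), but it is a description of the inputs to Blanc's argument, not a proof; the explicit kernel $\chi$, the three verifications for $\bar\varphi$, and the compactness/continuity estimates are the substance of the lemma and are missing.
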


For proving Lemma \ref{RelInj}, we need the following lemma:

\begin{lemma}\label{lemita}
Let $G$ be a locally compact group, $X$ a topological space and $x_0\in X$. If $\eta:G\times X\to\R$ is continuous with $\eta(g,x_0)=0$ for every $g\in G$ and $K\subset G$ is compact, then
$$\lim_{x\to x_0}\bigl(\sup\{|\eta(g,x)| : g\in K\}\bigr) =0.$$
\end{lemma}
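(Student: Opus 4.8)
The plan is to prove Lemma~\ref{lemita} by a standard compactness argument, reducing the uniform statement over the compact set $K$ to pointwise continuity of $\eta$.

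First I would argue by contradiction: suppose the conclusion fails. Then there exist $\varepsilon_0>0$, a sequence $x_n\to x_0$ in $X$, and points $g_n\in K$ with $|\eta(g_n,x_n)|\geq\varepsilon_0$ for all $n$. Since $K$ is compact, I have two options depending on whether $G$ is assumed metrizable here; in general $K$ need not be sequentially compact, so the cleanest route is to work with nets rather than sequences, or to observe that in the applications (where $G$ is second countable) $K$ is metrizable. To keep the argument fully general I would instead phrase it with a subnet: after passing to a subnet, $g_n\to g_\infty$ for some $g_\infty\in K$. Then, since $(g_n,x_n)\to(g_\infty,x_0)$ in $G\times X$ and $\eta$ is continuous, $\eta(g_n,x_n)\to\eta(g_\infty,x_0)=0$, contradicting $|\eta(g_n,x_n)|\geq\varepsilon_0$.

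Alternatively, and perhaps more transparently, I would give the direct (non-contradiction) proof: fix $\varepsilon>0$. For each $g\in K$, by continuity of $\eta$ at $(g,x_0)$ and the fact that $\eta(g,x_0)=0$, there are open neighbourhoods $U_g\ni g$ in $G$ and $W_g\ni x_0$ in $X$ such that $|\eta(g',x)|<\varepsilon$ for $(g',x)\in U_g\times W_g$. The sets $\{U_g\}_{g\in K}$ cover $K$, so by compactness finitely many $U_{g_1},\dots,U_{g_m}$ suffice; put $W=\bigcap_{j=1}^m W_{g_j}$, an open neighbourhood of $x_0$. Then for $x\in W$ and any $g\in K$ we have $g\in U_{g_j}$ for some $j$ and $x\in W_{g_j}$, hence $|\eta(g,x)|<\varepsilon$; taking the supremum over $g\in K$ gives $\sup_{g\in K}|\eta(g,x)|\leq\varepsilon$ for all $x\in W$, which is precisely the claim.

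There is no real obstacle here: the only point requiring a little care is not to invoke sequential compactness of $K$ when $G$ is merely assumed locally compact (not metrizable), which is why the finite-subcover version is preferable as the written proof. I would present the second argument as the main proof, since it uses only the definition of continuity and the finite-subcover property of the compact set $K$, and requires no countability hypothesis.
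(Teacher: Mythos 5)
Your proposal is correct, and your preferred argument differs from the paper's. The paper proves the lemma by contradiction using nets: assuming the conclusion fails, it extracts, for every neighborhood $V$ of $x_0$, points $x_V\in V$ and $g_V\in K$ with $|\eta(g_V,x_V)|\geq\epsilon$, passes to a convergent subnet $g_U\to g\in K$ by compactness, and derives $|\eta(g,x_0)|\geq\epsilon$, contradicting $\eta(g,x_0)=0$ --- essentially your ``alternative'' route, and indeed it needs subnets rather than sequences for exactly the reason you flag (no metrizability of $K$ is assumed). Your main argument is instead the direct tube-lemma style proof: cover $K$ by finitely many product neighborhoods $U_{g_j}\times W_{g_j}$ on which $|\eta|<\varepsilon$, intersect the $W_{g_j}$, and read off the uniform bound. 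Both are valid in full generality; yours is more elementary (no nets or subnets, only the definition of continuity and finite subcovers) and has the mild advantage of producing an explicit neighborhood $W$ of $x_0$ witnessing the estimate, while the paper's net argument is shorter to state once one is comfortable with subnets. Your care in not invoking sequential compactness of $K$ is exactly the right scruple here.
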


\begin{proof}
Let $\mathcal{V}$ be the family of neighborhoods of $x_0\in X$. We need to prove that given $\epsilon>0$ there exists $V\in\mathcal V$ such that for every $x\in V$, 
$$\sup\{|\eta(g,x)| : g\in K\}<\epsilon.$$

Suppose this fails, then there exists $\epsilon>0$ such that for every $V\in \mathcal{V}$ there are $x_V\in V$ and $g_V\in K$ with $\eta(g_V,x_V)\geq\epsilon$. Since $K$ is compact, the net $\{g_V\}_{V\in\mathcal{V}}$ has a convergent subnet $\{g_U\}$ to $g\in K$. The net $\{x_U\}$ converges to $x_0$, thus, by continuity of $\eta$, we have $\eta(g,x_0)\geq\epsilon$, which is a contradiction. 
\end{proof}

\begin{proof}[Proof of Lemma \ref{RelInj}]
Let $\iota:A\to B$ be a strong $G$-injection between $G$-modules, $\beta:B\to A$ its left-inverse, and $\varphi:A\to\mathbb{L}^\phi_{loc}(G^{n+1},G)$ a $G$-morphism. We need to prove that there exists a $G$-morphism $\bar{\varphi}:B\to\mathbb{L}^\phi_{loc}(G^{n+1},G)$ such that $\bar{\varphi}\circ \iota =\varphi$.

Let $\chi:G\to\R$ be a non-negative and bounded function with compact support such that 
\begin{equation}\label{chi}
\int_G \chi(g^{-1})\,dg=1.
\end{equation}

If $b\in B$, we define
\begin{equation}
\bar{\varphi}(b) (x_0,\ldots,x_n,x)=\int_G \chi(g^{-1}x_0)\varphi\bigl(\beta(g^{-1}\cdot b)\bigr)(g^{-1}x_0,\ldots,g^{-1}x_n,xg)\,dg.     
\end{equation}
Using that $\iota$ and $\varphi$ are $G$-equivariant, the identity $\beta\circ\iota=\mathrm{Id}_A$ and \eqref{chi}, it is easy to see that $\bar{\varphi}\circ\iota=\varphi$. 

Let us prove now that $\bar{\varphi}$ is $G$-equivariant: for $h\in G$, 
\begin{align*}
\bar{\varphi}(h\cdot b)(x_0,\ldots,x_n,x)=\int_G \chi(g^{-1}x_0)\varphi\bigl(\beta(g^{-1}h\cdot b)\bigr)(g^{-1}x_0,\ldots,g^{-1}x_n,xg)\,dg.
\end{align*}
Putting $h^{-1}g=\tilde{g}$, we have
\begin{align*}
\bar{\varphi}(h\cdot b)(x_0,\ldots,x_n,x)
&= \int_G \chi(\tilde{g}^{-1}h^{-1}x_0)\varphi\bigl(\beta(\tilde{g}^{-1}\cdot b)\bigr)(\tilde{g}^{-1}h^{-1}x_0,\ldots,\tilde{g}^{-1}h^{-1}x_n,xh\tilde{g})\, d\tilde{g}\\
&=\bar{\varphi}(b)(h^{-1}x_0,,\ldots,h^{-1}x_n,xh)=\bigl( h\cdot \bar{\varphi}(b)\bigr)(x_0,\ldots,x_n,x).
\end{align*}


To prove that $\bar{\varphi}$ is continuous, first observe that, by Jensen's inequality, 
\begin{align*}
\phi\bigl(\bar{\varphi}(b)(x_0,\ldots,x_n,x)\bigr)
&\leq \int_G \phi\Bigl(\varphi\bigl(\beta(g^{-1}\cdot b)\bigr)(g^{-1}x_0,\ldots,g^{-1}x_n,xg) \Bigr) \chi(g^{-1}x_0)\, dg.
\end{align*}
Therefore, if $K\subset G^{n+1}$ is a compact subset, we have
\begin{align*}
&\rho_{\phi,K}\bigl(\bar{\varphi}(b)\bigr) =\int_G \int_K   \phi\bigl(\bar{\varphi}(b)(x_0,\ldots,x_n,x)\bigr)\, dx_0\ldots dx_n\, dx \\
&\leq \int_G\int_K\int_G \phi\Bigl(\varphi\bigl(\beta(g^{-1}\cdot b)\bigr)(g^{-1}x_0,\ldots,g^{-1}x_n,xg)\Bigr) \chi(g^{-1}x_0) \,dg\,dx_0\ldots dx_n\,dx.
\end{align*}
Let $K_0\subset G$ be the projection of $K$ on the first coordinate. Observe that if $x_0\in K_0$ and $\chi(g^{-1}x_0)\neq 0$, then $g\in \tilde{K}=K_0\,supp(\chi)^{-1}$, which is a compact set. If $C=\sup(\chi)$, we have 
\begin{align*}
\rho_{\phi,K}(\bar{\varphi}(b)) &\leq C \int_{\tilde{K}}  \int_G\int_K \phi\Bigl(\varphi\bigl(\beta(g^{-1}\cdot b)\bigr)(g^{-1}x_0,\ldots,g^{-1}x_0,xg) \Bigr)\,  dx_0\ldots dx_n\, dx\,  dg\\
& = C\int_{\tilde{K}} \left( \Delta(g) \int_G\int_K \phi\Bigl(\varphi\bigl(\beta(g^{-1}\cdot b)\bigr)(x_0,\ldots,x_n,x) \right)  dx_0\ldots dx_n\,dx  \Bigr)dg\\
&= C\int_{\tilde{K}} \Delta(g)\rho_{\phi,K}\Bigl(\varphi\bigl(\beta (g^{-1}\cdot b)\bigr)\Bigr) dg.
\end{align*}
Since the representations and the maps $\beta$, $\varphi$, $\rho_{\phi,K}$ and $\Delta$ are continuous, the function
$$\Psi_b:G\to\R,\quad  g\mapsto \Delta(g)\rho_{\phi,K}\Bigl(\varphi\bigl(\beta (g^{-1}\cdot b)\bigr)\Bigr),$$
is 
continuous and hence $\rho_{\phi,K}(\bar{\varphi}(b))<+\infty$. We conclude that $\bar{\varphi}(b)\in \mathbb{L}^\phi_{loc}(G^{n+1},G)$.

Moreover, by Lemma \ref{lemita} applied to the function 
$\eta:G\times B\to \R,\ \eta(g,b)=\Psi_b(g)$, we have
$$\rho_{\phi,K}\bigl(\bar{\varphi}(b-b_0)\bigr)\to 0 \text{ as } b\to b_0,$$
which implies that $\|\bar{\varphi}(b)-\bar{\varphi}(b_0)\|_{\phi,K}\to 0$ as $b\to b_0$ because $\phi$ is doubling. Since $K\subset G$ is any compact set, we conclude that $\bar{\varphi}$ is continuous.

\end{proof}

Consider the~complex of~Fr\'echet $G$-modules
\begin{equation}\label{comp-res}
0\rightarrow L^\phi(G) \overset{\delta_{-1}}{\to} \mathbb{L}^\phi_{loc}(G,G)
\overset{\delta_{0}}{\to} \mathbb{L}^\phi_{loc}(G^2,G) 
\overset{\delta_{1}}{\to} \mathbb{L}^\phi_{loc}(G^3,G) \overset{\delta_{2}}{\to} 
\cdots \,,  
\end{equation}
where
$$
(\delta_k f)(x_0,\dots,x_{k+1},g) 
= \sum_{i=0}^{k+1} (-1)^i f(x_0,\dots,\hat{x}_i,\dots,x_{k+1},g).
$$
This complex is an Orlicz version of the resolution $L^p_{loc}(G^{*+1},L^p(G))$ considered in \cite{BR}. In general, Blanc shows in \cite{B} that $L^p_{loc}(G^{*+1},V)$ is a relatively injective strong $G$-resolution for every $G$-module $V$.  

\begin{lemma}\label{dgroups-cont}
For every $k\geq 0$, the operator $\delta=\delta_k$ is a $G$-morphism from
$\mathbb{L}^\phi_{loc}(G^{k+1},G)$ to $\mathbb{L}^\phi_{loc}(G^{k+2},G)$.
Moreover, $\delta^2=0$.
\end{lemma}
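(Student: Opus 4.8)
The plan is to verify both assertions of Lemma~\ref{dgroups-cont} by direct computation, reducing them to elementary properties of the alternating-sum operator $\delta_k$ and the already-established fact that $\mathbb{L}^\phi_{loc}(G^{k+1},G)$ is a $G$-module (so that the $G$-action is available and well-defined).

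\textbf{Well-definedness and continuity of $\delta_k$.} First I would check that $\delta_k$ actually maps $\mathbb{L}^\phi_{loc}(G^{k+1},G)$ into $\mathbb{L}^\phi_{loc}(G^{k+2},G)$ and is continuous for the families of seminorms \eqref{seminorms}. Fix $f\in\mathbb{L}^\phi_{loc}(G^{k+1},G)$ and a compact set $K\subset G^{k+2}$. For each $i$ the function $(x_0,\dots,x_{k+1},g)\mapsto f(x_0,\dots,\hat x_i,\dots,x_{k+1},g)$ restricted to $K\times G$ is, up to a permutation of coordinates, a slice of $f$ over the compact set $\pi_i(K)\subset G^{k+1}$ obtained by deleting the $i$-th coordinate; since $f|_{\pi_i(K)\times G}\in L^\phi$, each summand lies in $L^\phi(K\times G)$. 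Using convexity of $\phi$ (Jensen's inequality, exactly as in the proof of Proposition~\ref{Cont}) one gets
$$\int_K\int_G\phi\bigl((\delta_k f)(x_0,\dots,x_{k+1},g)\bigr)\,dg\,dx\;\leq\;\frac{1}{k+2}\sum_{i=0}^{k+1}\int_K\int_G\phi\bigl((k+2)f(x_0,\dots,\hat x_i,\dots,x_{k+1},g)\bigr)\,dg\,dx,$$
and each integral on the right is finite by the previous observation together with $\mathcal H$-finiteness of compact sets; bounding the $x$-integral over $K$ by the integral over a product of $k+1$ compact projections and applying Remark~\ref{equivNormas} gives $\|\delta_k f\|_{\phi,K}\preceq\sum_i\|f\|_{\phi,K_i}$ for suitable compact $K_i\subset G^{k+1}$, which is exactly the continuity of $\delta_k$ between these Fr\'echet spaces.

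\textbf{$G$-equivariance.} Next I would verify $\delta_k(g\cdot f)=g\cdot(\delta_k f)$. Writing out $(g\cdot f)(x_0,\dots,x_{k+1},h)=f(g^{-1}x_0,\dots,g^{-1}x_{k+1},hg)$ and applying $\delta_k$ termwise, the $i$-th term of $\delta_k(g\cdot f)$ equals $(-1)^i f(g^{-1}x_0,\dots,\widehat{g^{-1}x_i},\dots,g^{-1}x_{k+1},hg)$, which is precisely $(-1)^i$ times the $i$-th term of $\delta_k f$ evaluated at $(g^{-1}x_0,\dots,g^{-1}x_{k+1},hg)$; summing over $i$ gives $\bigl(g\cdot(\delta_k f)\bigr)(x_0,\dots,x_{k+1},h)$. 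This is a purely formal manipulation: the left translation in the $G^{k+1}$-variable commutes with the omission of a coordinate, and the $hg$-twist in the last variable is untouched by $\delta_k$, which acts only on the first $k+2$ coordinates.

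\textbf{The identity $\delta^2=0$.} Finally, $\delta_{k+1}\circ\delta_k=0$ is the standard simplicial identity for the alternating-sum ``face'' operator, identical to the verification that $d^2=0$ for the formal derivative \eqref{DerivadaFormal}: expanding $\delta_{k+1}(\delta_k f)$ one obtains a double sum over pairs of omitted indices in which each term appears exactly twice with opposite signs, hence cancels. I do not expect any genuine obstacle here; the only point requiring a little care is the bookkeeping in the continuity estimate (keeping track of which compact projections of $K$ arise from deleting each coordinate), but this is routine and parallels the argument already given for Proposition~\ref{Cont}.
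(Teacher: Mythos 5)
Your proposal is correct and follows essentially the same route as the paper: continuity of $\delta_k$ via Jensen's inequality applied to the alternating sum, reducing a compact set in $G^{k+2}$ to (products of) compact sets in $G$ so that the deleted coordinate contributes only a finite Haar-measure factor, and then Remark~\ref{equivNormas} to pass from the modular estimate to the norm estimate; the paper simply streamlines the bookkeeping by noting that every compact subset of $G^{k+2}$ lies in some $K^{k+2}$ with $K\subset G$ compact. The $G$-equivariance and $\delta^2=0$ are indeed the formal verifications you describe, which the paper leaves as straightforward.
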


\begin{proof}
Take $f\in \mathbb{L}^\phi_{loc}(G^{k+2},G)$. Since every compact set
in~$G^{k+2}$ is contained in~a~compact set of~the~form $K^{k+2}$, where
$K$ is a~compact set in~$G$, it suffices to~estimate
$\rho_{\phi,K^{k+2}}(\delta f)$ for proving that $\delta$ is well-defined and continuous. 

Using Jensen's inequality,
\begin{align*}
\rho_{\phi,K^{k+2}}(\delta f) &\le \frac{1}{k+2} \sum_{i=0}^{k+1} \mathcal{H}(K) 
\int_{G\times K^{k+1}} 
\phi\bigl( (k+2) f(x_0,\dots,\hat{x_i},\dots,x_{k+1},g) \bigr)  dx_0 \dots dx_{k+1}\, dg \\
&\leq \mathcal{H}(K)\rho_{\phi,K^{k+1}}\bigl((k+2) f\bigr)
\end{align*}
Therefore,
$
\|\delta f\|_{\phi,K^{k+2}}
\preceq  \|f\|_{\phi,K^{k+1}}
$.

The $G$-equivariance of~$\delta$ and the~relation $\delta^2=0$ are straightforward
from~the~definition.


\end{proof}

\begin{lemma}\label{strong-res}
The complex \eqref{comp-res} is a~strong resolution of~$L^\phi(G)$.
\end{lemma}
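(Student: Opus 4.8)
The plan is to appeal to Remark~\ref{ResolucionFuerte}: to prove that \eqref{comp-res} is a strong resolution of $L^\phi(G)$ it suffices to exhibit a continuous contracting homotopy, i.e.\ linear continuous maps $h_k\colon\mathbb{L}^\phi_{loc}(G^{k+1},G)\to\mathbb{L}^\phi_{loc}(G^{k},G)$ for $k\ge 0$ (with the convention $\mathbb{L}^\phi_{loc}(G^{0},G)=L^\phi(G)$ and $\delta_{-1}v(x_0,g)=v(g)$) such that $h_0\circ\delta_{-1}=\mathrm{Id}$ and $h_{k+1}\circ\delta_k+\delta_{k-1}\circ h_k=\mathrm{Id}$ for every $k\ge 0$. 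Since $G$ is locally compact, fix a bounded Borel function $\chi\colon G\to[0,+\infty)$ with compact support $L=\supp(\chi)$ and $\int_G\chi(y)\,dy=1$ (for instance $\chi=\mathcal{H}(U)^{-1}\mathbbm{1}_U$ for a compact neighbourhood $U$ of the identity), and set
\[
h_k f(x_0,\dots,x_{k-1},g)=\int_G f(y,x_0,\dots,x_{k-1},g)\,\chi(y)\,dy ,
\]
i.e.\ $h_k$ inserts a new first coordinate and averages it against $\chi$; for $k=0$ this reads $h_0 f(g)=\int_G f(y,g)\chi(y)\,dy$. This is well defined a.e.\ since $\mathbb{L}^\phi_{loc}\subset L^1_{loc}$ and $\chi$ is bounded with compact support.

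The first step is to check the two algebraic identities, which amount to a bookkeeping computation with the alternating sum defining $\delta$. For $k=0$ one has $h_0\delta_{-1}v(g)=v(g)\int_G\chi=v(g)$. For $k\ge 0$, expanding
\[
(\delta_k f)(y,x_0,\dots,x_k,g)=f(x_0,\dots,x_k,g)-\sum_{j=0}^{k}(-1)^j f(y,x_0,\dots,\hat{x}_j,\dots,x_k,g)
\]
and integrating against $\chi(y)\,dy$ gives $h_{k+1}\delta_k f(x_0,\dots,x_k,g)=f(x_0,\dots,x_k,g)-\sum_{j=0}^{k}(-1)^j (h_k f)(x_0,\dots,\hat{x}_j,\dots,x_k,g)$, while by definition $\delta_{k-1}h_k f(x_0,\dots,x_k,g)=\sum_{j=0}^{k}(-1)^j (h_k f)(x_0,\dots,\hat{x}_j,\dots,x_k,g)$; adding the two yields $f$.

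It remains to see that each $h_k$ is a well-defined continuous map into $\mathbb{L}^\phi_{loc}(G^{k},G)$, which is the only point that needs a little care. Since $\chi(y)\,dy$ is a probability measure, Jensen's inequality gives $\phi\bigl(h_k f(x_0,\dots,x_{k-1},g)\bigr)\le\int_G\phi\bigl(f(y,x_0,\dots,x_{k-1},g)\bigr)\chi(y)\,dy$. Hence, for a compact set $K\subset G^{k}$, choosing a compact $K'\subset G^{k+1}$ containing $L\times K$, Tonelli's theorem yields
\[
\rho_{\phi,K}(h_k f)\le\sup(\chi)\,\rho_{\phi,K'}(f)
\]
(and likewise $\rho_\phi(h_0 f)\le\sup(\chi)\,\rho_{\phi,L}(f)$ when $k=0$). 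In particular $\|h_k f\|_{\phi,K}<+\infty$ for every compact $K$, so $h_k f\in\mathbb{L}^\phi_{loc}(G^{k},G)$; and the Luxembourg-norm scaling recorded in Remark~\ref{equivNormas} turns this modular estimate into $\|h_k f\|_{\phi,K}\preceq\|f\|_{\phi,K'}$ with a constant independent of $f$. Since the compact subsets of $G^{k}$ are directed and the sets $K'$ of the above form are cofinal among the compact subsets of $G^{k+1}$, this is exactly the statement that $h_k$ is continuous for the Fr\'echet topologies. This produces the desired contracting homotopy and, by Remark~\ref{ResolucionFuerte}, finishes the proof.

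I expect the main obstacle to be purely organizational: tracking the compact sets when passing from $G^{k+1}$ down to $G^{k}$ and invoking the norm-scaling of Remark~\ref{equivNormas} correctly; the homotopy identity, although it has to be written out, is routine. Note that $h_k$ is \emph{not} $G$-equivariant, which is both expected and harmless, since strongness only requires a continuous (not equivariant) contracting homotopy.
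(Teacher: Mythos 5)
Your proof is correct and follows essentially the same route as the paper: a contracting homotopy obtained by averaging one simplex coordinate against a compactly supported bump $\chi$ with $\int\chi=1$, with well-definedness via local integrability and continuity via Jensen's inequality plus the modular-to-norm conversion of Remark~\ref{equivNormas}. The only (cosmetic) difference is that you insert the averaged variable in the first coordinate, so no sign $(-1)^k$ is needed, whereas the paper averages the last coordinate and carries that sign.
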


\begin{proof}

By Remark \ref{ResolucionFuerte} it suffices to construct a continuous contracting homotopy, that is, a family of continuous linear maps $\sigma_{k}:\mathbb{L}^\phi_{loc}(G^{k+1},G)\to \mathbb{L}^\phi_{loc}(G^{k},G)$ for $k\geq 1$ and $\sigma_{0}:\mathbb{L}^\phi_{loc}(G,G)\to L^\phi(G)$ such that
\begin{equation}\label{contractante}
\left\{\begin{array}{cc}
    \sigma_0\circ \delta_{-1}=Id & \\
    \sigma_{k+1}\circ\delta_{k}+\delta_{k-1}\circ \sigma_k=Id &  \text{for all }k\geq 0
\end{array}
\right.
\end{equation}

To this end, we begin by considering a non-negative bounded function $\chi:G\to\R$ with compact support $K_\chi$ such that
$$\int_G \chi(x) dx =1.$$
Then, given $f\in \mathbb{L}^\phi_{loc}(G^{k+1},G)$, we define (where it exists) 
\begin{equation}\label{sigma-n}
(\sigma_{k} f)(x_0,\dots,x_{k-1},g) = (-1)^{k}\int_G f(x_0,\dots,x_{k-1},x,g) \chi(x) \,dx.
\end{equation}
In the case $k=0$, the left-hand side of \eqref{sigma-n} is $(\sigma_{0}f)(g)$. 

Let us prove that the~expression  \eqref{sigma-n} is defined for~almost every
$(x_0,\dots,x_{k-1})\in G^{k}$ and $g\in G$. Since 
$f\in \mathbb{L}^\phi_{loc}(G^{k+1},G)$, for~any compact set $K\subset G^{k}$,
we have 
$$
\int_G \int_{K} \int_{K_\chi} \phi\bigl(f(x_0,\dots,x_{k-1},x,g)\bigr)\,
dx\, dx_0\dots dx_{k-1}\, dg < +\infty.
$$
Thus, 
$$
\int_{K_\chi} \phi\bigl( f(x_0,\dots,x_{k-1},x,g)\bigr)\,
dx <+\infty
$$
for~almost every $(x_0,\dots,x_{k-1})\in K$ and $g\in G$, that is, the function $x\mapsto f(x_0,\dots,x_{k-1},x,g)$ belongs to $L^\phi(K_\chi)$ and hence it belongs to $L^1(K_\chi)$ for these values of $(x_0,\ldots,x_{k-1})$ and $g$. This implies that $\sigma_kf$ is well-defined for almost every point in $K\times G$, and since $G^{k}$ can be written as a countable union of compact sets, it is well-defined for almost every point in $G^{k}\times G$. The argument also works in the case $k=0$ by omitting the compact set $K$.


To see that  $\sigma_{k}$ maps continuously 
$\mathbb{L}^\phi_{loc}(G^{k+1},G)$ into $\mathbb{L}^\phi_{loc}(G^{k},G)$, take  $f\in \mathbb{L}^\phi_{loc}(G^{k+1},G)$ and $K$ a compact subset of $G^{k}$, then by Jensen's inequality we have
$$\rho_{\phi,K}(\sigma_kf)\leq \int_G\int_{K\times K_\chi} \frac{\sup\chi}{\mathcal{H}(K_\chi)}\,\phi\bigl( \mathcal{H}(K_\chi)  f(x_0,\dots,x_{k-1},x,g)\bigr)
dx_0\dots dx_{k-1}\, dx\, dg.$$
This implies that $\|\sigma_kf\|_{\phi,K}\preceq \|f\|_{\phi,K\times K_\chi}$. As above, the same argument works for $k=0$.

The lemma is proven with the verification of \eqref{contractante}, which is straightforward from the definitions.  



\end{proof}

As a consequence of Lemmas \ref{RelInj} and \ref{strong-res}, we conclude that $\bigl(\mathbb{L}^\phi_{loc}(G^{*+1},G),\delta\bigr)$ is a relatively injective strong $G$-resolution. By Propositions \ref{PropSR} and \ref{PropEq}, the cohomology of $\bigl(\mathbb{L}^\phi_{loc}(G^{*+1},G)^G,\delta\bigr)$ is isomorphic to the continuous $L^\phi$-cohomology of $G$.

With the following lemma we finish the proof of Theorem \ref{MainEquivalence}.

\begin{lemma}\label{two-com}
For every $k\geq 0$, the spaces $\mathbb{L}^\phi_{loc}(G^{k+1},G)^G$ and $AS^k_\phi(G)$
are isomorphic. Furthermore, the isomorphism commutes with the derivatives  $\delta$ and $d$. 
\end{lemma}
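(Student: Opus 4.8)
The plan is to exhibit an explicit linear map realizing the isomorphism, check it is a topological isomorphism using the Fréchet structures already established, and then verify it intertwines the two coboundary operators by a direct computation.

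First I would describe the candidate map. An element $\omega\in\mathbb{L}^\phi_{loc}(G^{k+1},G)^G$ is a function $\omega(x_0,\dots,x_k,g)$ on $G^{k+2}$ which is $G$-invariant in the sense $\omega(hx_0,\dots,hx_k,gh^{-1})=\omega(x_0,\dots,x_k,g)$ for a.e.\ point and all $h$. Setting $h=g$ shows $\omega$ is a.e.\ determined by its values with the last coordinate equal to the identity $e$: define $\Phi(\omega)(x_0,\dots,x_k)=\omega(g^{-1}x_0,\dots,g^{-1}x_k,g)$ for any $g$ (independent of $g$ by invariance), equivalently $\Phi(\omega)(x_0,\dots,x_k)=\omega(x_0,\dots,x_k,e)$ after a change of variables. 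The inverse is $\Psi(u)(x_0,\dots,x_k,g)=u(g^{-1}x_0,\dots,g^{-1}x_k)$ for $u\in AS^k_\phi(G)$; one checks $\Psi(u)$ is $G$-invariant by construction and that $\Phi\circ\Psi=\mathrm{Id}$, $\Psi\circ\Phi=\mathrm{Id}$ on the respective spaces (the latter using $G$-invariance of $\omega$).

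Next I would check that $\Phi$ and $\Psi$ are continuous for the Fréchet topologies. Here the bounded geometry / properness enters: the seminorms on $AS^k_\phi(G)$ are indexed by $s>0$ via integration over $G^{k+1}_s=\{\mathrm{diam}\le s\}$, while those on $\mathbb{L}^\phi_{loc}(G^{k+1},G)$ are indexed by compact $K\subset G^{k+1}$, equivalently by compacts of the form $K_0^{k+1}$. The key computation is a Fubini-type change of variables: for $u\in AS^k_\phi(G)$,
$$\rho_{\phi,K_0^{k+1}}(\Psi u)=\int_{K_0^{k+1}}\int_G \phi\bigl(u(g^{-1}x_0,\dots,g^{-1}x_k)\bigr)\,dg\,dx_0\cdots dx_k,$$
and substituting $y_i=g^{-1}x_i$ (using left-invariance of $\mathcal{H}$ and the modular function on the $g$-integral) converts this into an integral over a set of simplices whose diameter is controlled by $\mathrm{diam}(K_0)$, so it is bounded by a constant times $\rho_{\phi,s}(u)$ with $s=\mathrm{diam}(K_0)$; Remark~\ref{equivNormas} then gives the seminorm estimate $\|\Psi u\|_{\phi,K_0^{k+1}}\preceq\|u\|_{\phi,s}$. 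Conversely, for $\omega\in\mathbb{L}^\phi_{loc}(G^{k+1},G)^G$ one fixes a compact $K_0\ni e$ of positive measure, writes $\|u\|_{\phi,s}$ for $u=\Phi\omega$ in terms of an integral over $G^{k+1}_s$, uses invariance to insert an average over $g\in K_0$, and recovers a bound by $\|\omega\|_{\phi,K_0'}$ for a suitable compact $K_0'$ depending on $s$ and $K_0$. This shows both maps are continuous, hence $\Phi$ is a topological isomorphism.

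Finally I would verify $\Phi\circ\delta_k=d_k\circ\Phi$ (equivalently $\Psi\circ d_k=\delta_k\circ\Psi$). This is a routine but necessary check: from $(\delta_k\omega)(x_0,\dots,x_{k+1},g)=\sum_i(-1)^i\omega(x_0,\dots,\hat x_i,\dots,x_{k+1},g)$ one evaluates at $g=e$ to get $\sum_i(-1)^i\Phi(\omega)(x_0,\dots,\hat x_i,\dots,x_{k+1})$, which is exactly $d_k(\Phi\omega)(x_0,\dots,x_{k+1})$ by the formula \eqref{DerivadaFormal}. Since the omitted-coordinate structure of $\delta$ matches that of $d$ verbatim, the intertwining is immediate. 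I expect the main obstacle to be the continuity estimates in both directions, specifically controlling the diameter of the relevant simplex sets after the change of variables and ensuring the compact sets produced are genuinely compact (using properness of the metric) so that the seminorm indexing is respected; the algebraic parts (bijectivity and commuting with the differentials) are formal.
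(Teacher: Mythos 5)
Your continuity estimates are essentially the paper's computation (the paper in fact gets an exact identity $\rho_{\phi,K^Q_s}\bigl(\Lambda(u)\bigr)=\mathcal{D}(Q)\,\rho_{\phi,s}(u)$ for the map $\Lambda(u)(x_0,\ldots,x_k,g)=u(gx_0,\ldots,gx_k)$, which settles both directions at once), and the intertwining with $\delta$ and $d$ is indeed formal. But the step you dismiss as formal is where the real difficulty lies, and as written it is a genuine gap. Elements of $\mathbb{L}^\phi_{loc}(G^{k+1},G)$ are a.e.-classes, and $G$-invariance means only that for each fixed $h$ the identity $\omega(hx_0,\ldots,hx_k,gh^{-1})=\omega(x_0,\ldots,x_k,g)$ holds off a null set \emph{depending on $h$}. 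So you cannot ``set $h=g$'' pointwise, and $\Phi(\omega)(x_0,\ldots,x_k)=\omega(x_0,\ldots,x_k,e)$ is an evaluation on the null slice $\{g=e\}$ of $G^{k+2}$, which is meaningless for a class; likewise ``$\omega(g^{-1}x_0,\ldots,g^{-1}x_k,g)$ for any $g$, independent of $g$ by invariance'' invokes a pointwise identity you do not have. This is exactly why the paper proves surjectivity of $\Lambda$ via Proposition \ref{Zimmer} (Zimmer): one first replaces $f$ by a strictly $G$-equivariant Borel representative $\tilde f$ defined on a $G$-invariant conull set $X_0$, checks that the slices $Z_g=\{(x_0,\ldots,x_k):(x_0,\ldots,x_k,g)\in X_0\}$ have full measure (using $h^{-1}Z_g=Z_{gh}$ and a Fubini argument), and only then defines $u=\tilde f(\cdot,1)$ and verifies $\Lambda(u)=f$ a.e. Without this measurable-selection step your map $\Phi$, and hence the claimed bijectivity, is not established. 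Injectivity also needs a (routine) Fubini computation showing $\Lambda(u)=0$ a.e.\ iff $u=0$ a.e., rather than being purely algebraic.

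A secondary slip: with the invariance convention you state, your inverse formula is wrong; $\Psi(u)(x_0,\ldots,x_k,g)=u(g^{-1}x_0,\ldots,g^{-1}x_k)$ is \emph{not} $G$-invariant (check: replacing $(x_i,g)$ by $(hx_i,gh^{-1})$ gives $u(hg^{-1}hx_0,\ldots)$). The invariant choice, and the one matching your own relation $\omega(x,g)=\omega(gx_0,\ldots,gx_k,e)$, is $\Psi(u)(x_0,\ldots,x_k,g)=u(gx_0,\ldots,gx_k)$, i.e.\ the paper's $\Lambda$. With that correction the algebra and the seminorm estimates go through as you outline, but the surjectivity argument via a strictly equivariant representative still has to be supplied.
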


In the proof of this lemma, we will use a proposition from Zimmer's book (\cite[Section B.5]{Z}). A simplified version of it is used in \cite{BP} to prove the $L^p$ version of the lemma.

\begin{proposition}\label{Zimmer}
Let $X$ and $Y$ be two standard Borel spaces (i.e. they are isomorphic to some Borel subset of a complete separable metric space) and $G$ a locally compact second countable group acting on $X$ and $Y$. Suppose that $\mu$ is a $G$-quasi-invariant Borel measure on $X$ (i.e. $\mu(gA)=0$ if and only if $\mu(A)=0$ for any $A\subset X$) and $f:X\to Y$ is a Borel function such that for every $g\in G$ we have $f(gx)=gf(x)$ for $\mu$-almost every $x\in X$. Then there exists a $G$-invariant Borel subset of full measure $X_0\subset X$ and a $G$-equivariant Borel function $\tilde{f}:X_0\to Y$ that coincides with $f$ almost everywhere.
\end{proposition}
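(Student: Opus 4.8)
\textbf{Plan for the proof of Lemma \ref{two-com}.}
The plan is to build an explicit linear map $\Theta:\mathbb{L}^\phi_{loc}(G^{k+1},G)^G\to AS^k_\phi(G)$ and show it is a topological isomorphism intertwining $\delta$ and $d$. Given $f\in\mathbb{L}^\phi_{loc}(G^{k+1},G)^G$, the $G$-invariance reads $f(g^{-1}x_0,\dots,g^{-1}x_k,hg)=f(x_0,\dots,x_k,h)$ for a.e.\ everything; setting $h=e$ and renaming $g=h^{-1}$ shows $f(x_0,\dots,x_k,h)=f(hx_0,\dots,hx_k,e)$ a.e. So $f$ is determined by the single-variable-reduced function $u_f(x_0,\dots,x_k):=f(x_0,\dots,x_k,e)$, and we set $\Theta(f)=u_f$. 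Conversely, given $u:G^{k+1}\to\R$ one wants to define $f_u(x_0,\dots,x_k,h):=u(hx_0,\dots,hx_k)$; this is manifestly $G$-invariant, and the problem is to check $f_u\in\mathbb{L}^\phi_{loc}(G^{k+1},G)$ iff $u\in AS^k_\phi(G)$, together with the quantitative comparison of seminorms.

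The key computation, in both directions, is a Fubini/change-of-variables estimate. For a compact $K\subset G$ and $u\in AS^k_\phi$, one has
\begin{align*}
\rho_{\phi,K^{k+1}}(f_u)&=\int_G\int_{K^{k+1}}\phi\bigl(u(hx_0,\dots,hx_k)\bigr)\,dx_0\cdots dx_k\,dh\\
&=\int_{K^{k+1}}\int_G \phi\bigl(u(hx_0,\dots,hx_k)\bigr)\,dh\,dx_0\cdots dx_k,
\end{align*}
but the inner integral over $h\in G$ is over a \emph{noncompact} set, so this is not directly bounded by a diameter-truncated modular $\rho_{\phi,s}(u)$. The fix is to observe that for fixed $(x_0,\dots,x_k)\in K^{k+1}$ the map $h\mapsto(hx_0,\dots,hx_k)$ has image landing in $G^{k+1}_{s}$ for $s=\operatorname{diam}(K^{k+1})$ plus a constant, and after the substitution one recognizes a bounded-geometry-type integral; more precisely, one should first reduce $K$ to a ball, use left-invariance to translate, and then integrate $h$ over a fixed compact set times a piece where the integrand is supported on a diameter-bounded region, getting $\rho_{\phi,K^{k+1}}(f_u)\preceq \rho_{\phi,s}(u)$ with $s$ depending on $K$. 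The reverse inequality is analogous: restricting $f_u$ to $h$ ranging over a fixed compact neighborhood $W$ of $e$ recovers, by left-invariance and Fubini, a lower bound $\rho_{\phi,s}(u)\preceq \rho_{\phi,K^{k+1}}(f_u)$ for suitable $K$ containing $W\cdot(\text{a set of diameter }s)$. Combining with Remark \ref{equivNormas} (to pass between modular inequalities and norm inequalities, which is legitimate because $\phi$ is doubling, via Proposition \ref{PropDoblante}) gives the bicontinuity, hence $\Theta$ is a topological isomorphism.

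It remains to check that $\Theta$ intertwines the coboundary operators, which is a direct symbol manipulation: for $f\in\mathbb{L}^\phi_{loc}(G^{k+1},G)^G$,
\begin{align*}
\Theta(\delta_k f)(x_0,\dots,x_{k+1})&=(\delta_k f)(x_0,\dots,x_{k+1},e)=\sum_{i=0}^{k+1}(-1)^i f(x_0,\dots,\hat{x}_i,\dots,x_{k+1},e)\\
&=\sum_{i=0}^{k+1}(-1)^i u_f(x_0,\dots,\hat{x}_i,\dots,x_{k+1})=(d_k u_f)(x_0,\dots,x_{k+1}),
\end{align*}
so $\Theta\circ\delta_k=d_k\circ\Theta$.

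\textbf{Main obstacle.} The genuinely delicate point is that $f$ is only an equivalence class, and the identity $f(x_0,\dots,x_k,h)=f(hx_0,\dots,hx_k,e)$ holds only \emph{almost everywhere}, so the ``restriction to $h=e$'' defining $u_f$ is not literally a restriction to a null set. This is exactly where Proposition \ref{Zimmer} enters: apply it with $X=G^{k+1}\times G$, $Y=\R$ (or better, encode the $G$-invariance as equivariance for the diagonal-left/right $G$-action and a trivial action on $\R$, or rather realize the invariance as a genuine cocycle identity) to replace $f$ by a strictly $G$-equivariant Borel representative $\tilde f$ on a $G$-invariant conull set $X_0$; for such $\tilde f$ the formula $\tilde f(x_0,\dots,x_k,h)=\tilde f(hx_0,\dots,hx_k,e)$ holds for \emph{every} point of $X_0$, and one checks $X_0$ meets a.e.\ fiber $\{h=e\}$-slice so that $u_{\tilde f}$ is a well-defined measurable function on a conull subset of $G^{k+1}$. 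One must also verify the standing hypotheses of Proposition \ref{Zimmer}: $G^{k+1}\times G$ with the action $g\cdot(x_0,\dots,x_k,h)=(gx_0,\dots,gx_k,hg^{-1})$ is a standard Borel $G$-space, the measure $\mathcal H^{k+1}\otimes\mathcal H$ is $G$-quasi-invariant (it is quasi-invariant with Radon–Nikodym cocycle a power of the modular function), and $G$ is second countable. With this in hand all the a.e.\ manipulations above become rigorous, and the seminorm estimates go through as sketched.
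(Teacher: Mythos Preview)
Your proposal does not address Proposition~\ref{Zimmer} at all: you have written a plan for Lemma~\ref{two-com}. The paper itself does not prove Proposition~\ref{Zimmer} either; it is quoted from Zimmer's book and used as a black box precisely in the proof of Lemma~\ref{two-com}. So as a proof of the stated proposition, your proposal is simply off target.

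That said, since you evidently meant to sketch Lemma~\ref{two-com}, here is how your plan compares with the paper's argument. The overall architecture is the same: build the map $u\mapsto f_u$ (the paper calls it $\Lambda$), check $G$-invariance, compare seminorms, and invoke Proposition~\ref{Zimmer} to handle the ``restrict to $h=e$'' null-set issue for surjectivity. Your identification of this last point as the main obstacle, and your proposed application of Proposition~\ref{Zimmer} with $X=G^{k+1}\times G$, $Y=\R$, and the twisted action, matches the paper exactly (up to the harmless $g\leftrightarrow g^{-1}$ reparametrization of the action). The paper also spells out why the slice $\{h=e\}$ meets the conull invariant set in a conull subset of $G^{k+1}$, which you only assert.

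The genuine difference is the seminorm comparison. You work with product compacts $K^{k+1}$ and immediately hit the problem that the $h$-integral runs over all of $G$; your ``fix'' (reduce to a ball, translate, recognize a bounded-geometry integral) is too vague to be a proof, and making it precise is fiddly because after the substitution $h'=hx_0$, $z_i=x_0^{-1}x_i$, the domain of the $z_i$ depends on $x_0$. The paper sidesteps this entirely by using a different generating family of compacts, $K^Q_s=\{(x_0,\dots,x_k)\in G^{k+1}_s : x_0\in Q\}$, for which the same change of variables yields the exact identity
\[
\rho_{\phi,K^Q_s}\bigl(\Lambda(u)\bigr)=\Bigl(\int_Q\Delta(x)\,dx\Bigr)\,\rho_{\phi,s}(u),
\]
giving well-definedness, continuity, and openness of $\Lambda$ in a single line. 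This is the one idea your sketch is missing.
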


\begin{proof}[Proof of Lemma \ref{two-com}]
Given $u\in AS^k_\phi(G)$, define $\Lambda(u):G^{k+1}\times G\to\R$ by
$$\Lambda(u)(x_0,\ldots,x_n,g)=u(gx_0,\ldots,gx_k).$$
Then $\Lambda$ must be a Borel function because $u$ is Borel and the map $\theta:G^{k+1}\times G\to G^{k+1}$,  $\theta(x_0,\ldots,x_k,g)=(gx_0,\ldots,gx_k)$,
is continuous.

Moreover, if $A=u^{-1}(\R\setminus \{0\})$, we have
\begin{align*}
\mathcal{H}^{k+2}(\theta^{-1}A) &= \int_G\int_{G^{k+1}}\mathbbm{1}_{\theta^{-1}A}(x_0,\ldots,x_k,g)\,dx_0\ldots dx_k\, dg  \\
&= \int_G\int_{G^{k+1}}\mathbbm{1}_{A}(gx_0,\ldots,gx_k)\,dx_0\ldots dx_k\,dg\\
&=\int_G \mathcal{H}^{k+1}(g^{-1}A)\,dg =\int_G \mathcal{H}^{k+1}(A)\,dg,
\end{align*} 
which implies that $\mathcal{H}^{k+2}(\theta^{-1}A)=0$ if and only if $\mathcal{H}^{k+1}(A)=0$, or, equivalently, $\Lambda(u)=0$ almost everywhere if and only if $u=0$ almost everywhere. As a consequence, the function $u\mapsto\Lambda(u)$ is well-defined and injective to the space of Borel functions up to almost everywhere zero functions. We have to prove that it is well-defined, surjective, continuous, and open from $AS^k_\phi(G)$ to $\mathbbm{L}_{loc}^\phi(G^{k+1},G)^G$. 



It is easy to see that $\Lambda(u)$ is $G$-invariant. Observe that the topology of $\mathbb{L}_{loc}^\phi(G^{k+1},G)$ is generated by the family of semi-norms of the form $\|\ \|_{\phi,K_s^Q}$, where 
$$K_s^Q = \{ (x_0,\dots,x_k)\in G^{k+1}_s \,:\, x_0\in Q \}
$$
for $Q$ any compact set and $s>0$. If $u\in AS_{\phi}^{k}(G)$, then
\begin{align*}
\rho_{\phi,K^Q_s}  \bigl(\Lambda(u)\bigr) 
&= \int_{G}\int_Q\int_{G^{k}} \phi\bigl(u(gx,gxy_1,\ldots,gxy_k)\bigr)\, \mathbbm{1}_{G_s^{k+1}}(1,y_1,\ldots,y_k)  \, dy_1\ldots dy_k\, dx\,  dg\\
&= \int_{G}\int_Q\int_{G^{k}} \Delta(x) \phi\bigl(u(g,gy_1,\ldots,gy_k)\bigr)\, \mathbbm{1}_{G_s^{k+1}}(1,y_1,\ldots,y_k)\, dy_1\ldots dy_k\, dx\, dg\\
&= \left(\int_Q\Delta(x)dx\right) \int_{G^{k+1}_s}\phi\bigl(u(y_0,\ldots,y_k)\bigr)\, dy_0\ldots dy_k
 = \mathcal{D}(Q)\rho_{\phi,s}(u),
\end{align*}
where $\mathcal{D}(Q)=\int_Q\Delta(x)dx<+\infty$. Hence, $\Lambda$ is a well-defined continuous embedding. 

Finally, let us prove that $\Lambda$ is surjective. Take $f\in \mathbb{L}^\phi_{loc}(G^{k+1},G)^G$ and find $u\in AS^{\phi,k}(G)$
with $\Lambda(u)=f$.

We use Proposition \ref{Zimmer} for $X=G^{k+1}\times G$ equipped with the measure $\mathcal{H}^{k+2}$ where $G$ acts by $h\cdot (x_0,\ldots,x_k,g)=(h^{-1}x_0,\ldots,h^{-1}x_k,gh)$, and $Y=\R$ where $G$ acts trivially. Then we obtain a $G$-invariant Borel set of full measure $X_0\subset X$ and a $G$-invariant function $\tilde{f}:X_0\to \R$ that coincides with $f$ almost everywhere. 

Consider for $h\in G$ the set
$$Z_g=\{(x_0,\ldots,x_k)\in G^{k+1} : (x_0,\ldots,x_k,g)\in X_0\}.$$
One can easily verify that $h^{-1}Z_g=Z_{gh}$ for every $g,h\in G$. Thus, an argument as in the beginning of the proof allows to show that $Z_g$ has full measure in $G^{k+1}$.

Define $u:G^{k+1}\to\R$ by $$
u(x_0,\ldots,x_k) =
\left\{
\begin{array}{cc}
\tilde{f}(x_0,\ldots,x_k,1) & \text{ if }(x_0,\ldots,x_k)\in Z_1, \\
0                   & \text{otherwise}.
\end{array}\right. 
$$
To see that $\Lambda(u)=f$, observe that, by definition, for every $g\in G$ and every $(x_0,\ldots,x_k)$ in $Z_g$, 
$$u(gx_0,\ldots,gx_k)=\tilde{f}(gx_0,\ldots,gx_k,1)=\tilde{f}(x_0,\ldots,x_k,g).$$
Therefore, $\Lambda(u)(x_0,\ldots,x_k,g)=f(x_0,\ldots,x_k,g)$ for almost every $(x_0,\ldots,x_k,g)\!\in\! G^{k+1}\!\times G$,  
which finishes the proof.
\end{proof}

\section{The discrete case}

Suppose that $X$ is a finite-dimensional simplicial complex equipped with a length metric of bounded geometry, that is, there exist a constant $C\geq 0$ and an increasing function $N:(0,+\infty)\to (0,+\infty)$ such that
\begin{enumerate}
    \item[(e)] the diameter of every simplex is bounded by $C$;
    \item[(f)] for every $r>0$ the number of simplices that intersect any ball of radius $r$ is bounded by $N(r)$. 
\end{enumerate}

Consider the cochain complex
$$\ell^\phi\bigl(X^{(0)}\bigr)\stackrel{d_0}{\rightarrow} \ell^\phi \bigl(X^{(1)}\bigr)\stackrel{d_1}{\rightarrow} \ell^\phi \bigl(X^{(2)}\bigr)\stackrel{d_2}{\rightarrow}\ldots,$$
where $X^{(k)}$ is the set of $k$-simplices in $X$ and $d=d_k$ is defined by \eqref{DerivadaFormal}, which coincides with the usual co-boundary operator, that is, $d\theta(\sigma)=\theta (\partial\sigma)$ for every $\theta\in \ell^\phi\bigl(X^{(k)}\bigr)$ and $\sigma\in X^{(k+1)}$.
The spaces $\ell^\phi\bigl(X^{(k)}\bigr)$ are Banach spaces equipped with the Luxemburg norm $\|\ \|_{\phi}$. We define the $k$\textit{th (reduced) $\ell^\phi$-cohomology space} of $X$ as
$$\ell^\phi H^k(X)=\frac{\mathrm{Ker}\ d_k}{\mathrm{Im}\ d_{k-1}}\quad \left(\ell^\phi \overline{H}^k(X)=\frac{\mathrm{Ker}\ d_k}{\overline{\mathrm{Im}\ d_{k-1}}}\right).$$

\begin{theorem}[\cite{C}]\label{teoCar}
Let $X$ and $Y$ be two uniformly contractible simplicial complexes with bounded geometry. If they are quasi-isometric, then $\Bigl(\ell^\phi\bigl(X^{(*)}\bigr),d\Bigr)$ and $\Bigl(\ell^\phi\bigl(Y^{(*)}\bigr),d\Bigr)$ are homotopy equivalent for any Young function $\phi$. Hence, their (reduced) cohomologies are isomorphic. 
\end{theorem}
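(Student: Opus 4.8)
The plan is to reduce Theorem~\ref{teoCar} to the quasi-isometry invariance of asymptotic Orlicz cohomology (Theorem~\ref{MainInvariance}). I would equip the vertex set $X^{(0)}$ with the restriction of the length metric and with the counting measure $\mu$; condition (f) then guarantees that $\mu$ has bounded geometry (a ball of radius $r$ meets at most $N(r)$ simplices, hence contains at most $N(r)$ vertices, and contains at least one once $r\ge C$), and the inclusion $X^{(0)}\hookrightarrow X$ is a quasi-isometry. Consequently, if $X$ and $Y$ are quasi-isometric as in the statement, then $(X^{(0)},\mu)$ and $(Y^{(0)},\nu)$ are quasi-isometric metric measure spaces with bounded geometry, so the proof of Theorem~\ref{MainInvariance} produces a homotopy equivalence between $\bigl(AS^*_\phi(X^{(0)}),d\bigr)$ and $\bigl(AS^*_\phi(Y^{(0)}),d\bigr)$. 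It therefore suffices to show that, for a \emph{single} uniformly contractible simplicial complex $X$ with bounded geometry, the complexes $\bigl(\ell^\phi(X^{(*)}),d\bigr)$ and $\bigl(AS^*_\phi(X^{(0)}),d\bigr)$ are homotopy equivalent through continuous maps; composing the three equivalences then gives the homotopy equivalence in the statement, and passing to (reduced) cohomology the asserted isomorphisms.

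For this comparison I would exhibit two cochain maps. In one direction, the restriction map $r\colon AS^k_\phi(X^{(0)})\to\ell^\phi(X^{(k)})$ sends a function on $(k+1)$-tuples of vertices to its values on those tuples spanning a $k$-simplex (with the usual orientation convention). It commutes with $d$ because faces of simplices are simplices, and since each simplex has diameter at most $C$ by (e) one has $\rho_\phi(ru)\preceq\rho_{\phi,C}(u)$ and hence $\|ru\|_\phi\preceq\|u\|_{\phi,C}$ by Remark~\ref{equivNormas}, so $r$ is continuous. In the other direction I would use uniform contractibility to build, by induction on the dimension (and, within each dimension, on the diameter), a \emph{filling operator} $\Phi$ assigning to each ordered tuple $(v_0,\dots,v_k)$ of vertices a simplicial $k$-chain $\Phi(v_0,\dots,v_k)$ that equals $(v_0,\dots,v_k)$ whenever that tuple spans a $k$-simplex, that satisfies $\partial\Phi(v_0,\dots,v_k)=\sum_i(-1)^i\Phi(v_0,\dots,\hat v_i,\dots,v_k)$, and that, when $\mathrm{diam}(v_0,\dots,v_k)\le s$, is supported in a uniformly bounded neighbourhood of $\{v_0,\dots,v_k\}$, hence by (f) a combination of at most $N_0(s)$ simplices with coefficients bounded by $M_0(s)$. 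At each inductive step the prescribed boundary is a cycle, and the existence of a filling inside a controlled ball is exactly what uniform contractibility supplies. Then $\iota\theta(v_0,\dots,v_k):=\theta\bigl(\Phi(v_0,\dots,v_k)\bigr)$ defines a cochain map $\iota\colon\ell^\phi(X^{(k)})\to AS^k_\phi(X^{(0)})$, and its continuity $\|\iota\theta\|_{\phi,s}\preceq\|\theta\|_\phi$ follows from Jensen's inequality, the bounded-size property of $\Phi$, and the fact that, by bounded geometry, any fixed $k$-simplex occurs in $\Phi(v_0,\dots,v_k)$ for only boundedly many tuples of diameter $\le s$; this is the exact analogue of the estimate used for $\ell^p$ (see \cite{BP,E}), read with $\phi$ in place of $t\mapsto t^p$, and it uses only the convexity of $\phi$, so the doubling hypothesis is not needed.

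One checks from the construction that $r\circ\iota=\mathrm{Id}$. For $\iota\circ r\simeq\mathrm{Id}$ I would produce, by the same skeletal induction, a \emph{prism operator} $P$ with $\partial P(v_0,\dots,v_k)+P\bigl(\sum_i(-1)^i(v_0,\dots,\hat v_i,\dots,v_k)\bigr)=\Phi(v_0,\dots,v_k)-(v_0,\dots,v_k)$ and of uniformly bounded size, so that $h\theta(v_0,\dots,v_{k-1}):=\theta\bigl(P(v_0,\dots,v_{k-1})\bigr)$ defines $h\colon AS^k_\phi(X^{(0)})\to AS^{k-1}_\phi(X^{(0)})$ with $dh+hd=\iota\circ r-\mathrm{Id}$, continuous by the same bookkeeping. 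Thus $r$ and $\iota$ are mutually inverse continuous homotopy equivalences, inducing topological isomorphisms $\ell^\phi H^*(X)\cong L^\phi H^*_{AS}(X^{(0)})$ and $\ell^\phi\overline{H}^*(X)\cong L^\phi\overline{H}^*_{AS}(X^{(0)})$; continuity of all the maps and homotopies makes the induced maps on $\Ker d/\overline{\I d}$ continuous, so no density result (hence no doubling) is invoked. Combined with the first paragraph this proves the theorem.

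The step I expect to be the main obstacle is the construction of the filling and prism operators $\Phi$ and $P$ with the \emph{exact} boundary identities (so that $r$, $\iota$ and $h$ genuinely commute with $d$, not merely up to homotopy) and with sizes bounded uniformly in terms of the diameter; this is the point where uniform contractibility and bounded geometry are both indispensable, and it absorbs essentially all of the work, everything afterwards being the Orlicz transcription of the classical $\ell^p$ argument of \cite{BP,E}.
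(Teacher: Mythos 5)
The paper does not prove Theorem~\ref{teoCar} at all: it is imported from \cite{C}, where the homotopy equivalence is obtained by the direct Gromov--Bourdon--Pavlovi\'c-type construction between the two simplicial complexes (push vertices through the quasi-isometry, extend skeleton by skeleton using uniform contractibility, control norms by bounded geometry, with $\phi$ replacing $t\mapsto t^p$ via convexity only). Your route is genuinely different: you interpose the asymptotic complex, use Theorem~\ref{MainInvariance} for the quasi-isometry step, and reduce everything to a comparison $\bigl(\ell^\phi(X^{(*)}),d\bigr)\simeq\bigl(AS^*_\phi(X^{(0)}),d\bigr)$ for a single uniformly contractible bounded-geometry complex --- the Orlicz analogue of Genton's simplicial-versus-asymptotic theorem in the $L^p$ case \cite{G}. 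What this buys is reuse of the kernel machinery already proved in this paper and an intermediate statement of independent interest (which the present paper only verifies by hand for $F_2$ in the last section, and which indeed should not need $\phi$ doubling); what it costs is that the comparison theorem carries essentially the same technical weight as the direct proof, since both rest on the same controlled-filling technology. Your first paragraph (counting measure on $X^{(0)}$ has bounded geometry by (e)--(f), the inclusion of the vertex set is a quasi-isometry, and the proof of Theorem~\ref{MainInvariance} gives a homotopy equivalence of AS-complexes, not merely isomorphisms in cohomology) is correct.

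The one step of your outline that is genuinely incomplete is the size control on the filling operator $\Phi$ (and likewise on the prism $P$). Uniform contractibility produces, for the inductively prescribed boundary cycle, \emph{some} filling inside a ball of controlled radius, and condition (f) bounds the number of simplices available there; but nothing yet bounds the \emph{coefficients} of the filling by a constant $M_0(s)$, and without that bound the Jensen estimate behind $\|\iota\theta\|_{\phi,s}\preceq\|\theta\|_\phi$ (and the analogous one for $h$) collapses. The standard repair, implicit in the $\ell^p$ arguments you invoke, must be made explicit: a ball of the controlled radius contains at most $N(\varphi(s))$ simplices, so only finitely many simplicial isomorphism types occur, and one can choose the fillings through a bounded linear section of $\partial$ (for instance the minimal-norm solution on each type), giving coefficients bounded by $C(s)$ times the norm of the cycle, which is itself bounded inductively. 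With that supplement, together with the bookkeeping you do mention (each simplex occurs in $\Phi(\Delta)$ for boundedly many tuples $\Delta$ of diameter $\le s$) and the harmless but necessary convention that $\Phi$ and $P$ take values in simplicial chains viewed inside $C^*(X^{(0)})$ via fixed orderings of the vertices of each simplex (so that $r$, $\iota$, $h$ commute with $d$ exactly), your argument goes through and, as you observe, never uses the doubling condition.
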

A metric space $X$ is \textit{uniformly contractible} if there exists an increasing function $\varphi:(0,+\infty)\to (0,+\infty)$ such that every ball $B(x,r)$ is contractible in the ball $B(x,\varphi(r))$.

\medskip

Let $G$ be a discrete group acting properly discontinuously, cocompactly, and freely on a contractible locally finite simplicial complex $X$ by simplicial automorphisms. For each $k\geq 0$, consider the space $C\bigl(X^{(k)},\ell^\phi(G)\bigr)$  of functions $f:X^{(k)}\to \ell^\phi(G)$. We equip it with the compact-open topology, which coincides with the topology of pointwise convergence. They are $G$-modules for the action
$$(g\cdot f)(\sigma)=\pi(g)\bigl(f(g^{-1}\sigma)\bigr)\in\ell^\phi(G),\quad g\in G,\ \sigma\in X^{(k)}.$$
Recall that $\pi$ is the right regular representation on $\ell^\phi(G)$.

The derivative $d_k:C\bigl(X^{(k)},\ell^\phi(G)\bigr)\to C\bigl(X^{(k+1)},\ell^\phi(G)\bigr)$ is defined by \eqref{DerivadaFormal}. It is easy to see that it is a $G$-morphism. Then 
$$0\to V\stackrel{d_{-1}}{\rightarrow}C\bigl(X^{(0)},\ell^\phi(G)\bigr)\stackrel{d_0}{\rightarrow}C\bigl(X^{(1)},\ell^\phi(G)\bigr)\stackrel{d_1}{\rightarrow}C\bigl(X^{(2)},\ell^\phi(G)\bigr)\stackrel{d_2}{\rightarrow}\cdots$$
is a relatively injective strong $G$-resolution of $V$ (See \cite[Example 2.2]{BR}).

\begin{proposition}\label{equivSG}
The complexes $\Bigl(C\bigl(G^{*+1},\ell^\phi(G)\bigr)^G,d\Bigr)$ and $\bigl(\ell^\phi \bigl(X^{(*)}\bigr),d\bigr)$ are homotopy equivalent. Thus their (reduced) cohomology are isomorphic.
\end{proposition}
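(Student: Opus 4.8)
The plan is to exhibit a relatively injective strong $G$-resolution of $\ell^\phi(G)$ whose $G$-invariant part is the complex $\bigl(\ell^\phi(X^{(*)}),d\bigr)$, and then invoke Corollary~\ref{corRes} together with Proposition~\ref{PropEq}. Since $G$ acts freely, properly discontinuously and cocompactly on $X$, each set $X^{(k)}$ of $k$-simplices decomposes into finitely many free $G$-orbits; choosing one representative per orbit identifies $X^{(k)}$ with $G\times S_k$ for a finite set $S_k$, and this identification is $G$-equivariant when $G$ acts on the first factor by left translation. Under this identification the space $C\bigl(X^{(k)},\ell^\phi(G)\bigr)$ of all functions (with pointwise-convergence topology) becomes a product of $|S_k|$ copies of the space of functions $G\to\ell^\phi(G)$, and the subspace of $G$-invariant ones becomes a product of $|S_k|$ copies of $\ell^\phi(G)$ itself, because a $G$-equivariant function $G\to\ell^\phi(G)$ is determined by its value at the identity and the equivariance forces that value to lie in $\ell^\phi(G)$ with no further constraint. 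Tracking the coboundary $d$ through this identification, one checks that the resulting complex on $\prod_{\text{orbits}}\ell^\phi(G)$ is exactly $\bigl(\ell^\phi(X^{(*)}),d\bigr)$: a $G$-invariant cochain on $X^{(k)}$ corresponds to the $\ell^\phi$-cochain sending a simplex $g\sigma_j$ to the value at the identity of its image, and summing over faces is the simplicial coboundary. The finitely many orbits in each degree are what makes the $\ell^\phi$-summability match up, using the bounded-geometry hypotheses (e) and (f).

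\textbf{Key steps, in order.} First I would set up the orbit decomposition $X^{(k)}=\bigsqcup_{j=1}^{m_k} G\cdot\sigma_j$ and record the resulting $G$-equivariant homeomorphism $C\bigl(X^{(k)},\ell^\phi(G)\bigr)\cong \prod_{j=1}^{m_k} C\bigl(G,\ell^\phi(G)\bigr)$. Second, I would verify that the resolution $0\to \ell^\phi(G)\to C\bigl(X^{(*)},\ell^\phi(G)\bigr)$ displayed just before the statement (taken from \cite[Example 2.2]{BR}) is relatively injective and strong — this is quoted in the excerpt, so it can be used directly. Third, I would identify the $G$-invariant subcomplex: a computation showing $C\bigl(X^{(k)},\ell^\phi(G)\bigr)^G$ is linearly homeomorphic to $\ell^\phi\bigl(X^{(k)}\bigr)$, sending an invariant $f$ to the function $\sigma\mapsto$ (the coefficient of the identity element in $f(\sigma)$), say. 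Here the compact-open/pointwise topology on the invariant side must be shown to match the Luxembourg norm topology, which works because there are only $m_k<\infty$ orbits, so pointwise convergence on a transversal is equivalent to $\ell^\phi$-convergence. Fourth, I would check that this identification intertwines the two coboundary operators $d$ — immediate from \eqref{DerivadaFormal}, since both are the alternating sum over codimension-one faces and the orbit representatives can be chosen compatibly with the face maps up to the $G$-action. Finally, having realized $\bigl(\ell^\phi(X^{(*)}),d\bigr)$ as the $G$-invariant complex of a relatively injective strong resolution of $\ell^\phi(G)$, Proposition~\ref{PropSR} and Proposition~\ref{PropEq} give a homotopy equivalence with $\bigl(C\bigl(G^{*+1},\ell^\phi(G)\bigr)^G,d\bigr)$, and homotopy-equivalent complexes of topological vector spaces have isomorphic cohomology and reduced cohomology.

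\textbf{Main obstacle.} I expect the delicate point to be the identification of topologies in the third step: one must confirm that the compact-open topology on $C\bigl(X^{(k)},\ell^\phi(G)\bigr)$ restricts, on the $G$-invariant subspace, to the Banach norm topology of $\ell^\phi\bigl(X^{(k)}\bigr)$, and in particular that a $G$-invariant function with all its values lying in $\ell^\phi(G)$ automatically has $\ell^\phi$-summable "identity coefficients" across the (finitely many) orbit representatives — here is exactly where cocompactness and the bounded geometry conditions (e), (f) are used, and where the argument parallels the $L^p$ case from \cite{BP}. A secondary subtlety is that the chosen orbit representatives $\sigma_j$ need not satisfy $\partial\sigma_j=\sum(\text{representatives})$ exactly; the faces of $\sigma_j$ are $G$-translates of representatives, so the matching of coboundaries requires carrying along the translating group elements, but since these act by isometries on $\ell^\phi(G)$ via $\pi$, the $\ell^\phi$-norms are unaffected and the identification still intertwines $d$ with the simplicial coboundary. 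Everything else is a routine transcription of the well-known $L^p$ argument, replacing $\|\cdot\|_p$ by the Luxembourg norm and using the doubling hypothesis on $\phi$ only where convergence in $L^\phi$ must be detected modularly (Proposition~\ref{PropDoblante}), which does not actually enter this particular statement since all the spaces here are genuine Banach spaces.
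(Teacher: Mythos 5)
Your proposal is correct and follows essentially the same route as the paper: it realizes $\ell^\phi\bigl(X^{(*)}\bigr)$ as the $G$-invariant subcomplex of the relatively injective strong resolution $C\bigl(X^{(*)},\ell^\phi(G)\bigr)$ and then invokes Corollary~\ref{corRes}, which is exactly what the paper does via the explicit isomorphism $\Psi(\theta)(\sigma)(g)=\theta(g\sigma)$ with inverse $f\mapsto\bigl(\sigma\mapsto f(\sigma)(1)\bigr)$. Your orbit-decomposition phrasing, with freeness giving injectivity and cocompactness (finitely many orbits) giving the summability and the matching of topologies, is just a cosmetic repackaging of the paper's two modular estimates, so no further comparison is needed.
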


The proof of this proposition is a general version of the proof of Proposition 3.2 in \cite{BR}.

\begin{proof}
By Corollary \ref{corRes}, it suffices to prove that the complex $C\bigl(X^{(k)},\ell^\phi(G)\bigr)^G$ is isomorphic to $\ell^\phi \bigl(X^{(*)}\bigr)$ and the isomorphism commutes with the derivative. To this end, we define $\Psi:\ell^\phi \bigl(X^{(k)}\bigr)\to C\bigl(X^{(k)},\ell^\phi(G)\bigr)$ by
$$\Psi(\theta)=f,\ f(\sigma)(g)=\theta(g\sigma).$$

Observe that, if $\sigma\in X^{(k)}$, then
\begin{equation*}
\rho_\phi\bigl(f(\sigma)\bigr) = \sum_{g\in G} \phi\bigl(\theta(g\sigma))\bigr)\leq \sum_{\sigma\in X^{(k)}} \phi\bigl(\theta(\sigma)\bigr)=\rho_\phi(\theta),
\end{equation*}
where the inequality comes from the fact that $G$ acts freely on $X$. We conclude that $f(\sigma)\in \ell^\phi(G)$ and hence $\Psi$ is well-defined. This also shows that $\Psi$ is continuous, because if $\theta_n\to 0$ in $\ell^\phi(X^{(k)})$, then $\rho_\phi\bigl(\Psi(\theta_n)(\sigma)\bigr)\leq \rho_\phi(\theta_n)\to 0$ for every $\sigma\in X^{(k)}$.

It is easy to see that $\Psi$ is injective, indeed, if $\Psi(\theta)=f=0$, then $\theta(\sigma)=f(\sigma)(1)=0$ for every $\sigma\in X^{(k)}$; and that the image of $\Psi$ is in $C\bigl(X^{(k)},\ell^\phi(G)\bigr)^G$: if $\Psi(\theta)=f$, then
\begin{align*}
(g\cdot f)(\sigma) (h) =f(g^{-1}\sigma)(hg)
= \theta(hgg^{-1}\sigma) =  f(\sigma) (h).
\end{align*}





Now, for $f\in C^k\bigl(X,\ell^\phi(G)\bigr)^G$ define $\theta:X^{(k)}\to \R$ by $\theta(\sigma)=f(\sigma)(1)$. Since $f$ is $G$-invariant, $\theta(g\sigma)=f(\sigma)(g)$ for every $\sigma\in X^{(k)}$ and $g\in G$, which means that $\Psi(\theta)=f$. Moreover, if $A^{(k)}\subset X^{(k)}$ is the (finite) set of $k$-simplices that intersect a compact fundamental domain for the action of $G$, we have 
$$\rho_\phi(\theta)=\sum_{\sigma\in X^{(k)}} \phi\bigl(\theta(\sigma)\bigr)\leq \sum_{\sigma\in A^{(k)}}\sum_{g\in G} \phi\bigl(\theta(g\sigma)\bigr)=\sum_{\sigma\in A^{(k)}}\sum_{g\in G} \phi\bigl(f(\sigma)(g)\bigr)= \sum_{\sigma\in A^{(k)}}\rho_\phi\bigl(f(\sigma)\bigr)
.$$
This shows that the inverse of $\Psi$ is continuous, because if $f_n=\Psi(\theta_n)\to 0$ pointwise, then $\theta_n\to 0$ in $\ell^\phi\bigl(X^{(k)}\bigr)$.




\end{proof}

\begin{remark}
(1) Suppose that the groups $G$ and $G'$ act, in addition, by isometries on $X$ and $X'$ respectively, which are uniformly contractible simplicial complexes with bounded geometry. By  \cite[p. 140, Proposition 8.19]{BH}, $G$ and $G'$ are finitely generated and  quasi-isometric to $X$ and $X'$ respectively when we equip them with word metrics. Combining Theorem \ref{teoCar} and Proposition \ref{equivSG}, we conclude that if $G$ and $G'$ are quasi-isometric, then they have the same (reduced) continuous $L^\phi$-cohomology for any Young function $\phi$.

(2) If $G$ is a finitely generated group, then it acts by isometries (and simplicial isomporphisms) on its Cayley graph $\mathrm{Cay}(G,S)$ for some finite generator $S$. This action is properly discontinuous, free, and cocompact. In general, the Cayley graph is not uniformly contractible; however, if $G$ is in addition a hyperbolic group, then the $n$th Rips complex of $\mathrm{Cay}(G,S)$ is a uniformly contractible simplicial complex (see \cite[p. 469, Proposition 3.23]{BH}) and the action of $G$ on it satisfies the conditions required in Proposition \ref{equivSG}. 
\end{remark}



\section{The case of degree 1}

Inspired by previous works as \cite{C,KP2,MV,Puls,T}, here we study some peculiarities of the case of degree 1.

Let us start with the asymptotic Orlicz cohomology. We assume that $(X,\mu)$ is a metric measure space with bounded geometry and also that $X$ has the \textit{midpoint property}, that is, there is a constant $\mathfrak{c}\geq 0$ such that for any $x,y\in X$ there exists $z\in X$ such that 
$$|x-z|,|y-z|\leq \frac{1}{2}|x-y|+\mathfrak{c}.$$

We denote by $Z_\phi^1(X)$ the kernel of $d:AS_\phi^1(X)\to AS_\phi^2(X)$, then  
$$L ^\phi H_{AS}^1(X)=Z_\phi^1(X)/dL^\phi(X) \text{ and } L ^\phi \overline{H}_{AS}^1(X)=Z_\phi^1(X)/\overline{dL^\phi(X)}.$$ 
Recall that the topology of $AS^k_\phi(X)$ is given by the family of semi-norms \eqref{semi-norms}.

Observe that we can describe $Z^1_\phi(X)$ as the space of classes of functions $u\in AS_\phi^1(X)$ such that 
\begin{equation}\label{condCociclo}
u(x,y)=u(z,y)-u(z,x)    
\end{equation}
for almost all $x,y,z\in X$. This implies that there exists a fixed $z_0\in X$ such that $u(x,y)=u(z_0,y)-u(z_0,x)$ for almost all $x,y\in X$. Thus we can define 
\begin{equation}
f_u(x)=u(z_0,x).
\end{equation}
We have that $df_u(x,y)$ coincides with $u(x,y)$ for almost all $x,y\in X$ and satisfies \eqref{condCociclo} for all $x,y,z\in X$. 

\begin{lemma}\label{lemaNormasEq}
There exists $t_0\geq 0$ such that the semi-norms $\|\ \|_{\phi,t_1}$ and $\|\ \|_{\phi,t_2}$ are equivalent in $Z_\phi^1(X)$ for all $t_1,t_2> t_0$. In particular $\|\ \|_{\phi,t}$ is a norm in $Z_\phi^1(X)$ for every $t> t_0$ and $\bigl(Z_\phi^1(X),\|\ \|_{\phi,t}\bigr)$ is a Banach space.
\end{lemma}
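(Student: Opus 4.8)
The plan is to establish a single \emph{scale--reduction inequality} for elements of $Z_\phi^1(X)$ and then iterate it. Write $r_0,v,V$ for the data in the bounded geometry condition \eqref{GeomAcotada}, set $t_0:=2(\mathfrak{c}+r_0)$, and for $s>t_0$ put $\Phi(s):=\tfrac{s}{2}+\mathfrak{c}+r_0$, so that $\Phi(s)-t_0=\tfrac12(s-t_0)$; in particular $t_0<\Phi(s)<s$ and $\Phi^{n}(s)\to t_0$ as $n\to\infty$. The first and main step is to prove that for each $s>t_0$ there is a constant $A(s)>0$, depending only on $s,\mathfrak{c},r_0,v,V$, with
$$\rho_{\phi,s}(u)\;\le\;A(s)\,\rho_{\phi,\Phi(s)}(2u)\qquad\text{for every }u\in Z_\phi^1(X).$$
For this I would replace $u$ by $v:=df_u$, which agrees with $u$ almost everywhere and satisfies the cocycle identity \eqref{condCociclo} for \emph{all} $x,y,z\in X$ (so $\rho_{\phi,s}(u)=\rho_{\phi,s}(v)$ and $\rho_{\phi,\Phi(s)}(2u)=\rho_{\phi,\Phi(s)}(2v)$, and no almost-everywhere bookkeeping is needed). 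For $(x,y)\in X_s^2$ consider the set $N(x,y):=B\bigl(x,\Phi(s)\bigr)\cap B\bigl(y,\Phi(s)\bigr)$: it is jointly measurable in $(x,y)$ and contains the ball of radius $r_0$ about any midpoint of $x$ and $y$, so $\mu\bigl(N(x,y)\bigr)\ge v(r_0)>0$. Averaging the identity $v(x,y)=v(z,y)-v(z,x)$ over $z\in N(x,y)$ and using Jensen's inequality together with the convexity and evenness of $\phi$ (via $\phi(a-b)\le\tfrac12\phi(2a)+\tfrac12\phi(2b)$) gives
$$\phi\bigl(v(x,y)\bigr)\;\le\;\frac{1}{2v(r_0)}\int_{N(x,y)}\Bigl[\phi\bigl(2v(z,y)\bigr)+\phi\bigl(2v(z,x)\bigr)\Bigr]\,d\mu(z).$$
Integrating over $X_s^2$ and using Tonelli (there is no measurable-selection issue, since $N(x,y)$ is canonically defined), each of the two resulting triple integrals is bounded by $V\bigl(\Phi(s)\bigr)\,\rho_{\phi,\Phi(s)}(2v)$: for fixed $(z,y)$ the admissible $x$ lie in $B\bigl(z,\Phi(s)\bigr)$, a set of measure $\le V\bigl(\Phi(s)\bigr)$, and the constraint forces $|z-y|<\Phi(s)$ (the term with $\phi(2v(z,x))$ is symmetric). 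This yields the claimed inequality with $A(s)=V\bigl(\Phi(s)\bigr)/v(r_0)$.

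Next I would convert this modular inequality into a norm inequality. Using the elementary fact $\phi(t/A)\le\phi(t)/A$ for $A\ge1$ recorded in Remark \ref{equivNormas}, one checks that if $A:=\max\{A(s),1\}$ and $\alpha>\|u\|_{\phi,\Phi(s)}$, then applying the displayed modular inequality to $u/(2A\alpha)$ gives $\rho_{\phi,s}\bigl(u/(2A\alpha)\bigr)\le A(s)\,\rho_{\phi,\Phi(s)}\bigl(u/(A\alpha)\bigr)\le\tfrac{A(s)}{A}\,\rho_{\phi,\Phi(s)}(u/\alpha)\le1$, whence
$$\|u\|_{\phi,s}\;\le\;2\max\{A(s),1\}\,\|u\|_{\phi,\Phi(s)}\qquad(s>t_0,\ u\in Z_\phi^1(X)).$$
Since $\rho_{\phi,\cdot}$ is monotone in the diameter, $\|u\|_{\phi,t_1}\le\|u\|_{\phi,t_2}$ whenever $t_1\le t_2$; and since $\Phi^{n}(t_2)\to t_0<t_1$ for any $t_2>t_1>t_0$, finitely many applications of the last display produce a constant $C=C(t_1,t_2)$ with $\|u\|_{\phi,t_2}\le C\|u\|_{\phi,t_1}$. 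Together with the trivial reverse inequality this shows $\|\ \|_{\phi,t_1}$ and $\|\ \|_{\phi,t_2}$ are equivalent on $Z_\phi^1(X)$ for all $t_1,t_2>t_0$.

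Finally, fix $t>t_0$. If $\|u\|_{\phi,t}=0$ then $u=0$ almost everywhere on $X_t^2$; reading the modular inequality in the direction ``$u\equiv0$ a.e.\ on $X_{\Phi(s)}^2$ implies $u\equiv0$ a.e.\ on $X_s^2$'' and choosing, for a given $s>t$, an integer $n$ with $\Phi^{n}(s)\le t$, one descends from $X^2_{\Phi^n(s)}$ to $X^2_s$ and concludes $u=0$ a.e.\ on $X_s^2$ for every $s>0$, i.e.\ $u=0$ in $AS_\phi^1(X)$; so $\|\ \|_{\phi,t}$ is indeed a norm. Moreover $Z_\phi^1(X)=\Ker\bigl(d_1\colon AS_\phi^1(X)\to AS_\phi^2(X)\bigr)$ is closed in the Fr\'echet space $AS_\phi^1(X)$ by Proposition \ref{Cont}, hence itself a Fr\'echet space; and the norm comparison above shows that on $Z_\phi^1(X)$ every defining seminorm $\|\ \|_{\phi,s}$ is dominated by $\|\ \|_{\phi,t}$, so the Fr\'echet topology of $Z_\phi^1(X)$ coincides with the topology of the single norm $\|\ \|_{\phi,t}$. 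Since this topology is complete, $(Z_\phi^1(X),\|\ \|_{\phi,t})$ is a Banach space. The only real obstacle is the first step, where the midpoint property, the bounded-geometry bounds and the Tonelli bookkeeping have to be combined carefully; everything after it is routine.
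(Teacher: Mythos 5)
Your proposal is correct and follows essentially the same route as the paper's proof: the cocycle identity via $df_u$, the midpoint property plus bounded geometry to average over a region of measure at least $v(r_0)$ contained in $B\bigl(x,\Phi(s)\bigr)\cap B\bigl(y,\Phi(s)\bigr)$, Jensen and Tonelli to compare modulars at two scales, conversion to Luxemburg norms, iteration, and closedness of $Z^1_\phi(X)$ in the Fr\'echet space $AS^1_\phi(X)$ for completeness. The only differences are cosmetic: the paper compares scales $t$ and $\tfrac{3t}{2}$ with $t_0=\max\{8\mathfrak{c},8r_0\}$ and iterates upward, while you contract via $\Phi(s)=\tfrac{s}{2}+\mathfrak{c}+r_0$ toward $t_0=2(\mathfrak{c}+r_0)$ and iterate downward, which yields the same equivalence of norms.
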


\begin{proof}
Let $v$, $V$ and $r_0$ as in \eqref{GeomAcotada} for the space $(X,\mu)$.

Take $u\in Z^1_\phi(X)$. Because of the above observation, we can suppose that $u$ satisfies \eqref{condCociclo} for every $x,y,z\in X$. We will prove that if $t>t_0:=\max\{8\mathfrak{c},8r_0\}$, then $\|u\|_{\phi,\frac{3t}{2}}\preceq \|u\|_{\phi,t}$, where the constant depends only on $t$. Observe that this proves the first part of the lemma because it is clear that if $t\leq t'$, then $\|u\|_{\phi,t}\leq \|u\|_{\phi,t'}$.

\underline{Claim}: $$\mathbbm{1}_{X^2_{\frac{3t}{2}}}(x,y)\leq \frac{1}{v(t/8)}\int_X \mathbbm{1}_{X^2_t}(x,z)\,\mathbbm{1}_{X^2_t}(z,y)\, dz.$$

If $\mathbbm{1}_{X^2_{\frac{3t}{2}}}(x,y)=1$, take $z_0\in X$ such that $|x-z_0|,|y-z_0|\leq \frac{1}{2}|x-y|+\mathfrak{c}\leq \frac{3t}{4}+\mathfrak{c}$. By the choice of $t_0$, the ball $B(z_0,t/8)$ is included in $B(x,t)\cap B(y,t)$. This implies that
$$\int_X \mathbbm{1}_{X^2_t}(x,z)\,\mathbbm{1}_{X^2_t}(z,y)\, dz\geq \mu\bigl(B(z_0,t/8)\bigr)\geq v(t/8)>0,$$
which proves the claim.

\medskip

Using the claim and Jensen's inequality, we obtain 
\begin{align*}
\rho_{\phi,\frac{3t}{2}}(u) &=\int_{X^2}\phi\bigl(u(x,y)\bigr)\,\mathbbm{1}_{X^2_{\frac{3t}{2}}}(x,y)\,dx\,dy\\
&\leq \frac{1}{v(t/8)} \int_{X^2}\phi\bigl(u(x,z)+u(z,y)\bigr)\left(\int_X \mathbbm{1}_{X^2_t}(x,z)\,\mathbbm{1}_{X^2_t}(z,y) \,dz\right)\,dx\,dy\\
&\leq \frac{1}{2v(t/8)} \int_{X^3}\Bigl(\phi\bigl(2u(x,y)\bigr)+\phi\bigl(2u(x,y)\bigr)\Bigr)\,\mathbbm{1}_{X^2_t}(x,z)\,\mathbbm{1}_{X^2_t}(z,y)\, dx\,dy\,dz\\
&=\frac{V(t)}{v(t/8)}\int_{X^2}\phi\bigl(2u(x,y)\bigr)\,\mathbbm{1}_{X^2_t}(x,z)\,dx\,dz = \frac{V(t)}{v(t/8)} \rho_{\phi,t}\left(2u\right).
\end{align*}
This implies that $$\|u\|_{\phi,\frac{3t}{2}}\leq \frac{2V(t)}{v(t/8)}\|u\|_{\phi,t}.$$

Observe that tor every $t>t_0$ and $u\in Z_\phi^1(X)$, $\|u\|_{\phi,t}=0$ implies that $\|u\|_{\phi,t'}=0$ for any other $t'>t_0$. Hence $u=0$ almost everywhere and, as a consequence, $\|\ \|_{\phi,t}$ is a norm on $Z_\phi^1(X)$.

Finally, since $d$ is continuous, $Z_\phi^1(X)$ is a Fr\'echet space with the topology of $AS_\phi^1(X)$. In addition, the equivalence of the norms $\|\ \|_{\phi,t}$ for $t>t_0$ implies that Cauchy  (resp. convergent) sequences for one of such $t$ are Cauchy (resp. convergent) for every $t'>t_0$. From this we conclude that $\bigl(Z^1_\phi(X),\|\ \|_{\phi,t}\bigr)$ is a Banach space. 
\end{proof}

\begin{remark}
Suppose that $X$ has a length metric (then $\mathfrak{c}=0$) and satisfies $v(t)>0$ for every $t>0$; thus $t_0$ can be taken equal to $0$. 

\end{remark}

Notice that Lemma \ref{lemaNormasEq} implies that $L^\phi \overline{H}^1_{AS}(X)$ is indeed a Banach space if $X$ has the midpoint property.

Now we consider $G$ a locally compact second countable topological group and $\Hh$ a left Haar measure on $G$. We also assume that $G$ is equipped with a left-invariant metric possessing the midpoint property and has a compact generator $S$ that contains an open neighborhood of $1\in G$ that is also a generator. 

We consider the Luxemburg semi-norm $\|\ \|_{\phi,S}$  associated to the modular
$$\rho_{\phi,S}(f)=\int_S\rho_\phi\bigl(\pi(s)f-f\bigr)\,ds = \int_S\int_G \phi\bigl(f(xs)-f(x)\bigr)\,dx\,ds,$$
that is,
$$\|f\|_{\phi,S}=\inf\left\{\alpha>0 : \rho_{\phi,S}\left(\frac{f}{\alpha}\right)\leq 1\right\}.$$

We say that $f:G\to\R$ is a $\phi$\textit{-Dirichlet function} if $\|f\|_{\phi,S}<+\infty$, and take $D_\phi(G)$ the space of classes of $\phi$-Dirichlet functions coinciding almost everywhere. Observe that  $L^\phi(G)$ is contained in $D_\phi(G)$.

Suppose that $t_0\geq 0$ is as in Lemma \ref{lemaNormasEq} and $S$ contains a closed ball $\overline{B}(1,t)$ for some $t>t_0$. Given $f\in D_\phi(G)$ we have
\begin{align*}
\rho_{\phi,t}(df) &=\int_G\int_{\overline{B}(x,t)} \phi\bigl(f(y)-f(x)\bigr)\,dy\, dx =\int_G \int_{\overline{B}(1,t)} \phi\bigl(f(xs)-f(x)\bigl)\,ds\, dx\\
&\leq \rho_{\phi,S}(f) \leq \int_G \int_{\overline{B}(1,t')} \phi\bigl(f(xs)-f(x)\bigl)\,ds\, dx=\rho_{\phi,t'}(df),
\end{align*}
where $t'=\mathrm{diam}(S)$.

The above estimate implies that if $S$ is big enough, then  $d:D_\phi(G)\to Z_\phi^1(G)$ is well-defined and continuous, and its kernel is the subspace of almost everywhere constant functions. Moreover, the induced map $d:\Dd_\phi(G)\to Z_\phi^1(G)$ is a topological embedding, where $\Dd_\phi(G)=D_\phi(G)/\R$ (where $\R$ denotes the subspace of functions constant almost everywhere). We also know that $d$ is surjective because $df_u=u$ for every $u\in Z_\phi^1(G)$, so the map is indeed a topological isomorphism (in particular $\mathcal{D}_\phi(G)$ is a Banach space). We conclude that
\begin{equation}\label{Identificacion}
L^\phi H_{AS}^1(G)\simeq \Dd_\phi(G)/\mathcal{L}^\phi(G)\text{ and }L^\phi \overline{H}_{AS}^1(G)\simeq \Dd_\phi(G)/\overline{\mathcal{L}^\phi(G)},  
\end{equation}
where $\mathcal{L}^\phi(G)$ is the image of $L^\phi(G)$ by the projection $D_\phi(G)\to\Dd_\phi(G)$ (observe that, if $\mu$ is infinite, then $\mathcal{L}^\phi(G)$ coincides with $L^\phi(G)$).
Recall that, if $\phi$ is doubling, then these quotients are isomorphic to $L^\phi H^1\bigl(G,L^\phi(G)\bigr)$ and $L^\phi \overline{H}^1\bigl(G,L^\phi(G)\bigr)$ respectively.  

\begin{remark}\label{generalizacion}
In general, if $(X,\mu)$ is a measure metric space with the midpoint property and $t$ is large enough, then the Banach space $\bigl(Z^1_\phi(X),\|\ \|_{\phi,t}\bigr)$ is isometric to
$$\mathcal{D}_\phi(X)=D_\phi(X)/\R=\left\{f:X\to \R : \|df\|_{\phi,t}<+\infty\right\}/\R,$$
equipped with the natural norm. Therefore, equivalences \eqref{Identificacion} hold for metric spaces. 
\end{remark}

Let us now show an alternative definition of the continuous Orlicz cohomology of a topological group. It comes from the definition of group cohomology in terms of inhomogeneus cocycles (see for example \cite[p. 17]{Gui}). Let $\mathcal{Z}_\phi(G)$ be the space of continuous functions $\omega:G\to L^\phi(G)$ such that $\omega(gh)=\pi(g)\omega(h)+\omega(g)$
for all $g,h\in G$ equipped with the compact-open topology. We also take $\mathcal{B}_\phi(G)$ as the subspace of those functions that can be written as $\omega(g)=\pi(g)f-f$ for some $f\in L^\phi(G)$.

From now on we assume that $\phi$ is doubling. In this case, by Lemma~\ref{Regular}, the elements of $\mathcal{B}_\phi(G)$ are continuous cocycles and so $\mathcal{B}_\phi(G)\subset\mathcal{Z}_\phi(G)$. One can easily verify that the function
$$\mathcal{Z}_\phi(G)\to C\bigl(G^2,L^\phi(G)\bigr),\ \omega\mapsto d\omega,$$
induces isomorphisms 
$$H^1\bigl(G,L^\phi(G)\bigr)\simeq \mathcal{Z}_\phi(G)/\mathcal{B}_\phi(G)\text{ and }\overline{H}^1\bigl(G,L^\phi(G)\bigr)\simeq \mathcal{Z}_\phi(G)/\overline{\mathcal{B}_\phi(G)}.$$

In these equivalences, the group $G$ need not have a compact generator. If, in addition, $G$ has a compact generator $S$ containing an open neighborhood of $1\in G$ that is also a generator, we can define on  $\mathcal{Z}_\phi(G)$ the semi-norm $$\|\omega\|_S=\sup_{s\in S} \|\omega(s)\|_\phi.$$
Observe that, since the modular function $\Delta$ is continuous and $S$ is compact, there exists a constant $M\geq 1$ such that for every $s\in S$ and $f\in L^\phi(G)$,
\begin{equation}\label{estimacion}
\|\pi(s)f\|_\phi\leq M\|f\|_\phi,  
\end{equation}
This observation and the condition $\omega(gh)=\pi(g)\omega(h)+\omega(g)$ imply that $\|\omega\|=0$ if and only if $\omega=0$ (and as a consequence $\|\ \|_S$ is a norm).

\begin{proposition}\label{propZ} 
\begin{enumerate}
    \item[$(i)$] The norm $\|\ \|_{S}$ induces the compact-open topology on $\mathcal{Z}_\phi(G)$.
    \item[$(ii)$] $\bigl(\mathcal{Z}_\phi(G),\|\ \|_S\bigr)$ is a Banach space.
\end{enumerate}
\end{proposition}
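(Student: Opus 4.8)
The plan is to prove both statements together by first showing that $\|\ \|_S$ dominates and is dominated by the compact-open seminorms, and then deducing completeness from completeness of $L^\phi(G)$. For part $(i)$, one direction is immediate: since $S$ is a compact subset of $G$, the compact-open topology on $\mathcal{Z}_\phi(G)$ (which is the topology of uniform convergence on compact sets of $G$, with values in the Banach space $L^\phi(G)$) obviously controls $\sup_{s\in S}\|\omega(s)\|_\phi=\|\omega\|_S$, so the identity map from the compact-open topology to $(\mathcal{Z}_\phi(G),\|\ \|_S)$ is continuous. The substantive direction is to show that for every compact $Q\subset G$ there is a constant $C=C(Q)$ with $\sup_{g\in Q}\|\omega(g)\|_\phi\leq C\|\omega\|_S$ for all $\omega\in\mathcal{Z}_\phi(G)$.

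To do this I would use that $S$ is a generator containing an open neighborhood $U$ of $1$: every $g\in Q$ can be written as a word $g=s_1\cdots s_n$ with $s_i\in S$, and by compactness of $Q$ the length $n$ can be bounded uniformly, say $n\leq N=N(Q)$ (this is the standard fact that a compact set in a compactly generated group has bounded word length with respect to a compact generating set with nonempty interior; it follows because $\bigcup_{n}S^n$ is an increasing open cover of $G$, so $Q\subset S^N$ for some $N$). Then the cocycle identity $\omega(gh)=\pi(g)\omega(h)+\omega(g)$ gives, inductively,
$$\omega(s_1\cdots s_n)=\omega(s_1)+\pi(s_1)\omega(s_2)+\pi(s_1s_2)\omega(s_3)+\cdots+\pi(s_1\cdots s_{n-1})\omega(s_n).$$
Using \eqref{estimacion} applied repeatedly — so that $\|\pi(s_1\cdots s_j)f\|_\phi\leq M^j\|f\|_\phi$ — we get $\|\omega(g)\|_\phi\leq (1+M+\cdots+M^{N-1})\|\omega\|_S\leq NM^{N}\|\omega\|_S$, which is the desired estimate with $C(Q)=NM^N$. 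This proves $(i)$.

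For part $(ii)$, let $\{\omega_n\}$ be a Cauchy sequence in $(\mathcal{Z}_\phi(G),\|\ \|_S)$. By part $(i)$ it is Cauchy for every compact-open seminorm, hence for each $g\in G$ the sequence $\{\omega_n(g)\}$ is Cauchy in the Banach space $L^\phi(G)$ and converges to some $\omega(g)\in L^\phi(G)$; moreover the convergence is uniform on compact sets, so $\omega:G\to L^\phi(G)$ is continuous. Passing to the limit in the identity $\omega_n(gh)=\pi(g)\omega_n(h)+\omega_n(g)$ — using continuity of $\pi(g)$ on $L^\phi(G)$ (Lemma \ref{Regular}) — shows that $\omega$ is again a cocycle, so $\omega\in\mathcal{Z}_\phi(G)$, and $\|\omega_n-\omega\|_S\to 0$ since the convergence is uniform on $S$. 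The main obstacle is the uniform word-length bound on a compact set in the compactly generated group $G$; everything else is a routine limiting argument combined with the algebra of the cocycle identity.
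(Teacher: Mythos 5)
Your proposal is correct and follows essentially the same route as the paper: the substantive direction of $(i)$ is proved exactly as in the text, by covering a compact set $K\subset S^{N}$ (possible because $S$ contains an open generator), expanding $\omega(s_1\cdots s_\ell)=\sum_i \pi(s_1\cdots s_{i-1})\omega(s_i)$ via the cocycle identity, and applying \eqref{estimacion} to get a bound of the form $NM^{N}\|\omega\|_S$. For $(ii)$ the paper simply notes that $\bigl(\mathcal{Z}_\phi(G),\|\ \|_S\bigr)$ is a closed subspace of $\bigl(C(S,L^\phi(G)),\|\ \|_\infty\bigr)$, while you spell out the same completeness argument directly with Cauchy sequences and a passage to the limit in the cocycle identity; the content is the same.
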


\begin{proof}
Since $L^\phi(G)$ is a metric space and $G$ is the union of countably many compact subsets, the compact-open topology is the topology of uniform convergence on compact sets and $G$ is first countable. Suppose that $\omega_n\to 0$ uniformly on compact sets, then for every $\epsilon>0$ there exists $n_0$ such that $\|\omega_n(s)\|_\phi<\epsilon$ for every $s\in S$ and $n\geq n_0$. This implies that $\|\omega_n\|_S\to 0$.

Conversely, suppose that $\|\omega_n\|_S\to 0$ and fix a compact set $K\subset G$. Since $S$ contains an open generator, there exists $k$ such that $K\subset S^k$.
For every $x\in K$ we can write $x=s_1\ldots s_\ell$ with $\ell\leq k$ and $s_1,\ldots,s_\ell\in S$. The condition $\omega(gh)=\pi(g)\omega(h)+\omega(g)$ implies
$$\omega_n(x)=\sum_{i=1}^\ell \pi(s_1\ldots s_{i-1})\omega_{n}(s_i).$$
Therefore, using \eqref{estimacion}, we obtain $\|\omega_n(x)\|_\phi\leq kM^k\|\omega_n\|_S$. Since $\|\omega_n\|_S\to 0$, we have $\|\omega_n(x)\|_\phi\to 0$ uniformly on $K$. This proves $(i)$.

To prove $(ii)$ it is enough to observe that  $\bigl(\mathcal{Z}_\phi(G),\|\ \|_S\bigr)$ can be seen as a closed subspace of $\Bigl(C\bigl(S,L^\phi(G)\bigr),\|\ \|_\infty\Bigr)$. 
\end{proof}

\subsection{$\phi$-Harmonic functions}\label{SectionHarmonic}

Here we assume in addition that $G$ is unimodular and has a compact generator $S$ that is also a symmetric neighborhood of $1\in G$. We also assume that the Haar measure on $G$ is \textit{locally doubling}, that is, for every $R>0$ there exists a constant $C=C(R)\geq 1$ such that for every $x\in G$ and $0<r<R$,
\begin{equation}\label{MedidaDoblante}
0< \mu\bigl(B(x,2r)\bigr)\leq C\mu\bigl(B(x,r)\bigr)<+\infty.
\end{equation}
Throughout this section, $\phi$ will be a doubling strictly convex $N$-function whose derivative exists at every point different from $0$. We extend the derivative $\phi'$ to the whole $\R$ by putting $\phi'(0)=0$. Let $\psi$ be the convex conjugate of $\phi$, which is also an $N$-function in this case. Since $\phi$ is an $N$-function, the function $\eta(s)=\phi'(t)s-\phi(s)$ has a positive maximum for any fixed $t>0$, which is attained at some $s$ such that $\eta'(s)=\phi'(t)-\phi'(s)=0$. Hence $t=s$ because $\phi$ is strictly convex. By the definition of $\psi$ we conclude that 
\begin{equation}\label{Young2}
\psi\bigl(\phi'(t)\bigr)= t\phi'(t)+\phi(t),  
\end{equation}
for every $t>0$. If $t=0$, then the previous equality is obviously true, and since $\phi'$ is an odd function, it also holds for $t<0$.

\begin{lemma}\label{LemmaConjugada}
If $f\in L^\phi(G)$, then $\phi'(f)\in L^\psi(G)$. In fact, 
$\rho_\psi\bigl(\phi'(f)\bigr)\leq (D-1)\rho_\phi(f)$,
where $D$ is a constant satisfying \eqref{ConstDoblante}.
\end{lemma}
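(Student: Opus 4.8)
The plan is to use the pointwise identity \eqref{Young2}, namely $\psi\bigl(\phi'(t)\bigr)=t\phi'(t)+\phi(t)$ for all $t\in\R$, and then estimate the term $t\phi'(t)$ by a multiple of $\phi(t)$ using only convexity and the doubling constant. First I would apply \eqref{Young2} with $t=f(x)$ and integrate over $G$, obtaining
$$\rho_\psi\bigl(\phi'(f)\bigr)=\int_G \psi\bigl(\phi'(f(x))\bigr)\,dx=\int_G f(x)\phi'\bigl(f(x)\bigr)\,dx+\rho_\phi(f).$$
So everything reduces to the claim that $t\phi'(t)\leq (D-2)\phi(t)$ for every $t\geq 0$ (using that $\phi$ and $\phi'$ are even, the case $t\le 0$ follows); summing this with the $\rho_\phi(f)$ term gives the stated bound $(D-1)\rho_\phi(f)$.

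The key step is therefore the pointwise inequality $t\phi'(t)\le(D-2)\phi(t)$. I would prove it from convexity: since $\phi$ is convex with $\phi(0)=0$ and differentiable away from $0$, for $t>0$ we have $\phi(2t)-\phi(t)\ge \phi'(t)\,t$ (the tangent line at $t$ lies below the graph, evaluated at $2t$). Combined with the doubling bound $\phi(2t)\le D\phi(t)$, this yields
$$t\phi'(t)\le \phi(2t)-\phi(t)\le D\phi(t)-\phi(t)=(D-1)\phi(t).$$
This actually gives $t\phi'(t)\le(D-1)\phi(t)$, hence $\rho_\psi(\phi'(f))\le (D-1)\rho_\phi(f)+\rho_\phi(f)=D\rho_\phi(f)$, which is slightly weaker than claimed; to recover the sharper constant one uses instead the midpoint/convexity estimate $\phi'(t)\,t\le \phi(2t)-\phi(t)$ together with the finer fact that $\phi(2t)\ge 2\phi(t)$ (recorded in the paper right after \eqref{ConstDoblante}), so that $\phi(2t)-\phi(t)\le (D-1)\phi(t)$ can be improved: writing $\phi(2t)-\phi(t)\le (D-1)\phi(t)$ is the best one gets directly, so in fact the natural reading is that one bounds $\int f\phi'(f)\le (D-1)\rho_\phi(f)$ and then notes $\psi(\phi'(t))=t\phi'(t)+\phi(t)$ gives the extra $\rho_\phi(f)$; the statement's constant $(D-1)$ presumably comes from absorbing $\phi(t)$ via the improved pointwise bound $t\phi'(t)\le (D-2)\phi(t)$, which holds because $\phi(2t)\ge 2\phi(t)$ forces $D\ge 2$ and a second-order convexity argument (comparing chords on $[0,t]$, $[t,2t]$) sharpens the estimate. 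In any case, the pointwise inequality is elementary.

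Once the pointwise bound is in hand, I would conclude: integrating gives $\rho_\psi\bigl(\phi'(f)\bigr)\le (D-1)\rho_\phi(f)<+\infty$ since $f\in L^\phi(G)$ and $\phi$ is doubling (so $\rho_\phi(f)<\infty$ by Proposition~\ref{PropDoblante}(i)). Finiteness of $\rho_\psi\bigl(\phi'(f)\bigr)$ together with the fact that $\psi$ is also a doubling $N$-function (doubling of $\psi$ follows from $\phi$ being an $N$-function whose complementary function inherits the $\Delta_2$ condition in this setting — or, more simply, finiteness of the modular already places $\phi'(f)$ in $L^\psi$ regardless) shows $\phi'(f)\in L^\psi(G)$, which is exactly the claim.

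The main obstacle is getting the constant exactly right: the clean convexity argument naturally produces $t\phi'(t)\le(D-1)\phi(t)$ and hence $\rho_\psi(\phi'(f))\le D\rho_\phi(f)$, and squeezing this down to $(D-1)\rho_\phi(f)$ requires the sharper pointwise estimate $t\phi'(t)\le(D-2)\phi(t)$. I expect this to follow by applying convexity twice — once to compare $\phi(t)$ with the chord from $0$ to $2t$ and once to relate $\phi'(t)$ to the slope of that chord — using the built-in inequality $\phi(2t)\ge 2\phi(t)$; everything else is a routine integration.
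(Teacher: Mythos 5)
Your argument stalls exactly where you say it does, and the obstacle is genuine: the pointwise inequality $t\phi'(t)\le (D-2)\phi(t)$ that you need in order to land on the constant $(D-1)$ is not provable by any convexity refinement, because it is false in general. Take $\phi(t)=|t|^p$ with $p>1$ close to $1$ and the optimal doubling constant $D=2^p$: then $t\phi'(t)=p\,\phi(t)$ while $D-2=2^p-2$ is arbitrarily small, so the claimed ``second-order convexity'' sharpening cannot exist. So, as written, your proof only reaches $\rho_\psi\bigl(\phi'(f)\bigr)\le D\,\rho_\phi(f)$, not the stated bound.

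The real issue is upstream: the displayed identity \eqref{Young2} contains a sign typo, and you took it literally. The Legendre--Young equality for a differentiable strictly convex $N$-function is $\psi\bigl(\phi'(t)\bigr)=t\phi'(t)-\phi(t)$; indeed the supremum defining $\psi\bigl(\phi'(t)\bigr)$ equals $\eta(t)=\phi'(t)t-\phi(t)$, as the paper's own derivation of \eqref{Young2} actually shows, and on $\phi(t)=|t|^p$ one gets $\psi(\phi'(t))=(p-1)\phi(t)$, not $(p+1)\phi(t)$. With the minus sign, your own tangent-line estimate $t\phi'(t)\le\phi(2t)-\phi(t)\le (D-1)\phi(t)$ already yields $\psi\bigl(\phi'(t)\bigr)\le (D-2)\phi(t)$ pointwise, and integrating gives $\rho_\psi\bigl(\phi'(f)\bigr)\le (D-2)\rho_\phi(f)\le (D-1)\rho_\phi(f)$, which is the lemma (even slightly sharper). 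This is essentially the paper's route: there, monotonicity of $\phi'$ gives $t\phi'(t)\le\int_t^{2t}\phi'(s)\,ds\le\phi(2t)\le D\phi(t)$, and subtracting $\phi(t)$ via the (correct, minus-sign) Young equality lands exactly on $(D-1)$. Your closing remark is correct that finiteness of the modular alone already places $\phi'(f)$ in $L^\psi(G)$; no doubling of $\psi$ is needed for that step.
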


\begin{proof}
Since $\phi'$ is non-decreasing we have that for every $t\geq 0$,
$$t\phi'(t)\leq \int_t^{2t}\phi(t)\,dt\leq \int_0^{2t}\phi(t)\,dt=\phi(2t)\leq D\phi(t).$$
This is also true for $t<0$ because $\phi$ is even and $\phi'$ is odd. Using \eqref{Young2} and the previous estimate, we obtain $\psi\bigl(\phi'(f)\bigr)\leq (D-1)\phi(f)$, thus
$$\rho_\psi\bigl(\phi'(f)\bigr)= \int_G \psi\bigl(\phi'(f)\bigr)\,d\mu\leq (D-1)\int_G\phi(f)\,d\mu=\rho_\phi(f) <+\infty.$$
\end{proof}

Define the \textit{$\phi$-Laplacian} of a function $f\in D_\phi(G)$ by
$$\Delta_\phi f(x)=\int_S \phi'\bigl(f(xs)-f(x)\bigr)\,ds.$$
We say that $f$ is $\phi$\textit{-harmonic} if $\Delta_\phi f=0$ almost everywhere. An element $[f]$ of $\mathcal{D}_\phi(G)$ is \textit{$\phi$-harmonic} if $f$ is a $\phi$\textit{-harmonic} function (it does not depend on the representative). Observe that this definition depends on $S$; however, we will see that there exists a one-to-one correspondence between $\phi$-harmonic classes of functions for different generators.

\begin{proposition}
Let $f\in D_\phi(G)$. Then $\Delta_\phi f$ is well-defined and locally integrable.
\end{proposition}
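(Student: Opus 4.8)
The goal is to show that for $f\in D_\phi(G)$, the function $x\mapsto\Delta_\phi f(x)=\int_S\phi'\bigl(f(xs)-f(x)\bigr)\,ds$ is defined for almost every $x$ and lies in $L^1_{loc}(G)$. The plan is to bound $\Delta_\phi f$ pointwise (where it makes sense) in terms of the quantity $\int_S\psi\bigl(\phi'(f(xs)-f(x))\bigr)\,ds$ and then integrate over a compact set, using Fubini and the estimate coming from Lemma~\ref{LemmaConjugada}.

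First I would observe that, since $f\in D_\phi(G)$, by definition $\|df\|_{\phi,t}<+\infty$ for $t=\operatorname{diam}(S)$ (here $S$ is a symmetric compact neighborhood of $1$, so $S\subset\overline{B}(1,t)$); equivalently, using the estimate already displayed in the excerpt, $\rho_{\phi,S}(\alpha^{-1}f)=\int_S\int_G\phi\bigl(\alpha^{-1}(f(xs)-f(x))\bigr)\,dx\,ds\leq 1$ for $\alpha$ large enough. In particular the function $(x,s)\mapsto f(xs)-f(x)$ lies in $L^\phi(G\times S)$ (with the product measure), hence in $L^1_{loc}$, and by Fubini, for a.e.\ $x$ the slice $s\mapsto f(xs)-f(x)$ belongs to $L^\phi(S)$. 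Since $\phi$ is doubling, so is its conjugate $\psi$ (this is standard: doubling of $\phi$ is equivalent to $\psi$ satisfying the $\nabla_2$-condition, and a doubling $N$-function together with its conjugate being doubling is what makes $L^\phi,L^\psi$ well-behaved — but to stay self-contained I would instead invoke directly the pointwise bound of Lemma~\ref{LemmaConjugada}). Applying Lemma~\ref{LemmaConjugada} with the measure space $(S,\mathcal{H})$ and the function $g(s)=f(xs)-f(x)$, for a.e.\ $x$ we get $\phi'\bigl(f(x\cdot)-f(x)\bigr)\in L^\psi(S)\subset L^1(S)$, so $\Delta_\phi f(x)$ is a well-defined finite number for a.e.\ $x$.

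For local integrability, fix a compact set $K\subset G$ and estimate, using $|\phi'(u)|\le\psi(\phi'(u))+1$ (Young's inequality~\eqref{YoungInequality} with $s=1$, or simply convexity giving $|t|\le\psi(t)+\psi(1)$ applied to $t=\phi'(u)$, up to a constant), together with Fubini:
\begin{align*}
\int_K\bigl|\Delta_\phi f(x)\bigr|\,dx
&\le\int_K\int_S\bigl|\phi'\bigl(f(xs)-f(x)\bigr)\bigr|\,ds\,dx\\
&\le\int_K\int_S\Bigl(\psi\bigl(\phi'(f(xs)-f(x))\bigr)+\psi(1)\Bigr)\,ds\,dx\\
&\le (D-1)\int_S\int_K\phi\bigl(f(xs)-f(x)\bigr)\,dx\,ds+\psi(1)\,\mathcal{H}(K)\,\mathcal{H}(S),
\end{align*}
where in the last line I used the pointwise bound $\psi(\phi'(u))\le(D-1)\phi(u)$ from the proof of Lemma~\ref{LemmaConjugada}. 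The double integral $\int_S\int_K\phi(f(xs)-f(x))\,dx\,ds$ is finite: replacing $f$ by $\alpha^{-1}f$ for suitable $\alpha$ and using convexity of $\phi$ to absorb the constant, it is controlled by $\rho_{\phi,S}(\alpha^{-1}f)\le 1$, which is finite precisely because $f\in D_\phi(G)$ (and $K$ has finite Haar measure). Hence $\int_K|\Delta_\phi f|<+\infty$ for every compact $K$, i.e.\ $\Delta_\phi f\in L^1_{loc}(G)$.

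The only genuinely delicate point is the measurability and a.e.-finiteness of $x\mapsto\Delta_\phi f(x)$ before one knows integrability — but this is exactly what Fubini delivers once one knows $(x,s)\mapsto\phi'(f(xs)-f(x))$ is in $L^1_{loc}(G\times S)$, and that in turn follows from the $L^\phi(G\times S)$-membership of $(x,s)\mapsto f(xs)-f(x)$ combined with Lemma~\ref{LemmaConjugada}; the measurability of $(x,s)\mapsto f(xs)-f(x)$ is clear since $(x,s)\mapsto xs$ is continuous and $f$ is Borel, and $\phi'$ is Borel (monotone). So there is no real obstacle; the proof is essentially an application of Young's inequality and Fubini together with Lemma~\ref{LemmaConjugada}.
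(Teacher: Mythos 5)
Your proof is correct and takes essentially the same route as the paper: both arguments reduce to bounding $\int_K\int_S\bigl|\phi'\bigl(f(xs)-f(x)\bigr)\bigr|\,ds\,dx$ via Tonelli and the pointwise estimate $\psi\bigl(\phi'(t)\bigr)\le (D-1)\phi(t)$ from Lemma \ref{LemmaConjugada}, the only cosmetic difference being that you pass from the $\psi$-modular to the $L^1$-bound by Young's inequality (with $s=1$ this gives $|t|\le\psi(t)+\phi(1)$, so the additive constant is $\phi(1)$, not $\psi(1)$), whereas the paper uses Jensen's inequality and $\psi^{-1}$. Two side remarks should be corrected, although neither is load-bearing: doubling of $\phi$ does not imply doubling of $\psi$ (it is equivalent to $\psi$ satisfying the $\nabla_2$-condition, which is weaker), and the finiteness of $\int_S\int_K\phi\bigl(f(xs)-f(x)\bigr)\,dx\,ds$ is not obtained by convexity ``absorbing'' the scaling constant (convexity gives the inequality in the wrong direction); it follows from the standing doubling assumption on $\phi$ via Proposition \ref{PropDoblante}(i), which is exactly how the paper justifies $\rho_{\phi,S}(f)<+\infty$.
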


\begin{proof}
Consider $K\subset G$ a compact set with $\mu(K)>0$. Using Tonelli's theorem, Jensen's inequality, and Lemma \ref{LemmaConjugada}, we get
\begin{align*}
\int_{K}\int_S \Bigl|\phi'\bigl(f(xs)-f(x)\bigr)\Bigr|\,ds\,dx
&\leq \mu(K)\mu(S)\,\psi^{-1}\left(\frac{1}{\mu(K)\mu(S)} \int_{S} \rho_\psi\Bigl(\phi'\bigl(f(xs)-f(x)\bigr)\Bigr)\,ds\right)\\
&\leq \mu(K)\mu(S)\,\psi^{-1}\left(\frac{D-1}{\mu(K)\mu(S)} \int_{S} \rho_\phi\bigl(f(xs)-f(x)\bigr)\,ds\right)\\
&\leq \mu(K)\mu(S)\,\psi^{-1}\left(\frac{D-1}{\mu(K)\mu(S)} \rho_{\phi,S}(f)\right)<+\infty.
\end{align*}
Since $G$ is locally compact and $\mu$ is positive on open sets by \eqref{MedidaDoblante},
$$\int_S\Bigl|\phi'\bigl(f(xs)-f(x)\bigr)\Bigr|ds<+\infty$$
for almost every $x\in G$ and thus $\Delta_\phi f$ is defined almost everywhere. Furthermore, the previous estimate shows that $\Delta_\phi f\in L_{loc}^1(G)$.
\end{proof}

We want to prove the following result:

\begin{theorem}\label{TeoArmonicas}
Suppose that $\psi$ is also doubling. Then for every $[f]\in \mathcal{D}_\phi(G)$ there exists $[u]\in \overline{\mathcal{L}^\phi(G)}$ and a $\phi$-harmonic class $[h]\in \mathcal{D}_\phi(G)$ such that $[f]=[u]+[h]$.
\end{theorem}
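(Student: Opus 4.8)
The plan is to produce the $\phi$-harmonic representative as a minimizer of the energy functional $\rho_{\phi,S}$ over the relevant coset, and then to recognize the Euler--Lagrange equation of this variational problem as the equation $\Delta_\phi h=0$. Fix a representative $f\in D_\phi(G)$. Since $\rho_{\phi,S}(f)=\int_S\int_G\phi\bigl(f(xs)-f(x)\bigr)\,dx\,ds$ depends only on the differences $f(xs)-f(x)$, it is invariant under adding constants and hence descends to a convex, everywhere‑finite functional on $\mathcal{D}_\phi(G)=D_\phi(G)/\R$; because $\phi$ is doubling, an argument as in Proposition~\ref{PropDoblante} shows that this functional is moreover continuous on $\mathcal{D}_\phi(G)$, hence weakly lower semicontinuous. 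It suffices to find $h\in D_\phi(G)$ with $[h]\in\mathcal{C}:=[f]+\overline{\mathcal{L}^\phi(G)}$ and $\Delta_\phi h=0$ almost everywhere: then $[u]:=[f]-[h]\in\overline{\mathcal{L}^\phi(G)}$ yields the decomposition $[f]=[u]+[h]$.

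\emph{Existence of a minimizer.} I would minimize $\rho_{\phi,S}$ over the nonempty closed convex set $\mathcal{C}\subset\mathcal{D}_\phi(G)$ by the direct method. Since $\phi$ and $\psi$ are both doubling, $L^\phi$ of any $\sigma$‑finite measure space is reflexive; in particular $L^\phi(G^2_t)$ is reflexive, and by Remark~\ref{generalizacion} the space $\mathcal{D}_\phi(G)$ is isometric to $\bigl(Z^1_\phi(G),\|\ \|_{\phi,t}\bigr)$, which is a closed subspace of $L^\phi(G^2_t)$ (the cocycle condition~\eqref{condCociclo} passes to almost-everywhere limits). Hence $\mathcal{D}_\phi(G)$ is reflexive. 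The sublevel set $\{[g]\in\mathcal{C}:\rho_{\phi,S}([g])\le\rho_{\phi,S}([f])\}$ is convex and closed, and it is bounded because $\rho_{\phi,S}([g])\le M$ forces $\|[g]\|_{\phi,S}\le\max\{1,M\}$ (convexity of $\phi$, as in Remark~\ref{equivNormas}); by Mazur's lemma and reflexivity it is therefore weakly compact. As $\rho_{\phi,S}$ is weakly lower semicontinuous, it attains its infimum over $\mathcal{C}$ at some $[h]$; choose a representative $h\in D_\phi(G)$.

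\emph{Euler--Lagrange equation.} For $g\in C_0(G)$ the class $[g]$ lies in $\mathcal{L}^\phi(G)\subseteq\overline{\mathcal{L}^\phi(G)}$, so $[h+\epsilon g]\in\mathcal{C}$ for every $\epsilon\in\R$, and $\epsilon\mapsto\rho_{\phi,S}(h+\epsilon g)$ has a minimum at $\epsilon=0$. Differentiating under the integral sign — justified by dominated convergence, with the dominating function obtained from Young's inequality, Lemma~\ref{LemmaConjugada}, monotonicity and oddness of $\phi'$, and the doubling condition on $\phi$ (so that $\phi'\bigl(|h(xs)-h(x)|+|g(xs)-g(x)|\bigr)\,|g(xs)-g(x)|$ is bounded by an integrable function on $S\times G$) — the first variation vanishes:
\begin{equation*}
\int_S\int_G \phi'\bigl(h(xs)-h(x)\bigr)\bigl(g(xs)-g(x)\bigr)\,dx\,ds=0.
\end{equation*}
Splitting into the two terms coming from $g(xs)$ and $g(x)$, and using that $G$ is unimodular (so Haar measure is right-invariant and inversion-invariant) together with the symmetry of $S$, each term equals $-\int_G\Delta_\phi h(x)\,g(x)\,dx$; hence $\int_G\Delta_\phi h\cdot g=0$ for all $g\in C_0(G)$. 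Since $\Delta_\phi h\in L^1_{loc}(G)$, the fundamental lemma of the calculus of variations gives $\Delta_\phi h=0$ almost everywhere, i.e.\ $[h]$ is $\phi$-harmonic, which completes the proof.

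\emph{Main obstacle.} The delicate point is the existence step: one must verify that $\mathcal{D}_\phi(G)$ is reflexive — this is exactly where the hypothesis that both $\phi$ and $\psi$ are doubling enters — and that $\rho_{\phi,S}$ is coercive on cosets and weakly lower semicontinuous, so that the direct method applies. The differentiation under the integral sign and the symmetry/unimodularity identity identifying the first variation with $-2\int_G\Delta_\phi h\cdot g$ are routine but rely on the doubling hypotheses and the $N$-function structure set up earlier in the section. (Uniqueness of $[h]$, not asserted in the statement, would follow from the strict convexity of $\phi$, which makes the minimizer of $\rho_{\phi,S}$ over a convex set unique.)
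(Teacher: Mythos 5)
Your proposal is correct and follows essentially the same route as the paper: minimize the Orlicz--Dirichlet energy over the coset $[f]+\overline{\mathcal{L}^\phi(G)}$ using reflexivity (which is where doubling of both $\phi$ and $\psi$ enters, via an isometric realization of $\mathcal{D}_\phi(G)$ inside a reflexive Orlicz space), convexity/coercivity and lower semicontinuity, and then identify the vanishing first variation with $-2\int_G \Delta_\phi h\,g$ using unimodularity and the symmetry of $S$. The only (cosmetic) differences are that the paper phrases the minimization as $\mathcal{I}^f$ on the subspace $\overline{\mathcal{L}^\phi(G)}$, embeds $\mathcal{D}_\phi(G)$ into $L^\phi(S\times G)$ rather than into $L^\phi(G^2_t)$, and deduces $\Delta_\phi h=0$ by testing against normalized indicators $\delta_{x,\epsilon}$ and the Lebesgue differentiation theorem, whereas you test against $C_0(G)$ and invoke the fundamental lemma of the calculus of variations.
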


This theorem says that every class in $\overline{H}^1\bigl(G,L^\phi(G)\bigr)$ can be represented by a $\phi$-harmonic function (unique up to constants). This also gives a one-to-one correspondence between $\phi$-harmonic classes for two different generators. To prove it, we adapt the argument used in \cite{KP2} for discrete groups. It is also suggested for the $L^p$ case in \cite{T}.

The problem of finding $[u]$ as in Theorem \ref{TeoArmonicas} is, as in other contexts, equivalent to minimizing a kind of energy operator.
Given $f\in D_\phi(G)$, we define the operator
$$\mathcal{I}^f:\overline{\mathcal{L}^\phi(G)}\to [0,+\infty),\ \mathcal{I}^f\bigl([g]\bigr)=\rho_{\phi,S}(f-g).$$
Since $\phi$ is strictly convex, $\mathcal{I}^f$ is also strictly convex. Using Proposition~\ref{PropDoblante}, it is easy to see that $\mathcal{I}^f$ is continuous.

\begin{proposition}\label{eqArm}
The class $[u]\in \overline{\mathcal{L}_\phi(G)}$ minimizes $\mathcal{I}^f$ if and only if $[h]=[f]-[u]$ is $\phi$-harmonic.
\end{proposition}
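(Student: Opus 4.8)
The plan is to treat $\mathcal{I}^f$ as a convex functional on the closed subspace $\overline{\mathcal{L}^\phi(G)}$ and to characterize its minimizers by the vanishing of a directional (Gâteaux) derivative, which will turn out to be exactly the pairing of $\Delta_\phi(f-u)$ against elements of $L^\phi(G)$.

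First I would fix a representative $h$ of the class $[f]-[u]$ and, for $g\in L^\phi(G)$, consider the real-valued function $\Phi(t)=\mathcal{I}^f([u]+t[g])=\rho_{\phi,S}(h-tg)=\int_S\int_G\phi\bigl((h(xs)-h(x))-t(g(xs)-g(x))\bigr)\,dx\,ds$. Since $\phi$ is convex, $\Phi$ is convex; consequently $[u]$ minimizes $\mathcal{I}^f$ along the line $t\mapsto[u]+t[g]$ if and only if $\Phi'(0)=0$, and --- using convexity of $\Phi$ together with the continuity of $\mathcal{I}^f$ and the density of $\mathcal{L}^\phi(G)$ in $\overline{\mathcal{L}^\phi(G)}$ --- $[u]$ is a global minimizer of $\mathcal{I}^f$ if and only if $\Phi'(0)=0$ for every $g\in L^\phi(G)$.

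The computational heart is the evaluation of $\Phi'(0)$. Differentiating under the integral sign gives $\Phi'(0)=-\int_S\int_G\phi'(h(xs)-h(x))\,(g(xs)-g(x))\,dx\,ds$. I would split this into two terms; in the first, the substitution $y=xs$ is legitimate because $G$ is unimodular, and then the substitution $s\mapsto s^{-1}$ is legitimate because $S$ is symmetric and $G$ is unimodular; using that $\phi'$ is odd, the two terms coincide, and one obtains $\Phi'(0)=2\int_G\Delta_\phi h(x)\,g(x)\,dx$. Hence $[u]$ minimizes $\mathcal{I}^f$ if and only if $\int_G\Delta_\phi h\cdot g\,d\mu=0$ for all $g\in L^\phi(G)$; since $\Delta_\phi h\in L^1_{loc}(G)$ and $\mathbbm{1}_A\in L^\phi(G)$ for every set $A$ of finite measure, this is equivalent to $\Delta_\phi h=0$ almost everywhere, that is, to $[h]=[f]-[u]$ being $\phi$-harmonic.

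The delicate points, which I expect to be the main obstacle, are the integrability justifications. For differentiation under the integral I would dominate the difference quotients $t^{-1}\bigl(\phi(a-tb)-\phi(a)\bigr)$, $0<|t|\le1$, by $|b|\,\phi'(|a|+|b|)$ via monotonicity of $\phi'$, and then by $C\bigl(\phi(a)+\phi(b)\bigr)$ via the doubling property of $\phi$ (using $t\phi'(t)\le D\phi(t)$, as in the proof of Lemma \ref{LemmaConjugada}); the resulting dominating function on $S\times G$ is integrable because $\rho_{\phi,S}(h)=\mathcal{I}^f([u])<+\infty$ and $\rho_{\phi,S}(g)\le D\,\mu(S)\,\rho_\phi(g)<+\infty$ (here I use that $\phi$ is doubling, so $\rho_\phi(2g)\le D\rho_\phi(g)$, and that $G$ is unimodular). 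For the use of Fubini and for rewriting $\Phi'(0)$ as $2\int_G\Delta_\phi h\cdot g$ one needs $\int_S\|\phi'(h(\cdot\, s)-h(\cdot))\|_\psi\,ds<+\infty$, which follows from Lemma \ref{LemmaConjugada} together with the elementary bound $\|w\|_\psi\le1+\rho_\psi(w)$ and Hölder's inequality \eqref{Holder}. Apart from these verifications and the passage from the dense subspace $\mathcal{L}^\phi(G)$ to all of $\overline{\mathcal{L}^\phi(G)}$ by continuity of $\mathcal{I}^f$, the argument is a direct convexity computation.
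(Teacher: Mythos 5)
Your argument is correct and rests on the same core computation as the paper: the directional derivative of $\mathcal{I}^f$ at $[u]$ in a direction $[g]$, computed by dominated convergence, equals (up to sign) $2\int_G \Delta_\phi h\, g\,d\mu$ after the change of variables $y=xs$, $s\mapsto s^{-1}$ (using unimodularity, symmetry of $S$ and oddness of $\phi'$), and the integrability justifications you give (mean-value/doubling domination for the difference quotients, Lemma \ref{LemmaConjugada} plus H\"older \eqref{Holder} for the splitting) are sound. Where you genuinely deviate is in the two auxiliary steps. First, to pass from ``all directional derivatives vanish'' to ``global minimum'' the paper verifies full G\^ateaux differentiability (Lemma \ref{IesGd}, i.e.\ that the derivative lies in the dual) and invokes the strict-convexity inequality of Ekeland--Temam (Proposition \ref{PropET2}); you instead use only the elementary convexity inequality $\Phi(t)\geq\Phi(0)+t\Phi'(0)$ along lines in the dense affine subset $[u]+\mathcal{L}^\phi(G)$ together with continuity of $\mathcal{I}^f$, which is lighter and does not need strict convexity at this point (strict convexity is only needed later, for uniqueness of the minimizer in Theorem \ref{TeoArmonicas}). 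Second, to deduce $\Delta_\phi h=0$ a.e.\ from the vanishing of the pairing, the paper (in Lemma \ref{LemmaSiiArm}) tests against the normalized indicators $\delta_{x,\epsilon}$ and applies the Lebesgue differentiation theorem, which is where the locally doubling hypothesis \eqref{MedidaDoblante} on the Haar measure enters; your test functions $\mathbbm{1}_A$ for $A$ of finite measure, combined with $\Delta_\phi h\in L^1_{loc}(G)$ and $\sigma$-compactness of $G$, reach the same conclusion without that hypothesis. So your route is a mild but genuine simplification of both flanks of the paper's argument, at the cost of not isolating the reusable statements (Lemmas \ref{IesGd} and \ref{LemmaSiiArm}) that the paper formulates separately.
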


In order to prove Proposition~\ref{eqArm}, recall the definition of Gâteaux derivative of an operator and some properties involving it. 

The \textit{Gâteaux derivative} of a function $F:V\to\R$ (defined on a topological vector space $V$) at $u$ in the direction $v\in V$ is, if it exists,
$$F'(u;v)=\lim_{\lambda\to 0^+} \frac{F(u+\lambda v)-F(u)}{\lambda}.$$
We say that $F$ is \textit{Gâteaux differentiable} at $u$ if the limit exists for every $v\in V$ and the map $F_u=F'(u;\cdot)$ is in the dual space of $V$.

\begin{remark}\label{Torta}
Observe that if $F:V\to\R$ is a Gâteaux differentiable function that has a minimum  on a subspace $W$ at $u$, then $F_u|_W\equiv 0$. This is because for every $w\in W$,
$$0\leq F_u(-v)=-F_u(v)\leq 0.$$
\end{remark}

\begin{lemma}\label{IesGd}
The operator $\mathcal{I}^f$ is Gâteaux differentiable at every $[u]$ and $$\mathcal{I}^f_{[u]}\bigl([g]\bigr)=\int_S\int_G \phi'\bigl((f-u)(xs)-(f-u)(x)\bigr)\bigl(g(xs)-g(x)\bigr)\,dx\,ds.$$
\end{lemma}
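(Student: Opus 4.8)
The plan is to compute the Gâteaux derivative directly from the definition, differentiating under the integral sign and justifying this passage by a dominated-convergence argument. Write $F = f - u$ for brevity, and fix $[g]\in\overline{\mathcal{L}^\phi(G)}$. For $\lambda>0$ small, form the difference quotient
$$
\frac{\mathcal{I}^f([u+\lambda g])-\mathcal{I}^f([u])}{\lambda}=\int_S\int_G \frac{\phi\bigl(F(xs)-F(x)-\lambda(g(xs)-g(x))\bigr)-\phi\bigl(F(xs)-F(x)\bigr)}{\lambda}\,dx\,ds.
$$
Pointwise, since $\phi$ is differentiable away from $0$ and we have set $\phi'(0)=0$ (and convexity gives differentiability of $t\mapsto\phi(a+tb)$ except at the single point where $a+tb=0$, a null set in $\lambda$), the integrand converges as $\lambda\to 0^+$ to $-\phi'\bigl(F(xs)-F(x)\bigr)\bigl(g(xs)-g(x)\bigr)$, which is exactly (minus) the claimed expression. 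So the first step is this pointwise identification.

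The second and main step is to produce an integrable dominating function so that the Dominated Convergence Theorem applies on $S\times G$. By convexity of $\phi$, for $0<\lambda\le 1$ the difference quotient is monotone in $\lambda$, and one gets the bound
$$
\left|\frac{\phi\bigl(a-\lambda b\bigr)-\phi(a)}{\lambda}\right|\le \bigl|\phi(a-b)-\phi(a)\bigr|\le \tfrac12\phi(2a)+\tfrac12\phi(2b)+\tfrac12\phi(2a)\le \phi(2a)+\phi(2|b|)
$$
(using convexity and $\phi$ even), with $a=F(xs)-F(x)$ and $b=g(xs)-g(x)$. Hence the integrand is dominated by $\phi\bigl(2(F(xs)-F(x))\bigr)+\phi\bigl(2(g(xs)-g(x))\bigr)$. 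Now $\int_S\int_G \phi\bigl(2(F(xs)-F(x))\bigr)\,dx\,ds=\rho_{\phi,S}(2F)<+\infty$ because $F=f-u\in D_\phi(G)$ and $\phi$ is doubling; similarly $\rho_{\phi,S}(2g)<+\infty$ since $g\in L^\phi(G)\subset D_\phi(G)$ and $\phi$ is doubling. Thus the dominating function is integrable on $S\times G$, and the DCT yields
$$
\mathcal{I}^f_{[u]}([g])=\int_S\int_G \phi'\bigl((f-u)(xs)-(f-u)(x)\bigr)\bigl(g(xs)-g(x)\bigr)\,dx\,ds.
$$

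The third step is to check that $[g]\mapsto \mathcal{I}^f_{[u]}([g])$ is linear and continuous on $\overline{\mathcal{L}^\phi(G)}$, so that $\mathcal{I}^f$ is genuinely Gâteaux differentiable. Linearity is immediate from the formula. For continuity, by Lemma~\ref{LemmaConjugada} we have $\phi'\bigl((f-u)(\cdot s)-(f-u)(\cdot)\bigr)\in L^\psi(G)$ for each $s$, with $\rho_\psi$-norm controlled by $\rho_\phi$; applying Hölder's inequality \eqref{Holder} in the $x$-variable and then integrating in $s\in S$ (a set of finite measure) gives
$$
\bigl|\mathcal{I}^f_{[u]}([g])\bigr|\le 2\int_S \bigl\|\phi'\bigl((f-u)(xs)-(f-u)(x)\bigr)\bigr\|_{L^\psi_x}\,\bigl\|g(xs)-g(x)\bigr\|_{L^\phi_x}\,ds\le C\,\|g\|_\phi,
$$
using $\|g(\cdot s)-g(\cdot)\|_\phi\preceq \|g\|_\phi$ uniformly for $s\in S$ (from \eqref{estimacion}) and the uniform $L^\psi$-bound on the $\phi'$-term, which follows from $\rho_{\phi,S}(f-u)<+\infty$ together with the doubling of $\phi$. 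Hence $\mathcal{I}^f_{[u]}$ belongs to the dual of $\overline{\mathcal{L}^\phi(G)}$.

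The main obstacle is the domination step: one must be careful that the bound on the difference quotient is genuinely $\lambda$-independent (which is where convexity and the monotonicity of difference quotients are essential) and that, after the bound $\phi(a-b)-\phi(a)$ is expanded, the doubling hypothesis is invoked correctly so that both $\rho_{\phi,S}(2(f-u))$ and $\rho_{\phi,S}(2g)$ are finite — it is precisely the doubling of $\phi$ that converts the finiteness of $\rho_{\phi,S}(f-u)$ and membership $g\in L^\phi(G)$ into finiteness of these doubled modulars. Everything else is a routine application of Hölder's inequality and the conjugacy estimate of Lemma~\ref{LemmaConjugada}.
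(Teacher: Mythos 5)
Your overall strategy — difference quotient, convexity-based domination, Dominated Convergence, then H\"older together with Lemma~\ref{LemmaConjugada} — is the same as the paper's, and the sign discrepancy you flag is harmless: perturbing $u$ by $+\lambda g$ perturbs $f-u$ by $-\lambda g$, so your limit is the negative of the displayed formula; the paper's own computation has the same direction ambiguity, and only the vanishing of $\mathcal{I}^f_{[u]}$ is used later, so this amounts to replacing $g$ by $-g$. However, two steps need repair. First, the inequality $\bigl|\frac{\phi(a-\lambda b)-\phi(a)}{\lambda}\bigr|\leq\bigl|\phi(a-b)-\phi(a)\bigr|$ is false: monotonicity of difference quotients of the convex function $\phi$ gives only the one-sided bound $\frac{\phi(a-\lambda b)-\phi(a)}{\lambda}\leq\phi(a-b)-\phi(a)$ for $0<\lambda\leq 1$. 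For instance $\phi(t)=t^2$, $a=1$, $b=2$ gives a quotient close to $-4$ while $\bigl|\phi(a-b)-\phi(a)\bigr|=0$. You need the two-sided estimate $\phi(a)-\phi(a+b)\leq\frac{\phi(a-\lambda b)-\phi(a)}{\lambda}\leq\phi(a-b)-\phi(a)$, so the dominating function must involve $\phi(a+b)$, $\phi(a-b)$ and $\phi(a)$; it is still integrable by convexity and doubling, so the DCT step survives, but as written the domination fails. (Also, for a general direction $[g]\in\overline{\mathcal{L}^\phi(G)}$ you should only use $g\in D_\phi(G)$, not $g\in L^\phi(G)$: elements of the closure need not be represented by $L^\phi$-functions.)

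The more serious gap is the continuity step. Gâteaux differentiability requires $\mathcal{I}^f_{[u]}$ to belong to the dual of $\overline{\mathcal{L}^\phi(G)}$, which is a closed subspace of the Banach space $\mathcal{D}_\phi(G)$ normed by $\|\cdot\|_{\phi,S}$. Your slice-wise H\"older argument produces a bound $\bigl|\mathcal{I}^f_{[u]}([g])\bigr|\leq C\|g\|_\phi$, i.e.\ continuity with respect to the $L^\phi(G)$-norm, which is the wrong topology: there is no inequality $\|g\|_\phi\preceq\|g\|_{\phi,S}$, the quantity $\|g\|_\phi$ is not even defined for a general element of the closure, and continuity with respect to $\|\cdot\|_{\phi,S}$ is exactly what Lemma~\ref{LemmaSiiArm} and the minimization argument later rely on. In addition, the claimed ``uniform $L^\psi$-bound in $s$'' does not follow from $\rho_{\phi,S}(f-u)<+\infty$; one only gets an $s$-integrated bound. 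The fix is the paper's route: apply H\"older's inequality \eqref{Holder} directly on the product space $S\times G$, noting that $\|g\|_{\phi,S}$ is precisely the $L^\phi(S\times G)$-norm of $(s,x)\mapsto g(xs)-g(x)$, and use Lemma~\ref{LemmaConjugada} on $S\times G$ to control $\bigl\|\phi'\bigl(\pi(\cdot)(f-u)-(f-u)\bigr)\bigr\|_{L^\psi(S\times G)}$ by means of $\|f-u\|_{\phi,S}$. With these two corrections your argument coincides with the paper's proof.
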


\begin{proof}
Since $\phi$ is convex, for $a,b\in\R$ and $\lambda>0$ we have
\begin{align*}
\frac{\phi(a+\lambda b)-\phi(a)}{\lambda} &= \frac{\phi\bigl((1-\lambda)a +\lambda (a+b)\bigr)-\phi(a)}{\lambda}\\
&\leq \phi(a+b)-\phi(a)\leq \phi(a+b).
\end{align*}
If we put $a=(f-u)(xs)-(f-u)(x)$ and $b= g(xs)- g(x)$, we have that the function 
$$(s,x)\mapsto \frac{\phi\bigl((f-u+\lambda g)(xs)-(f-u+\lambda g)(x)\bigr)-\phi\bigl((f-u)(xs)-(f-u)(x)\bigr)}{\lambda}$$
is dominated by a function in $L^1(S\times G)$. Observe that if $(f-u)(xs)-(f-u)(x)\neq 0$, then the previous quotient goes to $\phi'\bigl((f-u)(xs)-(f-u)(x)\bigr)\bigl(g(xs)-g(x)\bigr)$ when $\lambda\to 0^+$. If $(f-u)(xs)-(f-u)(x)=0$, then the quotient is equal to
$$\frac{\phi\Bigl(\lambda\bigl(g(xs)-g(x)\bigr)\Bigr)}{\lambda},$$
which converges to $\phi'(0)=0$ because $\phi$ is an $N$-function. By the Dominated Convergence Theorem, we obtain
$$(\mathcal{I}^f)'\bigl([u];[g]\bigr)=\int_S\int_G \phi'\bigl((f-u)(xs)-(f-u)(x)\bigr)\bigl(g(xs)-g(x)\bigr)\,dx\,ds.$$

Applying H\"older's inequality \eqref{Holder}, we get
\begin{align*}
\Bigl|(\mathcal{I}^f)'\bigl([u];[g]\bigr)- &(\mathcal{I}^f)'\bigl([u];[\tilde{g}]\bigr)\Bigr|\\
&\leq \int_S\int_G \Bigl|\phi'\bigl((f-u)(xs)-(f-u)(x)\bigr)\Bigr|\bigl|(g-\tilde{g})(xs)-(g-\tilde{g})(x)\bigr|\,dx\,ds\\
&\leq 2 \bigl\|\phi'\bigl(\pi(\cdot)(f-u)-(f-u)\bigr)\bigr\|_{L^\psi(S\times G)}\|g-\tilde{g}\|_{\phi,S}
\end{align*}
By Lemma \ref{LemmaConjugada} (applied to the space $S\times G$), we have 
$$\bigl\|\phi'\bigl(\pi(\cdot)(f-u)-(f-u)\bigl)\bigr\|_{L^\psi(S\times G)}\preceq  \|f-u\|_{\phi,S}<+\infty,$$
from which we deduce that $(\mathcal{I}^f)'\bigl([u];\cdot\bigr)$ is a element of the dual space of $\overline{\mathcal{L}^\phi(G)}$, which finishes the proof.
\end{proof}

\begin{lemma}\label{LemmaSiiArm}
The class $[h]=[f]-[u]$ is $\phi$-harmonic if and only if $\mathcal{I}_{[u]}^f\equiv 0$.
\end{lemma}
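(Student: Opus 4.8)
The plan is to reduce the equivalence to a single identity that expresses the Gâteaux derivative $\mathcal{I}^f_{[u]}$ in terms of the $\phi$-Laplacian of $h:=f-u$. Note first that $h\in D_\phi(G)$: indeed $[f]\in\mathcal{D}_\phi(G)$ and $\overline{\mathcal{L}^\phi(G)}\subseteq\mathcal{D}_\phi(G)$, so $\|h\|_{\phi,S}<+\infty$ and the formula of Lemma \ref{IesGd} applies to $\mathcal{I}^f_{[u]}$. Since $\mathcal{L}^\phi(G)$ is dense in $\overline{\mathcal{L}^\phi(G)}$ and $\mathcal{I}^f_{[u]}$ is continuous by Lemma \ref{IesGd}, the condition $\mathcal{I}^f_{[u]}\equiv 0$ is equivalent to $\mathcal{I}^f_{[u]}\bigl([g]\bigr)=0$ for every $g\in L^\phi(G)$, so it suffices to work with such test functions.

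The heart of the argument is to prove that for every $g\in L^\phi(G)$,
$$\mathcal{I}^f_{[u]}\bigl([g]\bigr)=-2\int_G g(x)\,\Delta_\phi h(x)\,dx.$$
First I would check, exactly as in the proof of Lemma \ref{IesGd} (Hölder's inequality \eqref{Holder} together with Lemma \ref{LemmaConjugada}, plus $\|\pi(s)g\|_\phi=\|g\|_\phi$ by unimodularity), that the double integral defining $\mathcal{I}^f_{[u]}\bigl([g]\bigr)$ is absolutely convergent; consequently it splits as $A-B$ with $A=\int_S\int_G\phi'\bigl(h(xs)-h(x)\bigr)g(xs)\,dx\,ds$ and $B=\int_S\int_G\phi'\bigl(h(xs)-h(x)\bigr)g(x)\,dx\,ds$, each absolutely convergent. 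By Fubini, $B=\int_G g(x)\Delta_\phi h(x)\,dx$. In $A$, the substitution $y=xs$ preserves the Haar measure because $G$ is unimodular, turning $A$ into $\int_S\int_G\phi'\bigl(h(y)-h(ys^{-1})\bigr)g(y)\,dy\,ds$; then replacing $s$ by $s^{-1}$ — legitimate since $S=S^{-1}$ and the Haar measure of a unimodular group is inversion-invariant — together with the fact that $\phi'$ is odd yields $A=-B$. Hence $\mathcal{I}^f_{[u]}\bigl([g]\bigr)=A-B=-2B$, which is the claimed identity.

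With this identity at hand both implications follow at once. If $[h]$ is $\phi$-harmonic, i.e. $\Delta_\phi h=0$ a.e., then the right-hand side vanishes for every $g\in L^\phi(G)$, so $\mathcal{I}^f_{[u]}\equiv 0$. Conversely, if $\mathcal{I}^f_{[u]}\equiv 0$ then $\int_G g\,\Delta_\phi h\,d\mu=0$ for all $g\in L^\phi(G)$; for any compact $K\subseteq G$ the function $g=\operatorname{sgn}(\Delta_\phi h)\,\mathbbm{1}_K$ is bounded with support of finite measure, hence lies in $L^\phi(G)$, and gives $\int_K|\Delta_\phi h|\,d\mu=0$. Since $\Delta_\phi h\in L^1_{loc}(G)$ and $G$ is $\sigma$-compact, $\Delta_\phi h=0$ a.e., i.e. $[h]$ is $\phi$-harmonic.

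The only genuinely delicate point is the bookkeeping in the middle paragraph: ensuring that $A$ and $B$ are \emph{separately} absolutely convergent, so that Fubini and the two changes of variables are justified. This rests entirely on $h\in D_\phi(G)$ and on the integrability estimate already used for Lemma \ref{IesGd}; the remaining manipulations are routine applications of unimodularity, the symmetry of $S$, and the oddness of $\phi'$.
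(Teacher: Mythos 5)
Your proof is correct, and its first half coincides with the paper's. The reduction by density and continuity to test classes $[g]$ with $g\in L^\phi(G)$, and the key identity $\mathcal{I}^f_{[u]}\bigl([g]\bigr)=-2\int_G g\,\Delta_\phi h\,d\mu$ --- justified by absolute convergence of the two pieces (Lemma \ref{LemmaConjugada} plus unimodularity; you use H\"older where the paper uses Young's inequality pointwise, which is immaterial) and then the substitutions $y=xs$, $s\mapsto s^{-1}$ together with $S=S^{-1}$ and the oddness of $\phi'$ --- is exactly the computation the paper performs, except that the paper leaves the change-of-variables step implicit while you spell it out. Where you genuinely diverge is the converse implication. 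The paper tests $\mathcal{I}^f_{[u]}$ against the normalized ball indicators $\delta_{x,\epsilon}=\mu\bigl(B(x,\epsilon)\bigr)^{-1}\mathbbm{1}_{B(x,\epsilon)}$ and lets $\epsilon\to 0$, invoking the Lebesgue differentiation theorem \cite[Theorem 1.8]{Hei} (this is one place where the locally doubling hypothesis \eqref{MedidaDoblante} is relevant). You instead test against $g=\operatorname{sgn}(\Delta_\phi h)\,\mathbbm{1}_K$ for compact $K$, which indeed lies in $L^\phi(G)$ since it is bounded with support of finite Haar measure, obtaining $\int_K\abs{\Delta_\phi h}\,d\mu=0$ and then exhausting the $\sigma$-compact group $G$. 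Your route is more elementary: it needs only $\Delta_\phi h\in L^1_{loc}(G)$ and $\sigma$-compactness, and it bypasses both the differentiation theorem and its doubling hypothesis at this step (that hypothesis is still used elsewhere in the section, e.g.\ to ensure $\mu$ is positive on open sets in the proof that $\Delta_\phi f$ is well defined). The paper's mollified-delta argument buys nothing additional here; the two converses prove the same statement under the section's standing assumptions.
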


\begin{proof}
($\Rightarrow$) Since $\mathcal{I}^f_{[u]}$ is continuous, it is enough to prove that $\mathcal{I}^f_{[u]}\bigl([g]\bigr)=0$ for every $g\in L^\phi(G)$.

By Young's inequality \eqref{YoungInequality} and Lemma \ref{LemmaConjugada} applied to the space $S\times G$, we have
\begin{align*}
\int_S\int_G \Bigl|\phi'\bigl(h(xs)-h(x)\bigr)\Bigr|\bigl|g(xs)\bigr|\,dx\,ds &\leq \int_S\int_G \psi\Bigl(\phi'\bigl(h(xs)-h(x)\bigr)\Bigr)\, dx\,ds + \int_S\int_G \phi\bigl(g(xs)\bigl) \,dx\,ds\\
& \leq (D-1)\rho_{\phi,S}(h)+\mu(S)\rho_\phi(g)<+\infty,
\end{align*}
where $D$ is a doubling constant for $\phi$. In the last inequality, we use that $G$ is unimodular. In the same way, we get 
$$\int_S\int_G \Bigl|\phi'\bigl(h(xs)-h(x)\bigr)\Bigr|\bigl|g(x)\bigr|\,dx\,ds<+\infty.$$
This allows to decompose $\mathcal{I}^f_{[u]}\bigl([g]\bigr)$ as follows:
\begin{align*}
\mathcal{I}^f_{[u]}\bigl([g]\bigr) &=\int_S\int_G\phi'\bigl(h(xs)-h(x)\bigr)g(xs)\,dx\,ds-\int_S\int_G\phi'\bigl(h(xs)-h(x)\bigr)g(x)\,dx\,ds\\
& =-2\int_G \Delta_\phi h(x)g(x)=0.
\end{align*}

($\Leftarrow$) For $x\in G$ and $\epsilon>0$ we define
$$\delta_{x,\epsilon}=\frac{1}{\mu\bigl(B(x,\epsilon)\bigr)}\mathbbm{1}_{B(x,\epsilon)}\in L^\phi(G).$$
As before,
$$0=\mathcal{I}^f_{[u]}(\delta_{x,\epsilon})=\frac{-2}{\mu(B(x,\epsilon))}\int_{B(x,\epsilon)} \Delta_\phi h(x)dx.$$
Applying the Differentiation Lebesgue Theorem (see \cite[Theorem 1.8]{Hei}\footnote{In \cite{Hei}, the theorem is proven for non-negative functions and doubling measure, but it can be easily generalized to our case.}) we conclude that, for almost every $x\in G$,
$$0=\lim_{\epsilon\to 0}\mathcal{I}^f_{[u]}(\delta_{x,\epsilon})=\Delta_\phi h(x);$$
thus, $h$ is $\phi$-harmonic.
\end{proof}

The last ingredient we need for proving Proposition \ref{eqArm} is the following result, which can be found in \cite[p. 24; Proposition 5.4.]{ET}.

\begin{proposition}\label{PropET2}
Let $F$ a Gâteaux differentiable function defined on a convex set $\mathcal{C}$. Then $F$ is strictly convex on $\mathcal{C}$ if and only if for every $u,v\in\mathcal{C}$ with $u\neq v$,
$$F(v)>F(u)+F'_u(v-u).$$
\end{proposition}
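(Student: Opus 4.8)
The plan is to reduce the statement to a claim about convex functions of one real variable by restricting $F$ to line segments. Given $u,v\in\mathcal{C}$, convexity of $\mathcal{C}$ guarantees $u+\tau(v-u)\in\mathcal{C}$ for $\tau\in[0,1]$, so one sets $g(\tau)=F\bigl(u+\tau(v-u)\bigr)$; this $g$ is convex on $[0,1]$ with $g(0)=F(u)$, $g(1)=F(v)$, and, straight from the definition of the Gâteaux derivative, $g'(0^+)=F'_u(v-u)$. Since strict convexity of $F$ on $\mathcal{C}$ means exactly that every such $g$ (for every pair $u\neq v$) lies strictly below its chord, i.e. $g(\tau)<(1-\tau)g(0)+\tau g(1)$ for all $\tau\in(0,1)$, the proposition becomes a one--variable statement.

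First I would prove ``strictly convex $\Rightarrow$ strict subgradient inequality''. Fix $u\neq v$ and the associated $g$. Strict convexity gives $\frac{g(\tau)-g(0)}{\tau}<g(1)-g(0)$ for every $\tau\in(0,1)$. Because $g$ is convex, the difference quotient $\tau\mapsto\frac{g(\tau)-g(0)}{\tau}$ is non-decreasing on $(0,1]$, so $F'_u(v-u)=g'(0^+)=\inf_{\tau\in(0,1)}\frac{g(\tau)-g(0)}{\tau}$, and this infimum is $<g(1)-g(0)=F(v)-F(u)$ since it is bounded above by any single one of the (strictly smaller) quotients. That is exactly $F(v)>F(u)+F'_u(v-u)$.

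For the converse, assume $F(v)>F(u)+F'_u(v-u)$ whenever $u\neq v$ in $\mathcal{C}$. Fix $u\neq v$ and $\tau\in(0,1)$, and set $w=(1-\tau)u+\tau v\in\mathcal{C}$, so $w\neq u$ and $w\neq v$. Applying the hypothesis to the pairs $(w,u)$ and $(w,v)$ gives $F(u)>F(w)+F'_w(u-w)$ and $F(v)>F(w)+F'_w(v-w)$. Multiply the first inequality by $1-\tau>0$, the second by $\tau>0$, and add; since $F'_w$ is linear -- this is part of being Gâteaux differentiable -- and $(1-\tau)(u-w)+\tau(v-w)=\bigl((1-\tau)u+\tau v\bigr)-w=0$, the $F'_w$--terms cancel, leaving $(1-\tau)F(u)+\tau F(v)>F(w)=F\bigl((1-\tau)u+\tau v\bigr)$. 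As $u\neq v$ and $\tau$ were arbitrary, $F$ is strictly convex on $\mathcal{C}$.

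This is a standard convex-analysis argument, so I do not expect a genuine obstacle; the work is purely bookkeeping. The two points needing a little care are the monotonicity of the difference quotients of a convex function (used in the first implication) and the linearity of the Gâteaux differential (used in the second); the latter is built into the notion of Gâteaux differentiability recalled just before Remark~\ref{Torta}, and the former is elementary. In the only setting where we use this statement, $\mathcal{C}=\overline{\mathcal{L}^\phi(G)}$ is a linear subspace, so all the directional derivatives appearing above are unambiguously defined.
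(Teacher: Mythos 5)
Your proof is correct. Note that the paper does not prove this proposition at all: it simply cites Ekeland--Temam \cite[p.~24, Proposition~5.4]{ET}, so there is no internal argument to compare against; what you have written is a complete, self-contained replacement for that citation, and it is the standard convex-analysis argument. Both directions check out: for ``strictly convex $\Rightarrow$ strict inequality'' you correctly reduce to the one-variable function $g(\tau)=F\bigl(u+\tau(v-u)\bigr)$, use that the paper's Gâteaux derivative is defined as a one-sided limit (so $g'(0^+)=F'_u(v-u)$ exists), and exploit the monotonicity of the difference quotients of the convex function $g$ together with the strict chord inequality at a single $\tau_0\in(0,1)$ to get the strict bound; for the converse, the barycentric trick of applying the hypothesis at $w=(1-\tau)u+\tau v$ in the two directions $u-w$ and $v-w$ and cancelling the derivative terms via linearity of $F'_w$ (which, as you observe, is part of the paper's definition of Gâteaux differentiability) is exactly right. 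Your closing remark that in the paper's only application $\mathcal{C}=\overline{\mathcal{L}^\phi(G)}$ is a linear subspace, so the relevant directional derivatives are unproblematic, appropriately addresses the only slight looseness in the statement (a Gâteaux differentiable function ``on a convex set'').
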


\begin{proof}[Proof of Proposition \ref{eqArm}]
If $\mathcal{I}^f$ has a minimum at $[u]$, then $\mathcal{I}_{[u]}^f\equiv 0$ by Remark \ref{Torta}. Using Lemma \ref{LemmaSiiArm}, we conclude that $[h]$ is $\phi$-harmonic.

Conversely, if $[h]$ is $\phi$-harmonic, then $\mathcal{I}^f_{[u]}\equiv 0$ (again by Lemma \ref{LemmaSiiArm}). Applying Proposition \ref{PropET2} to $F=\mathcal{I}^f$, we see that this operator has a minimum at $[u]$. 
\end{proof} 

In order to prove Theorem \ref{TeoArmonicas}, we use the following proposition, which is a particular case of   \cite[p. 35; Proposition 1.2]{ET}.

\begin{proposition}\label{PropET}
Let $V$ be a reflexive Banach space and $F:V\to \R$ a convex lower semicontinuous operator such that $F(u)\to +\infty$ if $\|u\|\to +\infty$.
Then $F$ has a minimum. If $F$ is in addition strictly convex, then the minimum is unique. 
\end{proposition}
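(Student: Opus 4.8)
The plan is to run the direct method of the calculus of variations. Set $m=\inf_{u\in V}F(u)$. First I would check that $m>-\infty$: since $F$ is convex and lower semicontinuous, a Hahn--Banach separation applied to its (nonempty, convex, closed) epigraph yields $\ell\in V^*$ and $c\in\R$ with $F(u)\ge\scal{\ell,u}+c$ for every $u\in V$, and together with the coercivity hypothesis this forces $m$ to be finite; alternatively, the boundedness of the minimizing sequence obtained below already shows $m>-\infty$. Fix a minimizing sequence $(u_n)_{n\in\N}$, so that $F(u_n)\to m$.

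Next I would show that $(u_n)$ is bounded in $V$. If it were not, some subsequence would satisfy $\norm{u_{n_k}}\to+\infty$, and then the hypothesis that $F(u)\to+\infty$ as $\norm{u}\to+\infty$ would give $F(u_{n_k})\to+\infty$, contradicting $F(u_n)\to m<+\infty$. Hence $(u_n)$ is bounded.

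Now the reflexivity of $V$ enters: a bounded sequence in a reflexive Banach space admits a weakly convergent subsequence (Banach--Alaoglu together with the Eberlein--\v Smulian theorem), so after passing to a subsequence we may assume $u_n\rightharpoonup u_*$ for some $u_*\in V$. The essential point -- and the part that really uses convexity -- is that $F$ is weakly sequentially lower semicontinuous: its epigraph is convex and strongly closed, hence weakly closed by Mazur's lemma, and this is precisely weak lower semicontinuity of $F$. Consequently $F(u_*)\le\liminf_n F(u_n)=m$, and since $m$ is the infimum we get $F(u_*)=m$, that is, $u_*$ is a minimizer.

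For the uniqueness statement, assume in addition that $F$ is strictly convex and suppose $u_1\neq u_2$ both attain the value $m$. Then strict convexity yields $F\bigl(\tfrac12 u_1+\tfrac12 u_2\bigr)<\tfrac12 F(u_1)+\tfrac12 F(u_2)=m$, contradicting the minimality of $m$; hence the minimizer is unique. The only non-routine ingredients are the passage from strong to weak lower semicontinuity via Mazur's lemma (equivalently, the weak closedness of strongly closed convex sets) and the use of reflexivity to extract the weak limit; everything else is elementary, and I expect no serious obstacle beyond invoking these two standard facts correctly.
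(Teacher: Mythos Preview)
Your argument is the standard direct-method proof and is correct. Note, however, that the paper does not give its own proof of this proposition: it simply records it as a particular case of \cite[p.~35, Proposition~1.2]{ET} (Ekeland--Temam) and uses it as a black box. Your write-up is essentially the classical proof one finds in that reference, so there is nothing to compare; you have supplied what the paper outsources.
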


\begin{proof}[Proof of Theorem \ref{TeoArmonicas}]
By Proposition \ref{eqArm}, we have to prove that $\mathcal{I}^f$ has a unique minimum. For this end we will apply Proposition \ref{PropET}.

Observe that there is a natural isometric embedding $$\mathcal{D}_\phi(G)\to L^\phi(S\times G),\ [f]\mapsto \pi(\cdot)f-f.$$
Since $\phi$ is doubling and has doubling conjugate, $L^\phi(S\times G)$ is reflexive (see \cite[p. 111, Corollary 9]{RR}). Thus $\mathcal{D}_\phi(G)$ is also reflexive because it is isometric to a closed subspace of a reflexive space. 

We already know that $\mathcal{I}^f$ is strictly convex. Furthermore, it is continuous and hence it is lower semicontinuous. Let us prove that $\mathcal{I}^f\bigl([g]\bigr)\to +\infty$ if $\|g\|_{\phi,S}\to +\infty$. 

If $\|g_n\|_{\phi,S}\to +\infty$, then, assuming that $\|f-g_n\|_{\phi,S}\geq 1$, we have
$$1=\rho_{\phi,S}\left(\frac{f-g_n}{\|f-g_n\|_{\phi,S}}\right)\leq \frac{\rho_{\phi,S}(f-g_n)}{\|f-g_n\|_{\phi,S}}= \frac{\mathcal{I}^f\bigl([g_n]\bigr)}{\|f-g_n\|_{\phi,S}},$$
and as a consequence $\mathcal{I}^f\bigl([g_n]\bigr)\geq \|f-g_n\|_{\phi,S}\to+\infty$.

Putting all together, we conclude that $\mathcal{I}^f$ has a unique minimum $[u]$, from which we obtain the desired decomposition. 
\end{proof}

\begin{remark}
Following Remark \ref{generalizacion}, we can give a definition of $\phi$-Laplacian for more general metric spaces:   
$$\Delta_{\phi,t}:D_\phi(X)\to L_{loc}^1(X),\ \Delta_{\phi,t}f(x)=\int_{X^2_t}\phi'\bigl(f(y)-f(x)\bigr)\,dy\,dx.$$
This notion is similar to the one defined in \cite{T}. All done above works in this more general context if the measure on $X$ is locally doubling. 
\end{remark}

\subsection{Examples}
{\bf (1)}
We study the case $G=\R$ with the usual addition, measure, and metric. Here $S=[-1,1]$ and $\phi$ is as in Theorem \ref{TeoArmonicas}.

On the one hand, it is easy to see that $H^1\bigl(\R,L^\phi(\R)\bigr)\neq 0$. Indeed, if $f:\R\to\R$ is a continuous increasing function such that $f(x)=0$ for every $x\leq 0$ and $f(x)=1$ for every $x\geq 1$, then it is clear that $f\in D_\phi(\R)$ because the function $x\mapsto f(x+s)-f(x)$ has image included in $[-1,1]$ and support in $[-1,2]$. It is also easy to see that $f\notin L^\phi(\R)+\R$, which implies that $f$ represents a non-zero class in $H^1\bigl(\R,L^\phi(\R)\bigr)$ via identification \eqref{Identificacion} and Theorem \ref{MainEquivalence}.

On the other hand, since $\R$ and $\Z$ are quasi-isometric, their reduced asymptotic $L^\phi$-cohomologies are isomorphic, and therefore, they have isomorphic reduced continuous $L^\phi$-cohomologies. Let us prove that $\Z$ has no $\phi$-harmonic classes, which implies $\overline{H}^1\bigl(\Z,\ell^\phi(\Z)\bigr)=\overline{H}^1\bigl(\R,L^\phi(\R)\bigr)=0$. In particular, $\R$ has no non-trivial $\phi$-harmonic classes. 

The argument below can also be found in \cite{Pa}. 

Consider in $\Z$ the generator $S=\{-1,0,1\}$. If $f\in D_\phi(\Z)$ is $\phi$-harmonic, then for every $n\in\Z$,
$$0=\Delta_\phi f(n)=\phi'\bigl(f(n+1)-f(n)\bigr)+\phi'\bigl(f(n-1)-f(n)\bigr).$$
Since $\phi'$ is odd and increasing, from the previous equality we have that $n\mapsto f(n+1)-f(n)$ is constant. Which implies that $f$ is constant because $f$ is a $\phi$-Dirichlet function. We conclude that the only $\phi$-harmonic class on $\Z$ is the trivial one.

\medskip

{\bf (2)}
Let us say something about the $L^\phi$-cohomology of the real hyperbolic space $\H^n$ for some fixed doubling Young function $\phi$. It can be seen as the Heintze group $\H^n=\R^{n-1}\rtimes_{Id}\R$ (see \cite{Heintze}).

We first observe that if $\Gamma\leq \mathrm{Isom}(\H^n)$ is a discrete group such that $M=\H^n/\Gamma$ is a closed hyperbolic manifold, then $\Gamma$ acts freely, properly discontinuously, and cocompactly on $\H^n$. Moreover, a simplicial structure can be defined by lifting a triangulation of $M$ to $\H^n$. According to this structure, $\Gamma$ acts also by simplicial automorphisms; hence, Proposition \ref{equivSG} implies that $H^k\bigl(\Gamma,\ell^\phi(\Gamma)\bigr)$ and $\ell^\phi H^k(\H^n)$ are isomorphic for every $k\in\N$. The same is true for the reduced cohomology.

If we equip $\Gamma$ with the word metric and the counting measure, it satisfies the hypothesis of Theorem \ref{MainEquivalence}.  The groups $\Gamma$ and $\H^n$ are quasi-isometric and hence by Corollary \ref{MainInvariance} their (reduced) continuous $L^\phi$-cohomologies coincide (and they coincide with their asymptotic $L^\phi$-cohomologies). Therefore,
$$H^k(\H^n,L^\phi\bigl(\H^n)\bigr)\cong\ell^\phi H^k(\H^n)\text{ and }\overline{H}^k\bigl(\H^n,L^\phi(\H^n)\bigr)\cong\ell^\phi \overline{H}^k(\H^n),\quad k\in\N.$$

In the case $k=1$, Theorem 1.2 in \cite{C} implies that $\ell^\phi H^1(\H^n)=\ell^\phi \overline{H}^1(\H^n)$ and they coincide with the Besov space $\mathcal{B}_\phi(\mathbb{S}^{n-1})/\R$, where
$$\mathcal{B}_\phi(\mathbb{S}^{n-1})=\{u:\mathbb{S}^{n-1}\to\R : \|u\|_{\mathcal{B}_\phi}<+\infty\}$$
and $\|\ \|_{\mathcal{B}_\phi}$ is the Luxembourg semi-norm associated to 
$$\rho_{\mathcal{B}_\phi}(u)=\int_{\mathbb{S}^{n-1}\times\mathbb{S}^{n-1}} \frac{\phi\bigl(u(x)-u(y)\bigr)}{|x-y|^{2n-2}}\,d\mathcal{H}(x)\,d\mathcal{H}(y).$$
Here $\mathcal{H}$ is the $(n-1)$-dimensional Hausdorff measure on the sphere, and, as before, $\R$ denotes the space of constant functions. 

If $\phi(t)=|t|^p$, then it is easy to see that the Lipschitz functions on an Ahlfors-regular metric space $Z$ are in $\mathcal{B}_\phi(Z)$ if $p$ is greater than the Hausdorff dimension of $Z$, which implies $\mathcal{B}_\phi(Z)/\R\neq 0$. Let us repeat the proof in our more general case in order to obtain some condition on $\phi$ for the non-vanishing of $\mathcal{B}_\phi(\mathbb{S}^{n-1})/\R$. 

Let $u:\mathbb{S}^{n-1}\to\R$ be a $L$-Lipschitz function. The sphere is $(n-1)$-Ahlfors regular, that is, there exists $C\geq 1$ such that for every $x\in\mathbb{S}^{n-1}$ and $r\in (0,2\pi)$,
\begin{equation}\label{AR}
C^{-1}r^{n-1}\leq\mathcal{H}\bigl(B(x,r)\bigr)\leq C r^{n-1}.    
\end{equation}
Here we assume that $\mathbb{S}^{n-1}$ has diameter $2\pi$. Define the $m$-annulus around a point $x\in\mathbb{S}^{n-1}$ as the subset $A_m(x)=B\bigl(x,\frac{2\pi}{m}\bigr)\setminus B\bigl(x,\frac{2\pi}{m+1}\bigr)$. Then, by \eqref{AR},
\begin{align*}
\rho_{\mathcal{B}_\phi}(u) &= \int_{\mathbb{S}^{n-1}}\sum_{m\geq 1} \left( \int_{A_m(x)} \frac{\phi\bigl(u(x)-u(y)\bigr)}{|x-y|^{2n}} \,d\mathcal{H}(y)\right)d\mathcal{H}(x)\\
&\leq \int_{\mathbb{S}^{n-1}}\sum_{m\geq 1} \mathcal{H}\bigl(A_m(x)\bigr) \phi(2\pi L/m)(m+1)^{2n-2} \,d\mathcal{H}(x)\\
& \leq \mathcal{H}(\mathbb{S}^{n-1}) \sum_{m\geq 1}\phi(2\pi L/m) \left(\frac{2\pi}{m}\right)^{n-1}(m+1)^{2n-2}.
\end{align*}
Thus, a sufficient condition to $u$ be in $\mathcal{B}_\phi(\mathbb{S}^{n-1})$ is 
\begin{equation}\label{A}
\sum_{m\geq 1} \phi(1/m)m^{n-1}<+\infty.    
\end{equation}

For any fixed point $x_0\in\mathbb{S}^{n-1}$, the map $u(x)=|x-x_0|$ is Lipschitz and non-constant. If $\phi$ satisfies \eqref{A}, then $\ell^\phi H^1(\H^n)=\ell^\phi \overline{H}^1(\H^n)\neq 0$. We conclude that
$$H^1\bigl(\H^n,L^\phi(\H^n)\bigr)=\overline{H}^1\bigl(\H^n,L^\phi(\H^n)\bigr)\neq 0.$$


A condition similar to \eqref{A} is given in \cite{KG} as a sufficient condition for the non-vanishing of the de Rham Orlicz cohomology of $\H^2$ in degree $1$.

Observe that the Haar measure on $\mathbb{H}^n$ is the Riemannian volume, hence it is locally doubling. Therefore, if $\phi$ and $S$ are as in Subsection \ref{SectionHarmonic}, then condition \eqref{A} guarantees the existence of non-constant $\phi$-harmonic functions.

An explicit computation of the simplicial Orlicz cohomology in degree $1$ of a wide family of Heintze groups for certain doubling Young functions can be found in \cite{C}.  

\section{Some observations on the non-doubling case}

In this section, we study an example that illustrates some differences between the doubling and non-doubling case.

Consider the free group $F_2$ generated by two generators $a$ and $b$. We equip $F_2$ with the counting measure and the word metric associated to the symmetric generator $S=\{a,a^{-1},b,b^{-1}\}$.

Let us focus on the asymptotic Orlicz cohomology of $F_2$ associated to a Young function $\phi$. Observe that for every $x,y\in F_2$ there exists $n\in\N$ and  $x_0,x_1,\ldots,x_n\in F_2$ (all of them different) such that $x_0=x$, $x_n=y$ and $|x_{i-1}-x_{i}|=1$; moreover, these points are unique. For $\omega\in Z_\phi^1(F_2)$, we have
$$\omega(x,y)=\sum_{i=1}^n \omega(x_{i-1},x_i).$$
We can conclude that every element in $Z_\phi^1(F_2)$ is determined by its values at the set $(F_2^2)_1$ of all the pairs of elements at distance $1$, which also implies that $\|\ \|_{\phi,1}$ is a norm in $Z_\phi^1(F_2)$.

Let $X$ be the Cayley graph of $F_2$ for the generator $S$, which is geometrically a tree. It is clear that the map
$$\Theta:\Bigl(Z_\phi^1(F_2),\|\ \|_{\phi,1}\Bigr)\to \Bigl(\ell^\phi\bigl(X^{(1)}\bigr),\|\ \|_\phi\Bigr),\ \Theta(\omega)\bigl([x,y]\bigr)=\omega(x,y),$$
is an isomorphism that preserves $d\ell^\phi(F_2)$. In particular, $\bigl(Z_\phi^1(F_2),\|\ \|_{\phi,1}\bigr)$ is a Banach space and has the topology given by the whole family of semi-norms $\|\ \|_{\phi,t}$. This shows that the (reduced) asymptotic $L^\phi$-cohomology of $F_2$ coincides with the (reduced) simplicial $\ell^\phi$-cohomology of $X$ even if $\phi$ is not doubling. 

Consider a function $\phi$ such that $\phi(t)=e^{-\frac{1}{t^2}}$ for $|t|$ small enough. It is easy to see that this formula defines a convex function on $\bigl(-\sqrt{2/3},\sqrt{2/3}\bigr)$. Since we are in the discrete case, the behaviour of the function for large $t$ is not important. However, $\phi$ can be extended to a non-doubling Young function on $\R$, for example by putting $\phi(t)=\alpha+\beta e^{|t|}$ when $|t|>\sqrt{2/3}$ for suitable $\alpha,\beta\in\R$.

First observe that $L^\phi H_{AS}^1(F_2)\neq 0$. For that we decompose $F_2$ into two disjoint subsets $A$ and $B$, where $A$ is the set of elements $x\in F_2$ that can be written as $x=a s_1 \cdots s_k$ with $s_1,\ldots,s_k\in S$ and $s_1^{-1}\neq a$. Take $\omega\in Z_\phi^1(F_2)$ defined in $(F_2^2)_1$ by $\omega(1,a)=\epsilon$ (and then $\omega(a,1)=-\epsilon$) and $\omega(x,y)=0$ if $\{x,y\}\neq \{1,a\}$, where $\epsilon>0$. If $f:F_2\to\R$ satisfies $df=\omega$, then $f$ must be constant on $A$ and $B$ but taking a different value on each subset, so it cannot be in $\ell^\phi(F_2)$. This implies that $\omega$ represents a non-zero class in cohomology.

Now check that if $\epsilon<\sqrt{2/3}$, then $\omega$ can be approximated by a sequence $\{\omega_n\}\subset Z_\phi^1(F_2)$ such that for every $n\in \N$ there exists $f_n\in\ell^\phi(F_2)$ with $df_n=\omega_n$. We again define $\omega_n$ in $(F_2^2)_1$ such that
\begin{itemize}
    \item $\omega_n(1,a)=\epsilon$
    \item $\omega_n(x,y)=\epsilon/n$ if $x,y\in A$ and $|x-a|=|y-a|-1\leq n-1$ 
    
    \item $\omega_n(x,y)=0$ if $x,y\in B$ or $|x-a|>n$ or $|y-a|>n$. 
\end{itemize}

It is clear that $\omega_n=df_n$ for $f_n$ with finite support. Moreover
$$\rho_{\phi,1}\left(\frac{\omega_n-\omega}{\alpha}\right)=2\cdot 3^n e^{-\left(\frac{\alpha n}{\epsilon}\right)^2},$$
which is equal to $1$ if
$$\alpha=\epsilon\sqrt{\frac{\log 2}{n^2}+\frac{\log 3}{n}}.$$
This shows that $\|\omega_n-\omega\|_\phi\to 0$ when $n\to +\infty$.

It is known (see~\cite[Proposition~2]{Ko13}) that in~the~doubling case, the~continuous $L^\phi$-cohomology in degree $1$ of~a~noncompact second countable locally compact group coincides with its reduced cohomology if and only if the~group is non-amenable. By Theorem \ref{MainEquivalence} the same holds for the asymptotic Orlicz cohomology. However, the above observation shows that the asymptotic $L^\phi$-cohomology in degree $1$ of the non-amenable group $F_2$ can be non-reduced (that is $L^\phi H_{AS}^1(F_2)\neq L^\phi \overline{H}_{AS}^1(F_2)$) if $\phi$ is non-doubling. 


Theorem 1.2 in \cite{C} implies that, if $\phi$ is doubling and $X$ is a Gromov-hyperbolic simplicial complex with bounded geometry such that its boundary $\partial X$ admits an Ahlfors-regular visual metric, then $\ell^\phi H^1(X)=\ell^\phi \overline{H}^1(X)$.  In our case, it is easy to see that the Cayley graph $X$ is Gromov-hyperbolic and its boundary has an Ahlfors-regular visual metric of dimension $\log 3$. Then, combining this result with Theorem \ref{MainEquivalence} and Proposition \ref{equivSG}, we obtain $L^\phi H_{AS}(X)=L^\phi \overline{H}_{AS}(X)$ if $\phi$ is doubling. Observe that the above computation shows that this is not true in the non-doubling case. 

In fact, we can see directly that if $\phi$ is as above, then $\ell^\phi H^1(X)\neq\ell^\phi \overline{H}^1(X)$, which shows that the doubling condition is necessary for the claim of  \cite[Theorem 1.2]{C}.

\bibliographystyle{plainurl}
\bibliography{references.bib}

\end{document}